\newlength{\dhatheight}
\newcommand{\bea}{\begin{eqnarray*}}
\newcommand{\eea}{\end{eqnarray*}}
\newcommand{\beq}{\begin{equation}}
\newcommand{\eeq}{\end{equation}}
\newcommand{\bfell}{\mbox{\boldmath $\ell$ \unboldmath} \hskip -0.05 true in}
\newcommand{\dd}{{\rm d}}
\newcommand{\ii}{\mathrm{i}}
 \newcommand{\D}{\mathrm{d}}
\newtheorem{theorem}{Theorem}[section]
\newtheorem{lemma}[theorem]{Lemma}
\theoremstyle{definition}
\newtheorem{proposition}[theorem]{Proposition}
\newtheorem{corollary}[theorem]{Corollary}
\theoremstyle{remark}
\newtheorem{remark}[theorem]{Remark}
\numberwithin{equation}{section}
\begin{document}

\title[Non-Abelian Fourier Analysis on $\mathbf{\Gamma}\backslash SE(d)$]{Non-Abelian Fourier Analysis on $\mathbf{\Gamma}\backslash SE(d)$}

\author[A. Ghaani Farashahi]{Arash Ghaani Farashahi$^{1,*}$}
\address{$^1$Department of Mechanical Engineering, University of Delaware, USA.}
\email{aghf@udel.edu}
\email{ghaanifarashahi@outlook.com}

\author[G.S. Chirikjian]{Gregory S. Chirikjian$^2$}
\address{$^2$Department of Mechanical Engineering, University of Delaware, USA.}
\email{gchirik@udel.edu}
\address{$^2$Department of Mechanical Engineering, National University of Singapore, Singapore.}
\email{mpegre@nus.edu.sg}

\subjclass[2020]{Primary 43A30, 43A85, Secondary 20H15, 43A10, 43A15, 43A20, 68T40, 74E15, 82D25.}

\date{\today}

\keywords{Special Euclidean group, non-Abelian Fourier series, right coset space,  homogeneous space,  group actions,  topological $G$-spaces,  discrete (crystallographic) subgroup.}
\thanks{$^*$Corresponding author}
\thanks{E-mail addresses: aghf@udel.edu (Arash Ghaani Farashahi) and gchirik@udel.edu (Gregory S. Chirikjian)}

\begin{abstract}
This paper presents a systematic study for the general theory of non-Abelian Fourier series of integrable functions on the homogeneous space $\mathbf{\Gamma}\backslash SE(d)$,  where $SE(d)$ is the special Euclidean group in dimension $d$, and $\mathbf{\mathbf{\Gamma}}$ is a discrete and co-compact subgroup of $SE(d)$.  Suppose that $\mu$ is the finite $SE(d)$-invariant measure on the right coset space $\mathbf{\Gamma}\backslash SE(d)$, normalized with respect to Weil's formula.  The analytic aspects of the proposed method works for any given orthonormal basis of the Hilbert function space $L^2(\mathbf{\Gamma}\backslash SE(d),\mu)$.  The paper is concluded with some convolution and Plancherel formulas.

\end{abstract}

\maketitle


\section{{\bf Introduction}}

Special Euclidean groups are classical algebraic tools for modeling of the group of rigid body motions on Euclidean spaces, and play significant roles in finite dimensional geometric analysis, quantum mechanics, and coherent state analysis \cite{Bern.Ty.SIAM,  Fuhr,  Kisil.book,  Pere.Coh}. Over the last decades, different applied and computational aspects of constructive approximation methods including Fourier type analysis on special Euclidean groups have obtained significant attention in application areas, see \cite{Ba.Gi, PI5, Du,  Les, Y1, Y2}.  Specifically,  the right coset spaces of orthogonal lattices in special Euclidean groups of different dimensions,  such as $\mathbb{Z}^2\backslash SE(2)$ and $\mathbb{Z}^3\backslash SE(3)$,  appears as the configuration space in different applications in computational science and engineering including computer vision, robotics, mathematical crystallography, computational biology, and material science \cite{PI3, PI.Kya.2000, PI7, Kya.PI.2000, shifman, etal.PI.2006}.

Let $d\ge2$ be an integer and $SE(d)$ be the group consists of direct Euclidean isometries of $\mathbb{R}^d$.   
Suppose that $\mathbf{\Gamma}$ is a discrete and co-compact subgroup of $SE(d)$ and $\mathbf{\Gamma}\backslash SE(d)$ is the right coset space of $\mathbf{\Gamma}$ in $SE(d)$. 
Invoking the algebraic structure of the group $SE(d)$ and constructive classifications of discrete co-compact subgroups of $SE(d)$, the subgroup $\mathbf{\Gamma}$ is not normal in $SE(d)$ and hence the right coset space $\mathbf{\Gamma}\backslash SE(d)$ is not a group in general.  In the case of $d=2$, fundamental properties of a unified approach for noncommutative Fourier series of continuous functions on $\mathbf{\Gamma}\backslash SE(2)$ generated by compactly supported functions on $SE(2)$ studied in \cite{AGHF.GSC.PAMQ}.  The discussed approach assumed some type of  Fourier  transform integrability condition on the dual space $(0,\infty)$ for generator functions on $SE(2)$. This paper generalizes these results in different directions.  To begin with,  we present the general theory for Euclidean group $SE(d)$ of arbitrary dimension.  In addition,  the presented generalization is valid for a larger class of integrable function and hence neither compactly supported assumption for generators nor integrability assumption for non-Abelian Fourier transform of generators on the dual space $(0,\infty)$ is required.  

This article contains 6 sections.  Section
2 is devoted to fixing notation and gives a
brief summary of non-Abelian Fourier analysis on 
the unimodular group $SE(d)$ and classical properties of functional analysis on 
the right coset space $\mathbf{\Gamma}\backslash SE(d)$.  In Section
3, we present a constructive approach for the general theory of non-Abelian Fourier analysis of functions on the right coset space $\mathbf{\Gamma}\backslash SE(d)$.
The presented approach implies a systematic study for harmonic analysis of non-Abelian Fourier series in the Hilbert function space $L^2(\mathbf{\Gamma}\backslash SE(d),\mu)$, where $\mu$ is the finite $SE(d)$-invariant measure on $\mathbf{\Gamma}\backslash SE(d)$,  normalized with respect to Weil's formula.  The next section discusses absolutely convergence of non-Abelian Fourier series 
and  associated reconstruction formulas of functions on the right coset space $\mathbf{\Gamma}\backslash SE(d)$,  including
the Fourier integral operator on $SE(d)$. 
Section 5 addresses convolution of functions on $SE(d)$ by functions on the right coset space $\mathbf{\Gamma}\backslash SE(d)$.  It is shown that  such a convolution defines a module action of the Banach algebra $L^1(SE(d))$ on the Banach function space $L^p(\mathbf{\Gamma}\backslash SE(d),\mu)$, for every $p\ge 1$.  Then analytic properties of non-Abelian Fourier series for convolution functions on the right coset space $\mathbf{\Gamma}\backslash SE(d)$ studied.  The paper is concluded by investigation of the presented results for the case of $d=2$ and $d=3$. 

\section{{\bf Preliminaries and Notation}}

Throughout this section we shall present preliminaries and the notation. 

\subsection{General notation} Let $\mathcal{H}$ be a Hilbert space. Then $\mathcal{U}(\mathcal{H})$ denotes the group of all unitary linear operators on $\mathcal{H}$.  If $T:\mathcal{H}\to\mathcal{H}$ is a bounded linear operator then $\|T\|_{\rm HS}$ (resp.  ${\rm tr}[T]$) denotes the Hilbert-Schmidt norm (resp.  trace) of $T$.  In addition,  $|T|$ is the positive square root of $T^*T$, where $T^*$ is the adjoint of $T$.  For more details on theory of operators and their properties we refer the reader to \cite{Mur}.

Suppose that $G$ is a locally compact group.  A unitary representation of $G$ is a homomorphism $\pi:G\to\mathcal{U}(\mathcal{H}_\pi)$ for some nonzero Hilbert space $\mathcal{H}_\pi$.  In this case,  $\mathcal{H}_\pi$ is called the representation space of $\pi$,  and dimension of $\mathcal{H}_\pi$ called the dimension or degree of $\pi$. The unitary representation $\pi:G\to\mathcal{U}(\mathcal{H}_\pi)$ is called continuous if the map $x\mapsto \pi(x)\zeta$ is continuous from $G$ to $\mathcal{H}_\pi$,  for every $\zeta\in\mathcal{H}_\pi$.  The unitary representation $\pi:G\to\mathcal{H}_\pi$ is called irreducible if it 
has no nontrivial invariant subspace.  

The set of equivalence classes of irreducible continuous unitary representations of a locally compact group $G$ is denoted by $\widehat{G}$ and called the (unitary) dual space of $G$.  We denote the equivalence class of an irreducible representation $\pi$ by $[\pi]$ or just $\pi$.  For more details on theory of unitary representations on groups and structure of their dual spaces we refer the reader to \cite{FollH} and references therein. 

\subsection{Harmonic analysis on $SE(d)$}
The special Euclidean group of $d$-dimensional space,  also called as Euclidean motion group,  $SE(d)$,  is the semi-direct product of $\mathbb{R}^d$
with the special orthogonal group $\mathbb{K}_d:=SO(d)$. That is,
\[
SE(d)=\mathbb{R}^d\rtimes SO(d)=SO(d)\ltimes\mathbb{R}^d.
\] 
We denote elements $g\in SE(d)$ as $g=(\mathbf{x},\mathbf{R})$ where $\mathbf{x}\in\mathbb{R}^d$ and $\mathbf{R}\in SO(d)$.  For every  $g=(\mathbf{x},\mathbf{R})$ and 
$g'=(\mathbf{x}',\mathbf{R}')\in SE(d)$ the group law is written as 
\[
g\circ g'=(\mathbf{x}+\mathbf{R}\mathbf{x}',\mathbf{RR}'),
\]
and 
\[
g^{-1}=(-\mathbf{R}^T\mathbf{x},\mathbf{R}^T),
\]
where $\mathbf{R}^T=\mathbf{R}^{-1}$.

Alternatively, the special Euclidean group $SE(d)$ can be faithfully  represented as the set of $(d+1)\times(d+1)$ homogeneous matrices of the form 
\begin{equation}
\mathbf{H}(g):=\left(\begin{array}{cc}
\mathbf{R} & \mathbf{x}  \\ 
\mathbf{0}^T & 1 \\ 
\end{array}\right)\hspace{1cm}{\rm for}\ \ g=(\mathbf{x},\mathbf{R}).
\end{equation}
Then the $SE(d)$ group law can be uniquely  identified via multiplication of matrices, that is 
\[
\mathbf{H}(g\circ g')=\mathbf{H}(g)\mathbf{H}(g'), \hspace{1cm}\ \mathbf{H}(g^{-1})=\mathbf{H}(g)^{-1}.
\]
The special Euclidean group $SE(d)$ is a unimodular group with the normalized Haar measure given by 
\[
\int_{SE(d)}f(g)\dd g:=\int_{SO(d)}\int_{\mathbb{R}^d}f(\mathbf{x},\mathbf{R})\dd\mathbf{x}\dd\mathbf{R},
\]
where $\dd\mathbf{x}$ is the normalized Lebesgue measure on $\mathbb{R}^d$ and $\dd\mathbf{R}$ is the probability Haar measure on $SO(d)$.

\subsection{Fourier analysis on $SE(d)$}
The special Euclidean group $SE(d)$ is a semi-direct product group with an Abelian normal factor.  Therefore,  classical methods for constructing unitary representations of semi-direct product groups could be applied,  known as the Mackey Machine,  e.g. Theorem 6.42 of \cite{FollH}.  

The set of all irreducible unitary representations, unitary dual,  of the Euclidean group $SE(d)$ can be classified as 
\[
\widehat{SE(d)}=\mathcal{R}_\#(SE(d))\cup\mathcal{R}_\infty(SE(d)),
\]
where $\mathcal{R}_\#(SE(d))$ (resp.  $\mathcal{R}_\infty(SE(d))$) denotes the set of all equivalence classes of finite (resp.  infinite) dimensional irreducible unitary representations of $SE(d)$, see \cite[Sec.  6.7, \S2]{FollH}.

The set $\mathcal{R}_\#(SE(d))$ can be characterized via the unitary dual $\widehat{\mathbb{K}_d}$ as follows
\[
\mathcal{R}_\#(SE(d))=\left\{\widetilde{\sigma}:\sigma\in\widehat{\mathbb{K}_d}\right\},
\]
where the lifted finite dimensional irreducible representation  $\widetilde{\sigma}:SE(d)\to\mathcal{U}(\mathcal{H}_\sigma)$ is given by 
\[
\widetilde{\sigma}(g):=\sigma(\mathbf{R}), \hspace{1cm}\forall \sigma\in\widehat{\mathbb{K}_d},
\]
for every $g=(\mathbf{x},\mathbf{R})\in SE(d)$.

The set $\mathcal{R}_\infty(SE(d))$ can be parametrized using $\mathcal{D}_d:=(0,\infty)\times\widehat{\mathbb{K}_{d-1}}$ via the following identification 
\[
\mathcal{R}_\infty(SE(d))=\left\{U_{p,\pi}:(p,\pi)\in\mathcal{D}_d\right\},
\]
where the infinite dimensional irreducible representation $U_{p,\pi}:SE(d)\to\mathcal{U}(\mathcal{K}_{\pi})$ is given by 
\[
U_{p,\pi}(g):=\mathrm{ind}_{H_d}^{SE(d)}(\xi_p\otimes\pi)(g),\hspace{1cm}\forall (p,\pi)\in\mathcal{D}_d,
\]
for every $g=(\mathbf{x},\mathbf{R})\in SE(d)$, where $H_d:=\mathbb{R}^d\rtimes \mathbb{K}_{d-1}$,  if $\mathbb{K}_{d-1}$ is considered as  the subgroup of $\mathbb{K}_d$ which leaves the vector $\mathrm{e}_1:=(\delta_{1,k})^T_{1\le k\le d}\in\mathbb{R}^d$ fixed,  and the character $\xi_p:\mathbb{R}^d\to\mathbb{T}$ is given by $\xi_p(\mathbf{x}):=e^{\ii p\langle\mathrm{e}_1,\mathbf{x}\rangle}=e^{\ii px_1}$,  for every $\mathbf{x}:=(x_1,\cdots,x_d)^T\in\mathbb{R}^d$.  
It should be mentioned that invoking construction of the induced representations,   if the representation $\pi\in\widehat{\mathbb{K}_{d-1}}$ is realised on the finite dimensional Hilbert space $\mathcal{H}_\pi$ then the representation $U_{p,\pi}$ can be canonically 
realised on an infinite dimensional Hilbert space $\mathcal{K}_{\pi}$ consists of $\mathcal{H}_\pi$-valued functions on $\mathbb{K}_d$,  for every $p\in(0,\infty)$.   For more details, on the structure of induced representation and properties of the representations $U_{p,\pi}$, we refer the readers to \cite{FollH, Ito, Klep.Lip, Vil}.

The group Fourier transform of each $f\in L^1(SE(d))$ at $(p,\pi)\in\mathcal{D}_d$ is given by 
\begin{equation}\label{hatp.pi}
\widehat{f}(p,\pi):=\int_{SE(d)}f(g)U_{p,\pi}(g^{-1})\dd g=\int_{SE(d)}f(g)U_{p,\pi}(g)^*\dd g.
\end{equation}
The corresponding convolution formula for functions $f_k\in L^1(SE(d))$ with $k\in\{1,2\}$ at $(p,\pi)\in\mathcal{D}_d$ is 
\begin{equation}\label{hatconvp.pi}
\widehat{(f_1\star f_2)}(p,\pi)=\widehat{f_2}(p,\pi)\widehat{f_1}(p,\pi).
\end{equation}

The Plancherel measure of $\widehat{SE(d)}$ is supported on the subset $\mathcal{R}_\infty(SE(d))$ and the Plancherel measure on $\mathcal{R}_\infty(SE(d))$ explicitly given by $C(d)p^{d-1}\dd p\otimes\kappa_\pi
$ where $C(d):=\frac{2}{2^{d/2}\Gamma(d/2)}$ ($\Gamma$ is the Gamma function) and $\kappa_\pi:=\dim\mathcal{H}_\pi$ for every $\pi\in\widehat{\mathbb{K}_{d-1}}$.  In addition,  the non-Abelian Fourier Plancherel/Parseval formula on the group $SE(d)$ is given by 
\begin{equation}\label{mainPL.SEd}
\int_{SE(d)}|f(g)|^2\dd g=C(d)\int_0^\infty\left(\sum_{\pi\in\widehat{\mathbb{K}_{d-1}}}\kappa_\pi\|\widehat{f}(p,\pi)\|_{\rm HS}^2\right)p^{d-1}\dd p,
\end{equation}
and the non-Abelian Fourier reconstruction formula on $SE(d)$ is 
\begin{equation}\label{main.SEd.rec}
f(g)=C(d)\int_0^\infty\sum_{\pi\in\widehat{\mathbb{K}_{d-1}}}\kappa_\pi\mathrm{tr}\left[\widehat{f}(p,\pi)U_{p,\pi}(g)\right]p^{d-1}\dd p,
\end{equation}
for $f\in L^1\cap L^2(SE(d))$ and $g\in SE(d)$, see \cite{Ku.Oka}.

Given $(p,\pi)\in\mathcal{D}_d$,  suppose that the irreducible unitary representation $U_{p,\pi}$ is realised on the infinite dimensional Hilbert space $\mathcal{K}_{\pi}$.  Let $U_p:SE(d)\to\mathcal{U}(\mathcal{K}_\infty)$ be the direct sum unitary representation of $SE(d)$ given by 
\[
U_p:=\bigoplus_{\pi\in\widehat{\mathbb{K}_{d-1}}}U_{p,\pi},
\]
which acts on the fixed Hilbert space $\mathcal{K}_\infty$,  that is the direct sum $\mathcal{K}_\infty:=\bigoplus_{\pi\in\widehat{\mathbb{K}_{d-1}}}\mathcal{K}_{\pi}$,  via 
\[
U_p(g)\zeta:=\{U_{p,\pi}(g)\zeta_\pi\}_{\pi\in\widehat{\mathbb{K}_{d-1}}},
\]
for every $\zeta=\{\zeta_\pi\}_{\pi\in\widehat{\mathbb{K}_{d-1}}}\in\mathcal{K}_\infty$.

The unified Fourier transform of each $f\in L^1(SE(d))$ at $p>0$ is given by 
\begin{equation}\label{hatp}
\widehat{f}(p):=\int_{SE(d)}f(g)U_p(g^{-1})\dd g=\int_{SE(d)}f(g)U_p(g)^*\dd g.
\end{equation}
In details,  if $f\in L^1(SE(d))$ and $p>0$ then $\widehat{f}(p)$ operates on $\mathcal{K}_\infty$ by 
\[
(\widehat{f}(p)\zeta)_\pi=\widehat{f}(p,\pi)\zeta_\pi,
\]
for every $\zeta=\{\zeta_\pi\}_{\pi\in\widehat{\mathbb{K}_{d-1}}}\in\mathcal{K}_\infty$.

The corresponding convolution formula for functions $f_k\in L^1(SE(d))$ with $k\in\{1,2\}$ at $p>0$ is 
\begin{equation}\label{hatconvp}
\widehat{(f_1\star f_2)}(p)=\widehat{f_2}(p)\widehat{f_1}(p).
\end{equation}

In addition,  the prescription of Plancherel formula (\ref{mainPL.SEd}) then takes the following unified form;
\begin{equation}\label{Radial.SEd}
\int_{SE(d)}|f(g)|^2\dd g=C(d)\int_0^\infty\|\widehat{f}(p)\|_{\rm HS}^2p^{d-1}\dd p,
\end{equation}
if $f\in L^1\cap L^2(SE(d))$. 

\subsection{Harmonic analysis on $\mathbf{\Gamma}\backslash SE(d)$}
Suppose that $\mathbf{\Gamma}$ is a discrete and co-compact subgroup of $SE(d)$ 
equipped with the counting measure as the bi-invariant Haar measure.  Then  the right coset space $\mathbf{\Gamma}\backslash SE(d):=\{\mathbf{\Gamma} g:g\in SE(d)\}$ is a compact transitive right $SE(d)$-space which the Lie group $SE(d)$ acts on it via the right translations.  

The foundations of abstract harmonic analysis on locally compact homogeneous spaces investigated in different directions,  see \cite{FollH,   AGHF.IJM, HR2, HR1, 50} and the list of references therein. 
Suppose $\mathcal{C}_c(SE(d))$ is the space of all continuous functions on $SE(d)$ with compact supports. The function space $\mathcal{C}(\mathbf{\Gamma}\backslash SE(d))$, that is the set of all continuous functions on $\mathbf{\Gamma}\backslash SE(d)$, consists of all functions 
$\widetilde{f}$, where 
$f\in\mathcal{C}_c(SE(d))$ and
\begin{equation}\label{5.1}
\widetilde{f}(\mathbf{\Gamma} g):=\sum_{\gamma\in\mathbf{\Gamma}}f(\gamma\circ g),
\end{equation}
for every $g\in SE(d)$,  see Proposition 2.48 of \cite{FollH}.

Let $\mu$ be a Radon measure on the right coset space $\mathbf{\Gamma}\backslash SE(d)$ and $h\in SE(d)$. The right translation $\mu_h$ of $\mu$ is defined by $\mu_h(E):=\mu(E\circ h)$, for all Borel subsets $E$ of $\mathbf{\Gamma}\backslash SE(d)$, where $E\circ h:=\{\mathbf{\Gamma} g\circ h:\mathbf{\Gamma} g\in E\}$. 
The measure $\mu$ is called $SE(d)$-invariant if $\mu_h=\mu$, for all $h\in SE(d)$.  Since $SE(d)$ is unimodular,  $\mathbf{\Gamma}$ is discrete and the right coset space $\mathbf{\Gamma}\backslash SE(d)$ is compact,  there exists a finite $SE(d)$-invariant measure $\mu$ on the right coset space $\mathbf{\Gamma}\backslash SE(d)$ satisfying the following Weil's formula  
\begin{equation}\label{TH.m}
\int_{\mathbf{\Gamma}\backslash SE(d)}\widetilde{f}(\mathbf{\Gamma} g)\dd\mu(\mathbf{\Gamma} g)=\int_{SE(d)}f(g)\dd g,
\end{equation}
for every $f\in L^1(SE(d))$,  see Theorem 2.49 of \cite{FollH}.  

\section{\bf Non-Abelian Fourier Series on $\mathbf{\Gamma}\backslash SE(d)$}

Throughout this section,  we study harmonic analysis of non-Abelian Fourier series for square integrable functions on the right coset space of discrete and co-compact subgroups in $SE(d)$.  We here assume that $\mathbf{\Gamma}$ is a discrete co-compact subgroup of $SE(d)$ and $\mu$ is the finite $SE(d)$-invariant measure on the right coset space $\mathbf{\Gamma}\backslash SE(d)$ normalized with respect to Weil's formula (\ref{TH.m}). 

The mathematical theory of Fourier analysis on coset spaces of compact groups discussed in \cite{AGHF.MMJ, AGHF.GGD1} and references therein.  In the case of canonical coset spaces of semi-direct product groups with Abelian normal factor, a concrete approach to the relative Fourier analysis studied in \cite{AGHF.JKMS}.  The later theories rely on assumptions about the group which are not as the case for $SE(d)$ and therefore can not be applied for the case of the right coset space $\mathbf{\Gamma}\backslash SE(d)$.

The following result gives a characterization for $L^2$-function space on the right coset space $\mathbf{\Gamma}\backslash SE(d)$. 

\begin{proposition}
{\it Let $f\in L^1(SE(d))$ with $\widetilde{|f|}\in L^2(\mathbf{\Gamma}\backslash SE(d),\mu)$. Then $f\in L^2(SE(d))$ and  
\[
\|f\|_{L^2(SE(d))}\le\|\widetilde{|f|}\|_{L^2(\mathbf{\Gamma}\backslash SE(d),\mu)}.
\] 
}\end{proposition}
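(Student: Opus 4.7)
The plan is to control $\|f\|_{L^2(SE(d))}^2$ by pulling the integral of $|f|^2$ back down to the coset space via Weil's formula, and then comparing the ``squared sum'' $\widetilde{|f|^2}$ pointwise with the ``sum squared'' $\bigl(\widetilde{|f|}\bigr)^2$. The elementary inequality driving everything is
\[
\sum_{\gamma \in \mathbf{\Gamma}} a_\gamma^2 \;\le\; \Bigl(\sum_{\gamma \in \mathbf{\Gamma}} a_\gamma\Bigr)^2 \qquad (a_\gamma \ge 0),
\]
since all cross terms on the right are nonnegative.

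First I would apply this pointwise with $a_\gamma := |f(\gamma \circ g)|$ to obtain
\[
\widetilde{|f|^2}(\mathbf{\Gamma} g) \;=\; \sum_{\gamma \in \mathbf{\Gamma}} |f(\gamma \circ g)|^2 \;\le\; \Bigl(\sum_{\gamma \in \mathbf{\Gamma}} |f(\gamma \circ g)|\Bigr)^2 \;=\; \widetilde{|f|}(\mathbf{\Gamma} g)^2,
\]
which is a well-defined inequality in $[0,\infty]$ at every coset $\mathbf{\Gamma} g$. Next I would invoke Weil's formula (\ref{TH.m}) in its monotone-convergence form valid for arbitrary nonnegative measurable integrands — applied to the nonnegative function $|f|^2$ on $SE(d)$, so that no a priori $L^1$ assumption on $|f|^2$ is needed. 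Integrating the pointwise bound above against $\mu$ gives
\[
\int_{SE(d)} |f(g)|^2 \, \dd g \;=\; \int_{\mathbf{\Gamma}\backslash SE(d)} \widetilde{|f|^2}(\mathbf{\Gamma} g) \, \dd\mu(\mathbf{\Gamma} g) \;\le\; \int_{\mathbf{\Gamma}\backslash SE(d)} \widetilde{|f|}(\mathbf{\Gamma} g)^2 \, \dd\mu(\mathbf{\Gamma} g) \;=\; \|\widetilde{|f|}\|_{L^2(\mathbf{\Gamma}\backslash SE(d),\mu)}^2.
\]
Since the right-hand side is finite by assumption, this simultaneously establishes $f \in L^2(SE(d))$ and the claimed norm inequality.

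There is essentially no serious obstacle here; the only subtle point is justifying Weil's formula for $|f|^2$, which a priori is only known to be a nonnegative measurable function rather than an element of $L^1(SE(d))$. This is handled by the standard extension of (\ref{TH.m}) to all nonnegative measurable functions via monotone convergence on truncations $\min(|f|^2, n)\chi_{K_n}$ for a compact exhaustion $K_n \uparrow SE(d)$, so no additional hypothesis is required and the argument is essentially two lines beyond the pointwise inequality.
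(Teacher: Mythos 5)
Your proposal is correct and follows essentially the same route as the paper: the pointwise inequality $\sum_{\gamma}|f(\gamma\circ g)|^2\le\bigl(\sum_{\gamma}|f(\gamma\circ g)|\bigr)^2$ followed by Weil's formula applied to $|f|^2$. In fact you are slightly more careful than the paper, which applies Weil's formula to $|f|^2$ without comment, whereas you explicitly justify its extension to nonnegative measurable functions by monotone convergence.
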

\begin{proof}
Let $g\in SE(d)$.  Then  
\[
\sum_{\gamma\in\mathbf{\Gamma}}|f(\gamma\circ g)|^2\le\left(\sum_{\gamma\in\mathbf{\Gamma}}|f(\gamma\circ g)|\right)^2.
\]
So, using Weil's formula, we obtain   
\begin{align*}
\|f\|_{L^2(SE(d))}^2&=\int_{\mathbf{\Gamma}\backslash SE(d)}\sum_{\gamma\in\mathbf{\Gamma}}|f(\gamma\circ g)|^2d\mu(\mathbf{\Gamma} g)\le\int_{\mathbf{\Gamma}\backslash SE(d)}\left(\sum_{\gamma\in\mathbf{\Gamma}}|f(\gamma\circ g)|\right)^2d\mu(\mathbf{\Gamma} g)
\\&=\int_{\mathbf{\Gamma}\backslash SE(d)}\left(\sum_{\gamma\in\mathbf{\Gamma}}|f|(\gamma\circ g)\right)^2d\mu(\mathbf{\Gamma} g)
=\int_{\mathbf{\Gamma}\backslash SE(d)}\widetilde{|f|}(\mathbf{\Gamma} g)^2d\mu(\mathbf{\Gamma} g)=\|\widetilde{|f|}\|^2_{L^2(\mathbf{\Gamma}\backslash SE(d),\mu)},
\end{align*}
which implies that $f\in L^2(SE(d))$. 
\end{proof}

\begin{corollary}
{\it Let $f\in L^1(SE(d))$ be non-negative and $\widetilde{f}\in L^2(\mathbf{\Gamma}\backslash SE(d),\mu)$. Then $f\in L^2(SE(d))$ and  
\[
\|f\|_{L^2(SE(d))}\le\|\widetilde{f}\|_{L^2(\mathbf{\Gamma}\backslash SE(d),\mu)}.
\] 
}\end{corollary}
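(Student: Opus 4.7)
The plan is to deduce this corollary as an immediate specialization of the preceding proposition, using only the hypothesis that $f$ is non-negative to identify $|f|$ with $f$ pointwise.

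First I would observe that since $f\ge 0$ everywhere on $SE(d)$, we have $|f(g)|=f(g)$ for every $g\in SE(d)$, and therefore $|f|=f$ as elements of $L^1(SE(d))$. Consequently, the fiberwise sum defining the averaging operator satisfies
\[
\widetilde{|f|}(\mathbf{\Gamma} g)=\sum_{\gamma\in\mathbf{\Gamma}}|f|(\gamma\circ g)=\sum_{\gamma\in\mathbf{\Gamma}}f(\gamma\circ g)=\widetilde{f}(\mathbf{\Gamma} g),
\]
so $\widetilde{|f|}=\widetilde{f}$ as functions on $\mathbf{\Gamma}\backslash SE(d)$. In particular, the hypothesis $\widetilde{f}\in L^2(\mathbf{\Gamma}\backslash SE(d),\mu)$ is equivalent to $\widetilde{|f|}\in L^2(\mathbf{\Gamma}\backslash SE(d),\mu)$, and
\[
\|\widetilde{|f|}\|_{L^2(\mathbf{\Gamma}\backslash SE(d),\mu)}=\|\widetilde{f}\|_{L^2(\mathbf{\Gamma}\backslash SE(d),\mu)}.
\]

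Next I would apply the previous proposition to $f$, which gives $f\in L^2(SE(d))$ together with the bound $\|f\|_{L^2(SE(d))}\le\|\widetilde{|f|}\|_{L^2(\mathbf{\Gamma}\backslash SE(d),\mu)}$. Substituting the identity above yields the desired inequality. There is essentially no obstacle here; the only thing to verify carefully is that $f\ge 0$ really does force the pointwise identity $|f|=f$ (as opposed to merely an almost-everywhere statement), but since the preceding proposition is formulated for $L^1$ representatives and the averaging $\widetilde{\,\cdot\,}$ is well-defined on equivalence classes via Weil's formula, the argument proceeds without further subtlety.
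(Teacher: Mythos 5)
Your argument is correct and is exactly the intended route: the paper states this corollary without proof as an immediate specialization of the preceding proposition, since $f\ge 0$ gives $|f|=f$ and hence $\widetilde{|f|}=\widetilde{f}$. Nothing further is needed.
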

\begin{remark} Let $f\in L^1(SE(d))$ with $\widetilde{|f|}\in L^2(\mathbf{\Gamma}\backslash SE(d),\mu)$. Then $\widetilde{f}\in L^2(\mathbf{\Gamma}\backslash SE(d),\mu)$ with 
\[
\|\widetilde{f}\|_{L^2(\mathbf{\Gamma}\backslash SE(d),\mu)}\le\|\widetilde{|f|}\|_{L^2(\mathbf{\Gamma}\backslash SE(d),\mu)}.
\] 
\end{remark}

Assume that $K$ is a subset of $SE(d)$ and $\varphi:\mathbf{\Gamma}\backslash SE(d)\to\mathbb{C}$ satisfies  
\begin{equation}\label{QKint}
\int_K|\varphi(\mathbf{\Gamma} g)|\dd g<\infty.
\end{equation}
For every $p>0$,   let $Q^\varphi_K(p):\mathcal{K}_\infty\to\mathcal{K}_\infty$ be the bounded linear operator defined by 
\begin{equation}\label{QK}
Q_K^\varphi(p):=\int_K\varphi(\mathbf{\Gamma} g)U_p(g)\dd g.
\end{equation}
The operator-valued integral (\ref{QK}) is considered in the weak sense.
In details,  for every $\zeta\in\mathcal{K}_\infty$,  the linear operator $Q_K^\varphi(p)\zeta$ is defined by specifying the corresponding $\mathcal{K}_\infty$-inner product with an arbitrary $\zeta'\in \mathcal{K}_\infty$ and the latter is given by the following explicit formula
\[
\langle Q_K^\varphi(p)\zeta,\zeta'\rangle:=\int_K\varphi(\mathbf{\Gamma} g)\langle U_p(g)\zeta,\zeta'\rangle\dd g.
\]
Since $g\mapsto\langle U_p(g)\zeta,\zeta'\rangle$ is a bounded continuous function on $SE(d)$, the integral on the right is the ordinary integral of a function on $K$. 
In addition,  $\langle Q_K^\varphi(p)\zeta,\zeta'\rangle$ depends linearly on $\zeta$ and antilinearly
on $\zeta'$ and
\begin{align*}
\left|\langle  Q_K^\varphi(p)\zeta,\zeta'\rangle\right|\le\|\zeta\|_{\mathcal{K}_\infty}\|\zeta'\|_{\mathcal{K}_\infty} \left(\int_K|\varphi(\mathbf{\Gamma} g)|\dd g\right).
\end{align*}
Therefore, $Q_K^\varphi(p)$ defines a bounded
linear operator on the Hilbert space $\mathcal{K}_\infty$ with the operator norm 
\[
\left\|Q_K^\varphi(p)\right\|\le\int_K|\varphi(\mathbf{\Gamma} g)|\dd g.
\]
\begin{remark} Let $p>0$.  Some canonical possibilities for well-defined 
$Q_K^\varphi(p)$ are as follow;\\
(i) Let $K$ be a compact subset of $SE(d)$.  Then each continuous $\varphi:\mathbf{\Gamma}\backslash SE(d)\to\mathbb{C}$ satisfies the integrability condition (\ref{QKint}).  So,  every $p>0$, defines the bounded linear operator $Q_K^\varphi(p)$ on $\mathcal{K}_\infty$ with the operator norm 
\[
\left\|Q_K^\varphi(p)\right\|\le \int_K|\varphi(\mathbf{\Gamma} g)|\dd g.
\]
(ii) Suppose that $\Omega$ is a fundamental domain of $\mathbf{\Gamma}$ in $SE(d)$ and $\gamma\in\mathbf{\Gamma}$.
Then every $\varphi\in L^1(\mathbf{\Gamma}\backslash SE(d),\mu)$ satisfies the integrability condition (\ref{QKint}) with 
\[
\int_{\gamma\Omega}|\varphi(\mathbf{\Gamma} g)|\dd g=\|\varphi\|_{L^1(\mathbf{\Gamma}\backslash SE(d),\mu)}<\infty.
\]
So,  every $p>0$ defines 
the bounded linear operator $Q_{\gamma\Omega}^\varphi(p)$ on $\mathcal{K}_\infty$ with the operator norm 
\[
\|Q_{\gamma\Omega}^\varphi(p)\|\le \|\varphi\|_{L^1(\mathbf{\Gamma}\backslash SE(d),\mu)}. 
\]
\end{remark}

\begin{lemma}\label{12.Lem}
Suppose $\mathbf{\Gamma}$ is a discrete co-compact subgroup of $SE(d)$ and $\Omega$ is a fundamental domain of $\mathbf{\Gamma}$ in $SE(d)$. 
Assume that $\gamma\in\mathbf{\Gamma}$ and $\varphi\in L^2(\mathbf{\Gamma}\backslash SE(d),\mu)$.  Let $E_{\gamma\Omega}$ be the characteristic function of $\gamma\Omega$ and $\varphi_\gamma(g):=E_{\gamma\Omega}(g)\varphi(\mathbf{\Gamma} g)$ for every $g\in SE(d)$.  Then $\varphi_\gamma\in L^1\cap L^2(SE(d))$.
\end{lemma}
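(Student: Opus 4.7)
The guiding observation is that $\varphi_\gamma$ is, by construction, supported on the single translate $\gamma\Omega$ of the fundamental domain, and integrals against the Haar measure over $\gamma\Omega$ can be converted into integrals over the coset space $\mathbf{\Gamma}\backslash SE(d)$ by two elementary moves: left translation by $\gamma^{-1}$ (using unimodularity), and the defining property of a fundamental domain.

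The plan is to handle $L^2$ first and then derive $L^1$ from finiteness of $\mu$. For the $L^2$-bound, I would compute
\[
\|\varphi_\gamma\|_{L^2(SE(d))}^2=\int_{SE(d)}E_{\gamma\Omega}(g)|\varphi(\mathbf{\Gamma} g)|^2\,\dd g=\int_{\gamma\Omega}|\varphi(\mathbf{\Gamma} g)|^2\,\dd g,
\]
then apply the substitution $g=\gamma\circ h$. Since $SE(d)$ is unimodular the Haar measure is left-invariant, so $\dd g=\dd h$; and because $\gamma\in\mathbf{\Gamma}$ we have $\mathbf{\Gamma} g=\mathbf{\Gamma} h$. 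This reduces the integral to $\int_{\Omega}|\varphi(\mathbf{\Gamma} h)|^2\,\dd h$. To match this against $\|\varphi\|_{L^2(\mathbf{\Gamma}\backslash SE(d),\mu)}^2$, I would apply Weil's formula (\ref{TH.m}) to the function $f(h):=E_{\Omega}(h)|\varphi(\mathbf{\Gamma} h)|^2$, noting that $\widetilde{f}(\mathbf{\Gamma} h)=|\varphi(\mathbf{\Gamma} h)|^2\sum_{\gamma'\in\mathbf{\Gamma}}E_{\Omega}(\gamma'\circ h)=|\varphi(\mathbf{\Gamma} h)|^2$ because each $SE(d)$-orbit meets $\Omega$ exactly once. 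This yields $\|\varphi_\gamma\|_{L^2(SE(d))}=\|\varphi\|_{L^2(\mathbf{\Gamma}\backslash SE(d),\mu)}<\infty$.

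For the $L^1$-bound, the identical change of variables and Weil-formula argument, applied to $|\varphi|$ rather than $|\varphi|^2$, gives $\|\varphi_\gamma\|_{L^1(SE(d))}=\|\varphi\|_{L^1(\mathbf{\Gamma}\backslash SE(d),\mu)}$; since $\mu$ is a finite measure, the Cauchy--Schwarz inequality bounds this by $\mu(\mathbf{\Gamma}\backslash SE(d))^{1/2}\|\varphi\|_{L^2(\mathbf{\Gamma}\backslash SE(d),\mu)}$, which is finite.

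The only delicate point is the intermediate identity $\int_{\Omega}|\varphi(\mathbf{\Gamma} h)|^p\,\dd h=\int_{\mathbf{\Gamma}\backslash SE(d)}|\varphi(\mathbf{\Gamma} h)|^p\,\dd\mu(\mathbf{\Gamma} h)$ for $p=1,2$; this is really the statement that a fundamental domain is a measurable transversal on which the quotient map $SE(d)\to\mathbf{\Gamma}\backslash SE(d)$ is measure-preserving, and it is precisely what Weil's formula supplies once one checks the trivial fact that the $\mathbf{\Gamma}$-orbit sum of $E_\Omega$ is identically $1$. Everything else is bookkeeping.
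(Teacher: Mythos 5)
Your proof is correct and follows essentially the same route as the paper's: translate the integral over $\gamma\Omega$ back to $\Omega$ using invariance of the Haar measure and the $\mathbf{\Gamma}$-invariance of $g\mapsto\varphi(\mathbf{\Gamma} g)$, identify $\int_\Omega|\varphi(\mathbf{\Gamma} g)|^p\,\dd g$ with $\|\varphi\|_{L^p(\mathbf{\Gamma}\backslash SE(d),\mu)}^p$ via Weil's formula applied to $E_\Omega\cdot|\varphi(\mathbf{\Gamma}\,\cdot\,)|^p$, and get the $L^1$ bound from the $L^2$ one using finiteness of $\mu$. If anything you are a bit more explicit than the paper, which asserts the $\Omega$-identity without the Weil-formula check and omits the factor $\mu(\mathbf{\Gamma}\backslash SE(d))^{1/2}$ in the $L^1$--$L^2$ comparison; the only blemishes are your harmless slip of writing ``$SE(d)$-orbit'' where you mean the $\mathbf{\Gamma}$-orbit $\mathbf{\Gamma} h$, and invoking unimodularity where plain left-invariance of the Haar measure suffices.
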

\begin{proof}
Since $\mathbf{\Gamma}\backslash SE(d)$ is compact,  and so it has finite volume,  we conclude that  $L^2(\mathbf{\Gamma}\backslash SE(d),\mu)\subseteq L^2(\mathbf{\Gamma}\backslash SE(d),\mu) $ and $\|\varphi\|_{L^1(\mathbf{\Gamma}\backslash SE(d),\mu)}\le \|\varphi\|_{L^2(\mathbf{\Gamma}\backslash SE(d),\mu)}$.  Let $p\in\{1,2\}$. Then 
\begin{align*}
\|\varphi_\gamma\|_{L^p(SE(d))}^p&=\int_{SE(d)}|\varphi_\gamma(g)|^p\dd g=\int_{SE(d)}E_{\gamma\Omega}(g)|\varphi(\mathbf{\Gamma} g)|^p\dd g
\\&=\int_{SE(d)}E_{\Omega}(\gamma^{-1}g)|\varphi(\mathbf{\Gamma} g)|^p\dd g
=\int_{SE(d)}E_{\Omega}(g)|\varphi(\mathbf{\Gamma} g)|^p\dd g
\\&=\int_{\Omega}|\varphi(\mathbf{\Gamma} g)|^p\dd g=\|\varphi\|_{L^p(\mathbf{\Gamma}\backslash SE(d),\mu)}^p\le\|\varphi\|_{L^2(\mathbf{\Gamma}\backslash SE(d),\mu)}^p,
\end{align*}
implying that $\varphi_\gamma\in L^1\cap L^2(SE(d))$.
\end{proof}

The following theorem presents an explicit formulation for inner-product of functions on the right coset space $\mathbf{\Gamma}\backslash SE(d)$ in terms of the trace of the non-Abelian Fourier integral operators on $SE(d)$.

\begin{theorem}\label{Main}
Suppose $\mathbf{\Gamma}$ is a discrete co-compact subgroup of $SE(d)$ and $\Omega$ is a fundamental domain of $\mathbf{\Gamma}$ in $SE(d)$.
Let $f\in L^1\cap L^2(SE(d))$ such that $\widetilde{f}\in L^2(\mathbf{\Gamma}\backslash SE(d),\mu)$ and $\varphi\in L^2(\mathbf{\Gamma}\backslash SE(d),\mu)$. Then 
\[
\langle\widetilde{f},\varphi\rangle=C(d)\sum_{\gamma\in\mathbf{\Gamma}}\int_0^\infty\mathrm{tr}\left[\widehat{f}(p)Q_{\gamma\Omega}^{\overline{\varphi}}(p)\right]p^{d-1}\dd p,
\]
where 
\[
Q_{\gamma\Omega}^{\overline{\varphi}}(p):=\int_{\gamma\Omega}\overline{\varphi(\mathbf{\Gamma} g)}U_p(g)\dd g,
\]
for every $\gamma\in\mathbf{\Gamma}$ and $p>0$.
\end{theorem}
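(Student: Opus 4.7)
The plan is to \emph{unfold} the inner product on the quotient, chop the resulting integral across the fundamental domain $\Omega$, and then apply the non-Abelian Plancherel identity \eqref{Radial.SEd} piece by piece. First I would set $h(g):=f(g)\overline{\varphi(\mathbf{\Gamma} g)}$ and note that the $\mathbf{\Gamma}$-invariance of the map $g\mapsto\overline{\varphi(\mathbf{\Gamma} g)}$ gives $\widetilde{h}(\mathbf{\Gamma} g)=\widetilde{f}(\mathbf{\Gamma} g)\overline{\varphi(\mathbf{\Gamma} g)}$, which belongs to $L^1(\mathbf{\Gamma}\backslash SE(d),\mu)$ by Cauchy--Schwarz on the finite-measure quotient. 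Weil's formula \eqref{TH.m} then yields
\[
\langle\widetilde{f},\varphi\rangle=\int_{SE(d)}f(g)\overline{\varphi(\mathbf{\Gamma} g)}\,\dd g.
\]

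Next, I would use the essentially disjoint tiling $SE(d)=\bigsqcup_{\gamma\in\mathbf{\Gamma}}\gamma\Omega$ to decompose this integral as $\sum_{\gamma\in\mathbf{\Gamma}}\int_{\gamma\Omega}f(g)\overline{\varphi(\mathbf{\Gamma} g)}\,\dd g$ and, for each $\gamma$, recognize the summand as the $L^2(SE(d))$-inner product $\langle f,\eta_\gamma\rangle$, where $\eta_\gamma(g):=E_{\gamma\Omega}(g)\varphi(\mathbf{\Gamma} g)$. By Lemma~\ref{12.Lem} applied to $\varphi$, we have $\eta_\gamma\in L^1\cap L^2(SE(d))$, so the Parseval version of \eqref{Radial.SEd} (obtained by polarization of the Plancherel identity) gives
\[
\langle f,\eta_\gamma\rangle_{L^2(SE(d))}=C(d)\int_0^\infty\mathrm{tr}\bigl[\widehat{f}(p)\widehat{\eta_\gamma}(p)^*\bigr]p^{d-1}\dd p.
\]
A direct manipulation of \eqref{hatp} yields
\[
\widehat{\eta_\gamma}(p)^*=\left(\int_{\gamma\Omega}\varphi(\mathbf{\Gamma} g)U_p(g)^*\,\dd g\right)^{\!*}=\int_{\gamma\Omega}\overline{\varphi(\mathbf{\Gamma} g)}U_p(g)\,\dd g=Q_{\gamma\Omega}^{\overline{\varphi}}(p),
\]
so the $\gamma$-th summand equals $C(d)\int_0^\infty\mathrm{tr}[\widehat{f}(p)Q_{\gamma\Omega}^{\overline{\varphi}}(p)]p^{d-1}\dd p$, and summing over $\gamma\in\mathbf{\Gamma}$ delivers the identity in the statement.

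The main obstacle is justifying both the tiling decomposition of $\int_{SE(d)}$ into a $\mathbf{\Gamma}$-sum and the ensuing interchange of that sum with the spectral integration in $p$. Both are controlled once one has the absolute integrability $\int_{SE(d)}|f(g)\varphi(\mathbf{\Gamma} g)|\,\dd g<\infty$; the natural route is to apply Weil's formula to $|f|$ and then Cauchy--Schwarz on the quotient, which uses a mild strengthening (or reinterpretation) of the hypothesis on $\widetilde{f}$ so that $\widetilde{|f|}\in L^2(\mathbf{\Gamma}\backslash SE(d),\mu)$. A secondary, purely technical, check is that $\widehat{f}(p)$ and $Q_{\gamma\Omega}^{\overline{\varphi}}(p)=\widehat{\eta_\gamma}(p)^*$ are both Hilbert--Schmidt for almost every $p>0$, so that the traces $\mathrm{tr}[\widehat{f}(p)Q_{\gamma\Omega}^{\overline{\varphi}}(p)]$ are meaningful; this is immediate from \eqref{Radial.SEd} applied to $f$ and to $\eta_\gamma$.
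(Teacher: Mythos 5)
Your proposal is correct and follows essentially the same route as the paper's proof: Weil's formula for $g\mapsto f(g)\overline{\varphi(\mathbf{\Gamma} g)}$, the decomposition over the tiles $\gamma\Omega$, Lemma~\ref{12.Lem} applied to your $\eta_\gamma$ (the paper's $\varphi_\gamma$), the polarized Plancherel identity \eqref{Radial.SEd}, and the identification $\widehat{\eta_\gamma}(p)^*=Q_{\gamma\Omega}^{\overline{\varphi}}(p)$. The integrability caveat you flag (needing $\widetilde{|f|}\in L^2(\mathbf{\Gamma}\backslash SE(d),\mu)$ to control $\int_{SE(d)}|f(g)\varphi(\mathbf{\Gamma} g)|\,\dd g$) is likewise left implicit in the paper, so it does not set your argument apart.
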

\begin{proof}
Using Weil's formula (\ref{TH.m}),  we have 
\begin{equation}\label{M0}
\langle\widetilde{f},\varphi\rangle
=\int_{\mathbf{\Gamma}\backslash SE(d)}\widetilde{f}(\mathbf{\Gamma}g)\overline{\varphi(\mathbf{\Gamma}g)}\dd\mu(\mathbf{\Gamma}g)=\int_{SE(d)}f(g)\overline{\varphi(\mathbf{\Gamma} g)}\dd g.
\end{equation}
Since $\Omega$ is a fundamental domain of $\mathbf{\Gamma}$ in $SE(d)$, using (\ref{M0}), we get 
\begin{equation}\label{M1}
\langle\widetilde{f},\varphi\rangle=\sum_{\gamma\in\mathbf{\Gamma}}\int_{\Omega}f(\gamma\omega)\overline{\varphi(\mathbf{\Gamma}\omega)}\dd\omega.
\end{equation}
Assume that $\gamma\in\mathbf{\Gamma}$ is given. Let $\varphi_\gamma:SE(d)\to\mathbb{C}$ be given by $\varphi_\gamma(g):=E_{\gamma\Omega}(g)\varphi(\mathbf{\Gamma} g)$ for every $g\in SE(d)$.
Invoking Lemma \ref{12.Lem}, we have $\varphi_\gamma\in L^1\cap L^2(SE(d))$. Applying Plancherel formula on $SE(d)$,  formula (\ref{Radial.SEd}),  and the polarization identity we obtain  
\begin{equation}\label{M2}
\int_{\Omega}f(\gamma\omega)\overline{\varphi(\mathbf{\Gamma}\omega)}\dd\omega=C(d)\int_0^\infty\mathrm{tr}\left[\widehat{f}(p)\widehat{\varphi_\gamma}(p)^*\right]p^{d-1}\dd p.
\end{equation}
Indeed, we have 
\begin{align*}
\int_{\Omega}f(\gamma\omega)\overline{\varphi(\mathbf{\Gamma}\omega)}\dd\omega&=\int_{\gamma\Omega}f(g)\overline{\varphi(\mathbf{\Gamma} g)}\dd g=\int_{SE(d)}E_{\gamma\Omega}(g)f(g)\overline{\varphi(\mathbf{\Gamma} g)}\dd g\\&=\int_{SE(d)}f(g)\overline{\varphi_\gamma(g)}\dd g=C(d)\int_0^\infty\mathrm{tr}\left[\widehat{f}(p)\widehat{\varphi_\gamma}(p)^*\right]p^{d-1}\dd p.
\end{align*}
Suppose $p\in(0,\infty)$. Then 
\begin{align*}
\widehat{\varphi_\gamma}(p)&=\int_{SE(d)}\varphi_\gamma(g)U_p(g)^*\dd g
=\int_{SE(d)}E_{\gamma\Omega}(g)\varphi(\mathbf{\Gamma} g)U_p(g)^*\dd g
=\int_{\gamma\Omega}\varphi(\mathbf{\Gamma} g)U_p(g)^*\dd g,
\end{align*}
implying that 
\[
\widehat{\varphi_\gamma}(p)^*=\left(\int_{\gamma\Omega}\varphi(\mathbf{\Gamma} g)U_p(g)^*\dd g\right)^*=Q_{\gamma\Omega}^{\overline{\varphi}}(p).
\]
So, using (\ref{M2}), we achieve
\[
\int_{\Omega}f(\gamma\omega)\overline{\varphi(\mathbf{\Gamma} \omega)}\dd\omega=C(d)\int_0^\infty\mathrm{tr}\left[\widehat{f}(p)Q_{\gamma\Omega}^{\overline{\varphi}}(p)\right]p^{d-1}\dd p.
\]
Therefore,  applying (\ref{M1}), we conclude that  
\begin{align*}
\langle\widetilde{f},\varphi\rangle
&=\sum_{\gamma\in\mathbf{\Gamma}}\int_{\Omega}f(\gamma\omega)\overline{\varphi(\mathbf{\Gamma}\omega)}\dd\omega
=C(d)\sum_{\gamma\in\mathbf{\Gamma}}\int_0^\infty\mathrm{tr}\left[\widehat{f}(p)Q_{\gamma\Omega}^{\overline{\varphi}}(p)\right]p^{d-1}\dd p.
\end{align*}
\end{proof}

We then conclude the following formulas for compactly supported functions on $SE(d)$. 

\begin{proposition}\label{main.K}
{\it Suppose $\mathbf{\Gamma}$ is a discrete co-compact subgroup of $SE(d)$ and $\Omega$ is a fundamental domain of $\mathbf{\Gamma}$ in $SE(d)$.  Let $K$ be a compact subset of $SE(d)$ and $\mathbb{J}=\{\gamma_1,\cdots,\gamma_{n}\}\subset\mathbf{\Gamma}$ be a finite subset of $\mathbf{\Gamma}$ such that $K=\bigcup_{k=1}^{n}\gamma_k\Omega$.  Assume that $f:SE(d)\to\mathbb{C}$ is a continuous function supported on $K$ and $\varphi\in L^2(\mathbf{\Gamma}\backslash SE(d),\mu)$.  Then
 \[
\langle\widetilde{f},\varphi\rangle=C(d)\int_0^\infty\mathrm{tr}\left[\widehat{f}(p)Q_K^{\overline{\varphi}}(p)\right]p^{d-1}\dd p.
\]   
}\end{proposition}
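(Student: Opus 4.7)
The plan is to derive the statement as a corollary of Theorem \ref{Main}, by exploiting the compact support of $f$ to collapse the $\mathbf{\Gamma}$-indexed sum to the finite index set $\mathbb{J}$.

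First I would verify the hypotheses of Theorem \ref{Main}. Since $f$ is continuous with support in the compact set $K$, it lies in $L^1 \cap L^2(SE(d))$. Moreover, the defining sum $\widetilde{f}(\mathbf{\Gamma} g) = \sum_{\gamma \in \mathbf{\Gamma}} f(\gamma \circ g)$ is locally finite because $\mathbf{\Gamma}$ is discrete and $f$ has compact support, so $\widetilde{f}$ is continuous on the compact space $\mathbf{\Gamma}\backslash SE(d)$ and in particular lies in $L^2(\mathbf{\Gamma}\backslash SE(d),\mu)$. Theorem \ref{Main} then yields
\[
\langle \widetilde{f}, \varphi \rangle = C(d) \sum_{\gamma \in \mathbf{\Gamma}} \int_0^\infty \mathrm{tr}\left[\widehat{f}(p)\, Q_{\gamma\Omega}^{\overline{\varphi}}(p)\right] p^{d-1} \, \dd p.
\]

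The next step is to show that only the indices $\gamma \in \mathbb{J}$ contribute to the sum. One of the intermediate identities established in the proof of Theorem \ref{Main}, namely
\[
C(d) \int_0^\infty \mathrm{tr}\left[\widehat{f}(p)\, Q_{\gamma\Omega}^{\overline{\varphi}}(p)\right] p^{d-1} \, \dd p = \int_{\gamma\Omega} f(g)\, \overline{\varphi(\mathbf{\Gamma} g)} \, \dd g,
\]
makes this transparent: for $\gamma \in \mathbf{\Gamma} \setminus \mathbb{J}$ the translate $\gamma\Omega$ is essentially disjoint from $K = \bigcup_{k=1}^n \gamma_k \Omega$ (distinct $\mathbf{\Gamma}$-translates of a fundamental domain overlap only on a null set), and $f$ vanishes outside $K$, so the right-hand side is zero. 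Consequently the sum collapses to a finite one over $k = 1, \ldots, n$.

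The final step is pure linearity. From the defining integral \eqref{QK} and the essentially disjoint decomposition $K = \bigcup_{k=1}^n \gamma_k \Omega$, one has $\sum_{k=1}^n Q_{\gamma_k\Omega}^{\overline{\varphi}}(p) = Q_K^{\overline{\varphi}}(p)$ pointwise in $p$; linearity of the trace then gives $\sum_{k=1}^n \mathrm{tr}[\widehat{f}(p)\, Q_{\gamma_k\Omega}^{\overline{\varphi}}(p)] = \mathrm{tr}[\widehat{f}(p)\, Q_K^{\overline{\varphi}}(p)]$, and this finite sum may be interchanged freely with the $p$-integral, producing the claimed identity. The only mildly delicate point is the vanishing of the terms with $\gamma \notin \mathbb{J}$, but since that follows immediately from the support hypothesis on $f$, no real obstacle arises beyond careful bookkeeping of the cover of $K$.
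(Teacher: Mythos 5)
Your proposal is correct and follows essentially the same route as the paper: apply Theorem \ref{Main}, observe that the terms with $\gamma\notin\mathbb{J}$ vanish because $f$ is supported on $K=\bigcup_{k=1}^n\gamma_k\Omega$, and then combine the finite sum using $\sum_{k=1}^n Q_{\gamma_k\Omega}^{\overline{\varphi}}(p)=Q_K^{\overline{\varphi}}(p)$ and linearity of the trace. Your extra remarks (verifying the hypotheses of Theorem \ref{Main} for a continuous compactly supported $f$, and justifying the vanishing via the intermediate identity $C(d)\int_0^\infty\mathrm{tr}[\widehat{f}(p)Q_{\gamma\Omega}^{\overline{\varphi}}(p)]p^{d-1}\,\dd p=\int_{\gamma\Omega}f(g)\overline{\varphi(\mathbf{\Gamma}g)}\,\dd g$) only make explicit what the paper leaves implicit.
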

\begin{proof}
Let $\gamma\in\mathbf{\Gamma}$ such that $\gamma\not\in\mathbb{J}$. Then 
\[
\int_0^\infty\mathrm{tr}\left[\widehat{f}(p)Q_{\gamma\Omega}^{\overline{\varphi}}(p)\right]p^{d-1}\dd p=0.
\]
So,  using Theorem \ref{Main},  we achieve 
\begin{align*}
\langle\widetilde{f},\varphi\rangle
&=C(d)\sum_{\gamma\in\mathbf{\Gamma}}\int_0^\infty\mathrm{tr}\left[\widehat{f}(p)Q_{\gamma\Omega}^{\overline{\varphi}}(p)\right]p^{d-1}\dd p
\\&=C(d)\sum_{k=1}^n\int_0^\infty\mathrm{tr}\left[\widehat{f}(p)Q_{\gamma_k\Omega}^{\overline{\varphi}}(p)\right]p^{d-1}\dd p
\\&=C(d)\int_0^\infty\sum_{k=1}^n\mathrm{tr}\left[\widehat{f}(p)Q_{\gamma_k\Omega}^{\overline{\varphi}}(p)\right]p^{d-1}\dd p
\\&=C(d)\int_0^\infty\mathrm{tr}\left[\widehat{f}(p)\left(\sum_{k=1}^nQ_{\gamma_k\Omega}^{\overline{\varphi}}(p)\right)\right]p^{d-1}\dd p
=C(d)\int_0^\infty\mathrm{tr}\left[\widehat{f}(p)Q_K^{\overline{\varphi}}(p)\right]p^{d-1}\dd p.
\end{align*}
\end{proof}

\begin{corollary}\label{main.O}
{\it Suppose $\mathbf{\Gamma}$ is a discrete co-compact subgroup of $SE(d)$ and $\Omega$ is a fundamental domain of $\mathbf{\Gamma}$ in $SE(d)$.
Let $f:SE(d)\to\mathbb{C}$ be a continuous function supported in $\Omega$ and $\varphi\in L^2(\mathbf{\Gamma}\backslash SE(d),\mu)$. Then 
\[
\langle\widetilde{f},\varphi\rangle=C(d)\int_0^\infty\mathrm{tr}\left[\widehat{f}(p)Q_{\Omega}^{\overline{\varphi}}(p)\right]p^{d-1}\dd p.
\]
}\end{corollary}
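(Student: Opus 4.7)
The plan is to derive Corollary \ref{main.O} as an immediate specialization of Proposition \ref{main.K}. Specifically, I would take the compact set $K$ in Proposition \ref{main.K} to be $\Omega$ itself (replacing it by its closure if one insists on topological compactness; since $\mathbf{\Gamma}\backslash SE(d)$ is compact, a compact fundamental domain can always be chosen, so this is harmless), and the finite indexing set to be the singleton $\mathbb{J}=\{e\}$ consisting of the identity element of $\mathbf{\Gamma}$. Then $K=e\Omega=\Omega$, and $Q_K^{\overline{\varphi}}(p)=Q_{\Omega}^{\overline{\varphi}}(p)$, so Proposition \ref{main.K} delivers the desired identity verbatim.

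Alternatively, and without relying on the compactness of $\Omega$, one can read off the corollary directly from Theorem \ref{Main}. Since $\Omega$ is a fundamental domain of $\mathbf{\Gamma}$, the translates $\{\gamma\Omega:\gamma\in\mathbf{\Gamma}\}$ are pairwise disjoint (up to null sets), so for $\omega\in\Omega$ one has $\gamma\omega\in\Omega$ only when $\gamma=e$. Hence if $f$ is supported in $\Omega$, then $f(\gamma\omega)=0$ for all $\omega\in\Omega$ and all $\gamma\neq e$, which is exactly the condition that makes the $\gamma$-term
\[
\int_\Omega f(\gamma\omega)\overline{\varphi(\mathbf{\Gamma}\omega)}\,\dd\omega
=C(d)\int_0^\infty \mathrm{tr}\!\left[\widehat{f}(p)Q_{\gamma\Omega}^{\overline{\varphi}}(p)\right]p^{d-1}\,\dd p
\]
(from the proof of Theorem \ref{Main}) vanish for every $\gamma\neq e$. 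Only the term $\gamma=e$ survives in the sum over $\mathbf{\Gamma}$, yielding the claimed formula.

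There is essentially no obstacle in this proof: the main content is already encapsulated in Proposition \ref{main.K} (or Theorem \ref{Main}), and what remains is the trivial observation that a function supported in $\Omega$ only picks up a contribution from the identity coset representative. The only minor point worth checking is the applicability hypothesis: $f$ being continuous and supported in $\Omega$ ensures $f\in L^1\cap L^2(SE(d))$ (since $\Omega$ has finite Haar measure equal to $\mu(\mathbf{\Gamma}\backslash SE(d))<\infty$ by Weil's formula) and $\widetilde{f}\in L^2(\mathbf{\Gamma}\backslash SE(d),\mu)$ (since $\widetilde{f}(\mathbf{\Gamma}\omega)=f(\omega)$ for $\omega\in\Omega$ and $f$ is bounded on the compact set $K=\mathrm{supp}(f)\subseteq\Omega$), so Theorem \ref{Main} indeed applies.
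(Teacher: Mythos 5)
Your proposal is correct and matches the paper's intent: the paper offers no separate proof of this corollary, treating it exactly as you do in your first paragraph, namely as the special case of Proposition \ref{main.K} with $K=\Omega$ (a compact fundamental domain) and $\mathbb{J}=\{e\}$. Your alternative derivation directly from Theorem \ref{Main}, where only the $\gamma=e$ term survives because $f(\gamma\omega)$ vanishes a.e.\ on $\Omega$ for $\gamma\neq e$, is in substance the same argument the paper uses to prove Proposition \ref{main.K} itself, so there is no genuinely different route here.
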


\begin{remark}  Proposition \ref{main.K} and Corollary \ref{main.O} appeared in \cite{AGHF.GSC.PAMQ},  for the case $d=2$ and $f$ is compactly supported satisfying the additional condition $\widehat{f}\in\mathcal{H}^1(0,\infty)$.
\end{remark}

We then conclude  the following explicit formulation for inner-product of functions on the right coset space $\mathbf{\Gamma}\backslash SE(d)$ in terms of the matrix elements of the non-Abelian Fourier integral operator on $SE(d)$.

\begin{theorem}\label{Main.Mat}
Let $f\in L^1(SE(d))$ such that $\widetilde{f}\in L^2(\mathbf{\Gamma}\backslash SE(d),\mu)$.  Suppose $\varphi\in L^2(\mathbf{\Gamma}\backslash SE(d),\mu)$. Then 
\[
\langle\widetilde{f},\varphi\rangle=C(d)\sum_{\gamma\in\mathbf{\Gamma}}\sum_{n=-\infty}^\infty\sum_{m=-\infty}^\infty\int_0^\infty \widehat{f}(p)_{m,n}Q_{\gamma\Omega}^{\overline{\varphi}}(p)_{n,m}p^{d-1}\dd p,
\]
where for every $\gamma\in\mathbf{\Gamma}$ we have 
\[
\sum_{n=-\infty}^\infty\sum_{m=-\infty}^\infty\int_0^\infty|\widehat{f}(p)_{m,n}||Q_{\gamma\Omega}^{\overline{\varphi}}(p)_{n,m}|p^{d-1}\dd p<\infty.
\]
\end{theorem}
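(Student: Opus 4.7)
The plan is to start from Theorem~\ref{Main} and expand the trace in its conclusion as a double sum of matrix coefficients in an orthonormal basis of $\mathcal{K}_\infty$, then push the matrix-index sums outside of the $p$-integral via a Fubini/Tonelli argument. The $\gamma$-summation stays in the order inherited from Theorem~\ref{Main}, so the only new work is rearranging the trace and the $p$-integral for each individual $\gamma$.

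First I fix an orthonormal basis $\{e_n\}_{n\in\mathbb{Z}}$ of $\mathcal{K}_\infty$ (obtained by concatenating orthonormal bases of the summands $\mathcal{K}_\pi$), and write $A_{m,n}:=\langle Ae_n,e_m\rangle$ for the matrix coefficients of a bounded operator $A$ on $\mathcal{K}_\infty$. The elementary identity
\[
\mathrm{tr}[AB]=\sum_{m}\sum_{n}A_{m,n}B_{n,m}
\]
is valid whenever the right-hand side converges absolutely; I apply it with $A=\widehat{f}(p)$ and $B=Q_{\gamma\Omega}^{\overline{\varphi}}(p)$. Substitution into the formula of Theorem~\ref{Main} gives the claimed identity, conditional on an absolute-convergence check that permits interchanging $\sum_{n,m}$ with $\int_0^\infty(\cdot)\,p^{d-1}\,\mathrm{d}p$.

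The core technical step is the estimate, for each fixed $\gamma\in\mathbf{\Gamma}$,
\[
\sum_{n,m}\int_0^\infty|\widehat{f}(p)_{m,n}|\,|Q_{\gamma\Omega}^{\overline{\varphi}}(p)_{n,m}|\,p^{d-1}\,\mathrm{d}p<\infty .
\]
By Tonelli, I first bring the $\sum_{n,m}$ inside the integral. A Cauchy--Schwarz inequality on the $\ell^2$-indexed matrix entries yields the pointwise bound $\sum_{n,m}|\widehat{f}(p)_{m,n}|\,|Q_{\gamma\Omega}^{\overline{\varphi}}(p)_{n,m}|\le\|\widehat{f}(p)\|_{\mathrm{HS}}\,\|Q_{\gamma\Omega}^{\overline{\varphi}}(p)\|_{\mathrm{HS}}$. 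From the proof of Theorem~\ref{Main} I already have $Q_{\gamma\Omega}^{\overline{\varphi}}(p)=\widehat{\varphi_\gamma}(p)^{*}$ with $\varphi_\gamma(g):=E_{\gamma\Omega}(g)\varphi(\mathbf{\Gamma}g)$, and Lemma~\ref{12.Lem} gives $\varphi_\gamma\in L^1\cap L^2(SE(d))$ with $\|\varphi_\gamma\|_{L^2(SE(d))}\le\|\varphi\|_{L^2(\mathbf{\Gamma}\backslash SE(d),\mu)}$. A second Cauchy--Schwarz on the weight $p^{d-1}\,\mathrm{d}p$, combined with the Plancherel identity \eqref{Radial.SEd} applied separately to $f$ and to $\varphi_\gamma$, produces a finite bound of the shape $C(d)^{-1}\|f\|_{L^2(SE(d))}\,\|\varphi\|_{L^2(\mathbf{\Gamma}\backslash SE(d),\mu)}$.

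The main obstacle I expect is exactly this convergence step, and in particular the (tacit) need for $f\in L^2(SE(d))$, which is the hypothesis that makes both Theorem~\ref{Main} and the Plancherel formula applicable to $\widehat{f}$. Once absolute convergence for each $\gamma$ is secured, Fubini legitimately interchanges $\sum_{n,m}$ with $\int_0^\infty(\cdot)\,p^{d-1}\,\mathrm{d}p$, and the resulting double integrand coincides with $\mathrm{tr}[\widehat{f}(p)Q_{\gamma\Omega}^{\overline{\varphi}}(p)]$ for every $p>0$; inserting this expansion into Theorem~\ref{Main} delivers the stated identity. The outer $\gamma$-sum need not converge absolutely on its own, but it converges in the sense already established in Theorem~\ref{Main} from the $L^1$-integrability of $f\,\overline{\varphi\circ q}$ on $SE(d)$, so no additional control over $\sum_\gamma$ is required.
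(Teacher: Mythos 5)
Your proposal is correct and follows essentially the same route as the paper: expand $\mathrm{tr}\bigl[\widehat{f}(p)Q_{\gamma\Omega}^{\overline{\varphi}}(p)\bigr]$ into matrix coefficients, establish absolute convergence for each fixed $\gamma$ by Cauchy--Schwarz together with the Plancherel identity applied to $f$ and $\varphi_\gamma$ (via Lemma~\ref{12.Lem}), interchange $\sum_{m,n}$ with $\int_0^\infty(\cdot)\,p^{d-1}\,\dd p$ by Fubini, and insert the expansion into Theorem~\ref{Main}. The only cosmetic difference is that you apply Cauchy--Schwarz first in the indices $(m,n)$ (pointwise in $p$, via Hilbert--Schmidt norms) and then in $p$, while the paper does it in the opposite order; your remark that $f\in L^2(SE(d))$ is tacitly needed matches the paper's actual use of Theorem~\ref{Main}.
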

\begin{proof}
Suppose that $\gamma\in\mathbf{\Gamma}$.  Then 
\begin{align*}
\int_0^\infty\mathrm{tr}\left[\widehat{f}(p)Q_{\gamma\Omega}^{\overline{\varphi}}(p)\right]p^{d-1}\dd p
&=\int_0^\infty\sum_{n=-\infty}^\infty\langle\widehat{f}(p)Q_{\gamma\Omega}^{\overline{\varphi}}(p)\mathbf{e}_n,\mathbf{e}_n\rangle p^{d-1}\dd p
\\&=\int_0^\infty\sum_{n=-\infty}^\infty\langle Q_{\gamma\Omega}^{\overline{\varphi}}(p)\mathbf{e}_n,\widehat{f}(p)^*\mathbf{e}_n\rangle p^{d-1}\dd p
\\&=\int_0^\infty\sum_{n=-\infty}^\infty\sum_{m=-\infty}^\infty\langle Q_{\gamma\Omega}^{\overline{\varphi}}(p)\mathbf{e}_n,\mathbf{e}_m\rangle\langle\mathbf{e}_m,\widehat{f}(p)^*\mathbf{e}_n\rangle p^{d-1}\dd p
\\&=\int_0^\infty\sum_{n=-\infty}^\infty\sum_{m=-\infty}^\infty \widehat{f}(p)_{m,n}Q_{\gamma\Omega}^{\overline{\varphi}}(p)_{n,m}p^{d-1}\dd p.
\end{align*}
Using Cauchy–Schwarz inequality,  we have 
\[
\int_0^\infty|\widehat{f}(p)_{m,n}||Q_{\gamma\Omega}^{\overline{\varphi}}(p)_{n,m}|p^{d-1}\dd p\le\left(\int_0^\infty|\widehat{f}(p)_{m,n}|^2p^{d-1}\dd p\right)^{1/2}\left(\int_0^\infty|Q_{\gamma\Omega}^{\overline{\varphi}}(p)_{n,m}|^2p^{d-1}\dd p\right)^{1/2},
\]
for every $m,n\in\mathbb{Z}$.  In addition,  we get 
\begin{align*}
&\sum_{m=-\infty}^\infty\sum_{n=-\infty}^\infty\int_0^\infty|\widehat{f}(p)_{m,n}||Q_{\gamma\Omega}^{\overline{\varphi}}(p)_{n,m}|p^{d-1}\dd p
\\&\le\sum_{m=-\infty}^\infty\sum_{n=-\infty}^\infty\left(\int_0^\infty|\widehat{f}(p)_{m,n}|^2p^{d-1}\dd p\right)^{1/2}\left(\int_0^\infty|Q_{\gamma\Omega}^{\overline{\varphi}}(p)_{n,m}|^2p^{d-1}\dd p\right)^{1/2}
\\&\le\sum_{m=-\infty}^\infty\left(\sum_{n=-\infty}^\infty\int_0^\infty|\widehat{f}(p)_{m,n}|^2p^{d-1}\dd p\right)^{1/2}\left(\sum_{n=-\infty}^\infty\int_0^\infty|Q_{\gamma\Omega}^{\overline{\varphi}}(p)_{n,m}|^2p^{d-1}\dd p\right)^{1/2}
\\&\le\left(\sum_{m=-\infty}^\infty\sum_{n=-\infty}^\infty\int_0^\infty|\widehat{f}(p)_{m,n}|^2p^{d-1}\dd p\right)^{1/2}\left(\sum_{m=-\infty}^\infty\sum_{n=-\infty}^\infty\int_0^\infty|Q_{\gamma\Omega}^{\overline{\varphi}}(p)_{n,m}|^2p^{d-1}\dd p\right)^{1/2}<\infty,
\end{align*}
implying that 
\[
\sum_{m=-\infty}^\infty\sum_{n=-\infty}^\infty\int_0^\infty|\widehat{f}(p)_{m,n}||Q_{\gamma\Omega}^{\overline{\varphi}}(p)_{n,m}|p^{d-1}\dd p<\infty.
\]
Then Fubini's Theorem guarantees that  
\begin{align*}
\int_0^\infty\mathrm{tr}\left[\widehat{f}(p)Q_{\gamma\Omega}^{\overline{\varphi}}(p)\right]p^{d-1}\dd p
&=\int_0^\infty\left(\sum_{n=-\infty}^\infty\sum_{m=-\infty}^\infty \widehat{f}(p)_{m,n}Q_{\gamma\Omega}^{\overline{\varphi}}(p)_{n,m}\right)p^{d-1}\dd p\\&=\sum_{n=-\infty}^\infty\sum_{m=-\infty}^\infty\left(\int_0^\infty \widehat{f}(p)_{m,n}Q_{\gamma\Omega}^{\overline{\varphi}}(p)_{n,m}p^{d-1}\dd p\right).
\end{align*}
Therefore,  using Theorem \ref{Main},  we obtain  
\begin{align*}
\langle\widetilde{f},\varphi\rangle=C(d)\sum_{\gamma\in\mathbf{\Gamma}}\sum_{n=-\infty}^\infty\sum_{m=-\infty}^\infty\int_0^\infty \widehat{f}(p)_{m,n}Q_{\gamma\Omega}^{\overline{\varphi}}(p)_{n,m}p^{d-1}\dd p.
\end{align*}
\end{proof}

\begin{corollary}
{\it Suppose $\mathbf{\Gamma}$ is a discrete co-compact subgroup of $SE(d)$ and $\Omega$ is a fundamental domain of $\mathbf{\Gamma}$ in $SE(d)$.  Let $K$ be a compact subset of $SE(d)$ and $\mathbb{J}=\{\gamma_1,\cdots,\gamma_{n}\}\subset\mathbf{\Gamma}$ be a finite subset of $\mathbf{\Gamma}$ such that $K=\bigcup_{k=1}^{n}\gamma_k\Omega$.  Assume that $f:SE(d)\to\mathbb{C}$ is a continuous function supported on $K$ and $\varphi\in L^2(\mathbf{\Gamma}\backslash SE(d),\mu)$. 
Then 
\[
\langle\widetilde{f},\varphi\rangle=C(d)\sum_{n=-\infty}^\infty\sum_{m=-\infty}^\infty\int_0^\infty \widehat{f}(p)_{m,n}Q_{K}^{\overline{\varphi}}(p)_{n,m}p^{d-1}\dd p.
\]
}\end{corollary}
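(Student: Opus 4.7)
The plan is to derive this corollary by combining Theorem \ref{Main.Mat} with the support hypothesis on $f$, paralleling how Proposition \ref{main.K} was obtained from Theorem \ref{Main}. First I would verify the hypotheses of Theorem \ref{Main.Mat}: since $f$ is continuous and supported on the compact set $K$, one has $f\in L^1\cap L^2(SE(d))$; and since $\mathbf{\Gamma}\backslash SE(d)$ is compact with finite $\mu$-measure and $\widetilde{f}$ is continuous, we have $\widetilde{f}\in L^2(\mathbf{\Gamma}\backslash SE(d),\mu)$. Thus Theorem \ref{Main.Mat} applies and yields
\[
\langle\widetilde{f},\varphi\rangle=C(d)\sum_{\gamma\in\mathbf{\Gamma}}\sum_{n=-\infty}^\infty\sum_{m=-\infty}^\infty\int_0^\infty\widehat{f}(p)_{m,n}Q_{\gamma\Omega}^{\overline{\varphi}}(p)_{n,m}p^{d-1}\dd p,
\]
with absolute convergence of each $(n,m)$-double sum guaranteed by the Cauchy--Schwarz estimate recorded in the proof of Theorem \ref{Main.Mat}.

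Next I would argue that only the indices $\gamma\in\mathbb{J}$ contribute. Since the $\mathbf{\Gamma}$-translates of $\Omega$ tile $SE(d)$ (up to a null boundary), for $\gamma\in\mathbf{\Gamma}\setminus\mathbb{J}$ the set $\gamma\Omega$ is disjoint from $K$, so $f\equiv 0$ on $\gamma\Omega$. Hence $\int_\Omega f(\gamma\omega)\overline{\varphi(\mathbf{\Gamma}\omega)}\dd\omega=0$, and the chain of identities used inside the proofs of Theorem \ref{Main} and Theorem \ref{Main.Mat} (rewriting this integral first as $C(d)\int_0^\infty\mathrm{tr}[\widehat{f}(p)Q_{\gamma\Omega}^{\overline{\varphi}}(p)]p^{d-1}\dd p$ and then, by Fubini, as the matrix-element double sum) forces
\[
\sum_{n=-\infty}^\infty\sum_{m=-\infty}^\infty\int_0^\infty\widehat{f}(p)_{m,n}Q_{\gamma\Omega}^{\overline{\varphi}}(p)_{n,m}p^{d-1}\dd p=0\qquad(\gamma\notin\mathbb{J}).
\]
Thus the outer $\gamma$-sum collapses to the finite sum over $\mathbb{J}=\{\gamma_1,\dots,\gamma_n\}$.

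Finally, the disjoint decomposition $K=\bigsqcup_{k=1}^n\gamma_k\Omega$ (up to a null set) together with the linearity of the weak operator-valued integral defining $Q$ yields $Q_K^{\overline{\varphi}}(p)=\sum_{k=1}^n Q_{\gamma_k\Omega}^{\overline{\varphi}}(p)$, and this identity passes immediately to the matrix entries $Q_K^{\overline{\varphi}}(p)_{n,m}=\sum_{k=1}^n Q_{\gamma_k\Omega}^{\overline{\varphi}}(p)_{n,m}$. Interchanging the \emph{finite} $k$-summation with the absolutely convergent $(n,m)$-sum and the $p$-integral then consolidates the contributions into the claimed single matrix-element expression involving $Q_K^{\overline{\varphi}}(p)$. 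The only obstacle is bookkeeping around these interchanges, and it is entirely resolved by the absolute convergence already established in Theorem \ref{Main.Mat}; no new estimate is required.
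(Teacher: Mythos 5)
Your proposal is correct and follows essentially the route the paper intends for this corollary (which it states without proof): apply Theorem \ref{Main.Mat}, kill the terms with $\gamma\notin\mathbb{J}$ exactly as in the proof of Proposition \ref{main.K}, and use the additivity $Q_K^{\overline{\varphi}}(p)=\sum_{k=1}^n Q_{\gamma_k\Omega}^{\overline{\varphi}}(p)$ passed to matrix entries, with the interchanges justified by the absolute convergence already recorded in Theorem \ref{Main.Mat}. No gaps.
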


We finish this section by discussing the following constructive non-Abelian  Fourier type approximation for $L^2$-functions on the right coset space $\mathbf{\Gamma}\backslash SE(d)$ with respect to a given orthonormal basis.

\begin{theorem}\label{MainRecTh0.L2.K}
{\it Suppose that $\mathbf{\Gamma}$ is a discrete and co-compact  subgroup of $SE(d)$ and $\mathcal{E}(\mathbf{\Gamma}):=(\psi_\ell)_{\ell\in\mathbb{I}}$ is an orthonormal basis for the Hilbert function space $L^2(\mathbf{\Gamma}\backslash SE(d),\mu)$.  
Let $f\in L^1\cap L^2(SE(d))$ such that $\widetilde{f}\in L^2(\mathbf{\Gamma}\backslash SE(d),\mu)$. Then  
\begin{equation}\label{FourierInvQ0.L1L2.K}
\widetilde{f}=C(d)\sum_{\ell\in\mathbb{I}}\left(\sum_{\gamma\in\mathbf{\Gamma}}\int_0^\infty\mathrm{tr}\left[\widehat{f}(p)Q_{\gamma\Omega}^\ell(p)\right]p^{d-1}\dd p\right)\psi_\ell,
\end{equation}
where 
\[
Q^\ell_{\gamma\Omega}(p):=\int_{\gamma\Omega}\overline{\psi_\ell(\mathbf{\Gamma} g)} U_p(g)\dd g,
\]
for $p>0$ and $\ell\in\mathbb{I}$.
}\end{theorem}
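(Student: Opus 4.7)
The plan is to combine the standard Hilbert-space Fourier expansion of $\widetilde{f}$ with respect to the orthonormal basis $\mathcal{E}(\mathbf{\Gamma})$ with the explicit formula for the inner product $\langle \widetilde{f}, \psi_\ell \rangle$ already established in Theorem \ref{Main}. Concretely, my first step would be to observe that the hypothesis $\widetilde{f} \in L^2(\mathbf{\Gamma}\backslash SE(d),\mu)$ together with the fact that $(\psi_\ell)_{\ell \in \mathbb{I}}$ is an orthonormal basis immediately gives the abstract Fourier series
\begin{equation*}
\widetilde{f} = \sum_{\ell \in \mathbb{I}} \langle \widetilde{f}, \psi_\ell \rangle \, \psi_\ell,
\end{equation*}
where convergence is understood in the $L^2(\mathbf{\Gamma}\backslash SE(d),\mu)$ norm. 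This is just the standard Parseval/Fourier decomposition in a Hilbert space and does not require any group-theoretic input beyond the fact that the basis spans.

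The second step is to identify each Fourier coefficient $\langle \widetilde{f}, \psi_\ell\rangle$ using Theorem \ref{Main} with $\varphi = \psi_\ell$. The hypotheses of that theorem are met: $f \in L^1 \cap L^2(SE(d))$ with $\widetilde{f} \in L^2(\mathbf{\Gamma}\backslash SE(d),\mu)$ on one side and $\psi_\ell \in L^2(\mathbf{\Gamma}\backslash SE(d),\mu)$ on the other. Therefore
\begin{equation*}
\langle \widetilde{f}, \psi_\ell \rangle = C(d) \sum_{\gamma \in \mathbf{\Gamma}} \int_0^\infty \mathrm{tr}\!\left[\widehat{f}(p) Q_{\gamma\Omega}^{\overline{\psi_\ell}}(p)\right] p^{d-1}\, \mathrm{d}p.
\end{equation*}

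The third step is the simple bookkeeping observation that the operator $Q^\ell_{\gamma\Omega}(p)$ introduced in the statement coincides, by definition, with the operator $Q_{\gamma\Omega}^{\overline{\psi_\ell}}(p)$ appearing in Theorem \ref{Main}; both are weak integrals of $\overline{\psi_\ell(\mathbf{\Gamma} g)}\, U_p(g)$ over $\gamma\Omega$. Substituting the formula for $\langle\widetilde{f},\psi_\ell\rangle$ into the Hilbert-space expansion of $\widetilde{f}$ then yields exactly the identity \eqref{FourierInvQ0.L1L2.K}, with the outer series converging in $L^2(\mathbf{\Gamma}\backslash SE(d),\mu)$.

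The proof is essentially a direct assembly, so there is no serious obstacle; the only delicate point to be careful about is the \emph{mode} of convergence of the outer sum $\sum_{\ell \in \mathbb{I}}$ in \eqref{FourierInvQ0.L1L2.K}. I would make it explicit that this series is the Fourier series in the Hilbert space $L^2(\mathbf{\Gamma}\backslash SE(d),\mu)$, so it converges unconditionally in the $L^2$-norm by Parseval, without any need to justify further interchanges. The inner $\gamma$-sum and $p$-integral are already handled by Theorem \ref{Main}, so no new Fubini argument is required at this stage.
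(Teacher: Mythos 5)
Your proposal is correct and is essentially the paper's own proof: expand $\widetilde{f}$ in the orthonormal basis, identify each coefficient $\langle\widetilde{f},\psi_\ell\rangle$ via Theorem \ref{Main} with $\varphi=\psi_\ell$ (noting $Q^\ell_{\gamma\Omega}(p)=Q_{\gamma\Omega}^{\overline{\psi_\ell}}(p)$), and substitute, with $L^2$-convergence of the outer series coming from the basis expansion. No gaps.
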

\begin{proof}
Since $(\psi_\ell)_{\ell\in\mathbb{I}}$ is an orthonormal basis for the Hilbert space $L^2(\mathbf{\Gamma}\backslash SE(d),\mu)$,  we get
\begin{equation}
\widetilde{f}=\sum_{\ell\in\mathbb{I}}\langle\widetilde{f},\psi_\ell\rangle\psi_\ell.
\end{equation} 
Applying Theorem \ref{Main}, we have 
\[
\langle\widetilde{f},\psi_\ell\rangle=C(d)\sum_{\gamma\in\mathbf{\Gamma}}\int_0^\infty\mathrm{tr}\left[\widehat{f}(p)Q_{\gamma\Omega}^\ell(p)\right]p^{d-1}\dd p,
\]
which implies that  
\begin{align*}
\widetilde{f}&=\sum_{\ell\in\mathbb{I}}\langle\widetilde{f},\psi_\ell\rangle\psi_\ell
=C(d)\sum_{\ell\in\mathbb{I}}\left(\sum_{\gamma\in\mathbf{\Gamma}}\int_0^\infty\mathrm{tr}\left[\widehat{f}(p)Q_{\gamma\Omega}^\ell(p)\right]p^{d-1}\dd p\right)\psi_\ell.
\end{align*}
\end{proof}

\begin{corollary}
{\it Suppose $\mathbf{\Gamma}$ is a discrete co-compact subgroup of $SE(d)$ and $\Omega$ is a fundamental domain of $\mathbf{\Gamma}$ in $SE(d)$.  Let $\mathcal{E}(\mathbf{\Gamma}):=(\psi_\ell)_{\ell\in\mathbb{I}}$ be an orthonormal basis for the Hilbert function space $L^2(\mathbf{\Gamma}\backslash SE(d),\mu)$.  Let $K$ be a compact subset of $SE(d)$ and $\mathbb{J}=\{\gamma_1,\cdots,\gamma_{n}\}\subset\mathbf{\Gamma}$ be a finite subset of $\mathbf{\Gamma}$ such that $K=\bigcup_{k=1}^{n}\gamma_k\Omega$.  Assume that $f:SE(d)\to\mathbb{C}$ is a continuous function supported on $K$.  Then 
\begin{equation}
\widetilde{f}=C(d)\sum_{\ell\in\mathbb{I}}\left(\int_0^\infty\mathrm{tr}\left[\widehat{f}(p)Q_{K}^\ell(p)\right]p^{d-1}dp\right)\psi_\ell,
\end{equation}
where 
\[
Q^\ell_{K}(p):=\int_{K}\overline{\psi_\ell(\mathbf{\Gamma} g)} U_p(g)\dd g,
\]
for $p>0$ and $\ell\in\mathbb{I}$.
}\end{corollary}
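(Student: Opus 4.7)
The plan is to derive the claimed reconstruction formula as a direct orthonormal-basis expansion in the Hilbert space $L^2(\mathbf{\Gamma}\backslash SE(d),\mu)$, using Theorem \ref{Main} to replace each Fourier coefficient by its trace-integral form. The hypothesis $\widetilde{f}\in L^2(\mathbf{\Gamma}\backslash SE(d),\mu)$ is what makes this strategy viable, since it places $\widetilde{f}$ in a Hilbert space for which $(\psi_\ell)_{\ell\in\mathbb{I}}$ is an orthonormal basis.

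First I would invoke the abstract Parseval/Fourier expansion: because $(\psi_\ell)_{\ell\in\mathbb{I}}$ is an orthonormal basis of the Hilbert space $L^2(\mathbf{\Gamma}\backslash SE(d),\mu)$, every element of that space, and in particular $\widetilde{f}$, can be written as the norm-convergent series
\[
\widetilde{f}=\sum_{\ell\in\mathbb{I}}\langle\widetilde{f},\psi_\ell\rangle\,\psi_\ell.
\]
This step uses only the Hilbert-space structure and the hypothesis that $\widetilde{f}$ genuinely lies in $L^2(\mathbf{\Gamma}\backslash SE(d),\mu)$.

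Next I would compute each Fourier coefficient $\langle\widetilde{f},\psi_\ell\rangle$ by applying Theorem \ref{Main} with the choice $\varphi=\psi_\ell\in L^2(\mathbf{\Gamma}\backslash SE(d),\mu)$. Theorem \ref{Main} requires $f\in L^1\cap L^2(SE(d))$ together with $\widetilde{f}\in L^2(\mathbf{\Gamma}\backslash SE(d),\mu)$, all of which are assumed in the present statement. It produces exactly
\[
\langle\widetilde{f},\psi_\ell\rangle=C(d)\sum_{\gamma\in\mathbf{\Gamma}}\int_0^\infty\mathrm{tr}\!\left[\widehat{f}(p)\,Q_{\gamma\Omega}^{\overline{\psi_\ell}}(p)\right]p^{d-1}\,\dd p,
\]
and by the very definition of $Q^\ell_{\gamma\Omega}(p)$ given in the statement, we have $Q_{\gamma\Omega}^{\overline{\psi_\ell}}(p)=Q^\ell_{\gamma\Omega}(p)$. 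Substituting this back into the orthonormal expansion yields the claimed formula.

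There is essentially no serious obstacle: the proof is a two-line synthesis of the orthonormal-basis expansion with the already-established Theorem \ref{Main}. The only point one must be slightly careful about is the order of summations: the outer sum over $\ell\in\mathbb{I}$ is understood as the Hilbert-space-norm-convergent series coming from the orthonormal basis, while the inner sum over $\gamma\in\mathbf{\Gamma}$ together with the integral over $p\in(0,\infty)$ is the one already controlled in Theorem \ref{Main}. No interchange of these two outer-most operations is performed, so no further Fubini-type justification is needed at this stage.
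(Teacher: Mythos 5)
There is a genuine gap: your argument does not prove the formula as stated. After expanding $\widetilde{f}=\sum_{\ell}\langle\widetilde{f},\psi_\ell\rangle\psi_\ell$ and applying Theorem \ref{Main} with $\varphi=\psi_\ell$, what you obtain is
\[
\widetilde{f}=C(d)\sum_{\ell\in\mathbb{I}}\Bigl(\sum_{\gamma\in\mathbf{\Gamma}}\int_0^\infty\mathrm{tr}\bigl[\widehat{f}(p)Q_{\gamma\Omega}^{\ell}(p)\bigr]p^{d-1}\dd p\Bigr)\psi_\ell,
\]
which is Theorem \ref{MainRecTh0.L2.K}, not the corollary at hand. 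The corollary asserts the collapsed coefficient $\int_0^\infty\mathrm{tr}\bigl[\widehat{f}(p)Q_{K}^{\ell}(p)\bigr]p^{d-1}\dd p$ with the single operator $Q_K^\ell(p)=\int_K\overline{\psi_\ell(\mathbf{\Gamma} g)}U_p(g)\dd g$, and your closing sentence simply identifies the two without justification. The missing content is exactly where the hypotheses on $K$ and $\mathbb{J}$ enter: (i) for $\gamma\notin\mathbb{J}$ the term indexed by $\gamma$ vanishes, because (as in the proof of Theorem \ref{Main}) $C(d)\int_0^\infty\mathrm{tr}[\widehat{f}(p)Q_{\gamma\Omega}^{\ell}(p)]p^{d-1}\dd p=\int_\Omega f(\gamma\omega)\overline{\psi_\ell(\mathbf{\Gamma}\omega)}\dd\omega$ and $f$ is supported in $K=\bigcup_{k=1}^n\gamma_k\Omega$, so $f(\gamma\omega)=0$ on $\Omega$; (ii) the remaining finite sum over $\gamma_1,\dots,\gamma_n$ must be moved inside the integral and the trace, and $\sum_{k=1}^nQ_{\gamma_k\Omega}^{\ell}(p)=Q_K^{\ell}(p)$ since the translates $\gamma_k\Omega$ tile $K$ up to null sets. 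This is precisely Proposition \ref{main.K}; the intended proof is to apply that proposition with $\varphi=\psi_\ell$ to compute $\langle\widetilde{f},\psi_\ell\rangle=C(d)\int_0^\infty\mathrm{tr}[\widehat{f}(p)Q_K^{\ell}(p)]p^{d-1}\dd p$, and then expand in the orthonormal basis. So either invoke Proposition \ref{main.K} directly or reproduce steps (i)--(ii); as written, your proof stops one theorem short of the stated conclusion.

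A secondary, easily repaired point: the statement does not literally assume $f\in L^1\cap L^2(SE(d))$ or $\widetilde{f}\in L^2(\mathbf{\Gamma}\backslash SE(d),\mu)$, contrary to what you write. These follow from the actual hypothesis that $f$ is continuous and supported in the compact set $K$ (so $f\in\mathcal{C}_c(SE(d))\subset L^1\cap L^2(SE(d))$, and $\widetilde{f}\in\mathcal{C}(\mathbf{\Gamma}\backslash SE(d))\subset L^2(\mathbf{\Gamma}\backslash SE(d),\mu)$ by compactness of $\mathbf{\Gamma}\backslash SE(d)$); you should say this explicitly before using the basis expansion and the coefficient formula.
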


\section{\bf Absolutely Convergent Non-Abelian Fourier Series on $\mathbf{\Gamma}\backslash SE(d)$}
Suppose that $\mathbf{\Gamma}$ is a discrete co-compact  subgroup of $SE(d)$ and $\mu$ is the finite $SE(d)$-invariant measure on the right coset space $\mathbf{\Gamma}\backslash SE(d)$ which is normalized with respect to Weil's formula (\ref{TH.m}).  Let $\mathcal{E}(\mathbf{\Gamma}):=(\psi_\ell)_{\ell\in\mathbb{I}}$ be 
an orthonormal basis of bounded functions for the Hilbert function space $L^2(\mathbf{\Gamma}\backslash SE(d),\mu)$.  We here discus general theory of absolutely convergent non-Abelian Fourier series on $\mathbf{\Gamma}\backslash SE(d)$.

Assume that $\mathcal{A}(\mathcal{E})$ is the linear subspace of $L^2(\mathbf{\Gamma}\backslash SE(d),\mu)$ given by 
\[
\mathcal{A}(\mathcal{E}):=\left\{\psi\in L^2(\mathbf{\Gamma}\backslash SE(d),\mu):
\sum_{\ell\in\mathbb{I}}|\langle\psi,\psi_\ell\rangle|\|\psi_\ell\|_{\infty}<\infty\right\}.
\]

The following observations lists basic properties of function space $\mathcal{A}(\mathcal{E})$.

\begin{proposition}
{\it Let $\mathbf{\Gamma}$ be a discrete co-compact  subgroup of $SE(d)$ and $\mathcal{E}(\mathbf{\Gamma}):=(\psi_\ell)_{\ell\in\mathbb{I}}$ be an orthonormal basis of bounded functions for the Hilbert function space $L^2(\mathbf{\Gamma}\backslash SE(d),\mu)$. Then, 
\begin{enumerate}
\item The sequence $(\|\psi_\ell\|_{\infty}^{-1})_{\ell\in\mathbb{I}}$ is bounded. 
\item If $(a_\ell)\in\bfell^{1}(\mathbb{I})$ then $(\|\psi_\ell\|_{\infty}^{-1}a_\ell)\in\bfell^2(\mathbb{I})$.
\item If $(a_\ell)\in\bfell^{1}(\mathbb{I})$ then  
\[
\sum_{\ell\in\mathbb{I}}\|\psi_\ell\|_{\infty}^{-1}a_\ell\psi_\ell\in\mathcal{A}(\mathcal{E}).
\]
\end{enumerate}
}\end{proposition}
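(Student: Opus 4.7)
My overall plan is to observe that all three parts reduce to two elementary ingredients: the finiteness of $\mu(\mathbf{\Gamma}\backslash SE(d))$, which holds because $\mathbf{\Gamma}$ is co-compact, and the unit $L^2$-normalization of each basis element $\psi_\ell$. Part (1) supplies a uniform bound on $\|\psi_\ell\|_\infty^{-1}$; part (2) then follows by combining (1) with the standard embedding $\bfell^1(\mathbb{I})\subseteq\bfell^2(\mathbb{I})$; and part (3) is a direct verification of the defining condition for $\mathcal{A}(\mathcal{E})$.

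For part (1), I would start from the pointwise inequality $|\psi_\ell(\mathbf{\Gamma} g)|^2\le\|\psi_\ell\|_\infty^2$ and integrate against $\mu$. Since $\|\psi_\ell\|_{L^2(\mathbf{\Gamma}\backslash SE(d),\mu)}=1$, this yields $1\le \|\psi_\ell\|_\infty^2\,\mu(\mathbf{\Gamma}\backslash SE(d))$, so $\|\psi_\ell\|_\infty^{-1}\le \mu(\mathbf{\Gamma}\backslash SE(d))^{1/2}$ uniformly in $\ell\in\mathbb{I}$. For part (2), setting $M:=\mu(\mathbf{\Gamma}\backslash SE(d))^{1/2}$, I would estimate $\sum_\ell \|\psi_\ell\|_\infty^{-2}|a_\ell|^2\le M^2\sum_\ell|a_\ell|^2\le M^2\bigl(\sum_\ell|a_\ell|\bigr)^2<\infty$, where the last inequality uses the standard embedding of $\bfell^1(\mathbb{I})$ into $\bfell^2(\mathbb{I})$.

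For part (3), I would define $\psi:=\sum_\ell \|\psi_\ell\|_\infty^{-1}a_\ell\,\psi_\ell$, whose convergence in $L^2(\mathbf{\Gamma}\backslash SE(d),\mu)$ is guaranteed by part (2) together with orthonormality of $(\psi_\ell)$. Orthonormality also gives $\langle\psi,\psi_k\rangle=\|\psi_k\|_\infty^{-1}a_k$ for each $k\in\mathbb{I}$, and therefore $\sum_k |\langle\psi,\psi_k\rangle|\|\psi_k\|_\infty=\sum_k|a_k|<\infty$, which is exactly the defining condition for $\psi\in\mathcal{A}(\mathcal{E})$. I do not anticipate a real obstacle here; the only delicate point is to record that the uniform bound in part (1) genuinely relies on both the finiteness of $\mu$ (hence on the co-compactness of $\mathbf{\Gamma}$) and on the $L^2$-normalization built into the notion of an orthonormal basis, since without either of these the remaining parts would fail.
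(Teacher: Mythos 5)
Your proposal is correct and follows essentially the same route as the paper: part (1) via integrating the pointwise bound against the finite measure $\mu$ and using $\|\psi_\ell\|_{L^2(\mathbf{\Gamma}\backslash SE(d),\mu)}=1$, part (2) via the uniform bound together with $\bfell^1(\mathbb{I})\subseteq\bfell^2(\mathbb{I})$, and part (3) by computing $\langle\psi,\psi_k\rangle=\|\psi_k\|_{\infty}^{-1}a_k$ from orthonormality and verifying the defining condition of $\mathcal{A}(\mathcal{E})$. No gaps.
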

\begin{proof}
(1) Suppose $\ell\in\mathbb{I}$ is given.  
Since $\mathbf{\Gamma}\backslash SE(d)$ is compact and so has finite volume,  we have  
\[
\|\psi_\ell\|_{L^2(\mathbf{\Gamma}\backslash SE(d),\mu)}^2=\int_{L^2(\mathbf{\Gamma}\backslash SE(d),\mu)}|\psi_\ell(\mathbf{\Gamma} g)|^2d\mu(\mathbf{\Gamma} g)\le\|\psi_\ell\|_{\infty}^2\mu(\mathbf{\Gamma}\backslash SE(d)).
\]
Henceforth,  we obtain 
\[
\|\psi_\ell\|_{\infty}^{-1}\le\|\psi_\ell\|_{L^2(\mathbf{\Gamma}\backslash SE(d),\mu)}^{-1}\sqrt{\mu(\mathbf{\Gamma}\backslash SE(d))}=\mu(\mathbf{\Gamma}\backslash SE(d))^{1/2}.
\] 
(2) Let $(a_\ell)\in\bfell^{1}(\mathbb{I})$ be given.
Invoking boundedness of the sequence $(\|\psi_\ell\|_{\infty}^{-1})_{\ell\in\mathbb{I}}$,  we conclude that 
$(\|\psi_\ell\|_{\infty}^{-1}a_\ell)\in\bfell^{1}(\mathbb{I})$.
Since $\bfell^1(\mathbb{I})\subseteq\bfell^2(\mathbb{I})$,  we get $(\|\psi_\ell\|_{\sup}^{-1}a_\ell)\in\bfell^2(\mathbb{I})$.\\
(3) Assume that $(a_\ell)\in\bfell^{1}(\mathbb{I})$ is given.  Using (2), we have $(\|\psi_\ell\|_{\infty}^{-1}a_\ell)\in\bfell^2(\mathbb{I})$. So,  we get 
$\sum_{\ell\in\mathbb{I}}\|\psi_\ell\|_{\infty}^{-1}a_\ell\psi_\ell\in L^2(\mathbf{\Gamma}\backslash SE(d),\mu)$.  Suppose $\psi:=\sum_{\ell\in\mathbb{I}}\|\psi_\ell\|_{\infty}^{-1}a_\ell\psi_\ell$ and $\ell'\in\mathbb{I}$.
Then $\langle\psi,\psi_{\ell'}\rangle=\|\psi_{\ell'}\|_{\infty}^{-1}a_{\ell'}$. Hence,  we achieve 
\[
\sum_{\ell'\in\mathbb{I}}|\langle\psi,\psi_{\ell'}\rangle|\|\psi_{\ell'}\|_{\infty}=\sum_{\ell'\in\mathbb{I}}|a_{\ell'}|<\infty,
\]
which implies that $\sum_{\ell\in\mathbb{I}}\|\psi_\ell\|_{\infty}^{-1}a_\ell\psi_\ell\in\mathcal{A}(\mathcal{E})$.
\end{proof}

\begin{theorem}\label{Main.A.E}
{\it Let $\mathbf{\Gamma}$ be a discrete co-compact  subgroup of $SE(d)$ and $\mathcal{E}(\mathbf{\Gamma}):=(\psi_\ell)_{\ell\in\mathbb{I}}$ be an orthonormal basis
of bounded functions for the Hilbert function space $L^2(\mathbf{\Gamma}\backslash SE(d),\mu)$.  Suppose $\psi\in\mathcal{A}(\mathcal{E})$ is given.  Then, 
\begin{enumerate}
\item The series $\sum_{\ell\in\mathbb{I}}\langle\psi,\psi_\ell\rangle\psi_\ell$ converges uniformly on $\mathbf{\Gamma}\backslash SE(d)$. 
\item For $\mu$-a.e. $\mathbf{\Gamma} g\in\mathbf{\Gamma}\backslash SE(d)$, we have 
\begin{equation}\label{a.e.main}
\psi(\mathbf{\Gamma} g)=\sum_{\ell\in\mathbb{I}}\langle\psi,\psi_\ell\rangle\psi_\ell(\mathbf{\Gamma} g).
\end{equation}
\end{enumerate}
}\end{theorem}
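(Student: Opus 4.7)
My plan is to exploit the hypothesis $\psi \in \mathcal{A}(\mathcal{E})$, which provides an $\ell^1$-type dominating bound $\sum_{\ell} |\langle \psi, \psi_\ell \rangle| \|\psi_\ell\|_\infty < \infty$, to compare the partial sums simultaneously in the uniform norm and in the $L^2$ norm, and then invoke uniqueness of $L^2$ limits to reconcile the two modes of convergence.

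For part (1), I would apply a Weierstrass $M$-test argument. For any finite sets $F \subset F' \subset \mathbb{I}$, and any $\mathbf{\Gamma} g \in \mathbf{\Gamma}\backslash SE(d)$, the partial sum difference satisfies
\[
\left| \sum_{\ell \in F'} \langle \psi, \psi_\ell \rangle \psi_\ell(\mathbf{\Gamma} g) - \sum_{\ell \in F} \langle \psi, \psi_\ell \rangle \psi_\ell(\mathbf{\Gamma} g) \right| \le \sum_{\ell \in F' \setminus F} |\langle \psi, \psi_\ell \rangle| \, \|\psi_\ell\|_\infty.
\]
Since the series $\sum_{\ell} |\langle \psi, \psi_\ell \rangle|\|\psi_\ell\|_\infty$ is finite (summable), the tails above go to zero uniformly in $\mathbf{\Gamma} g$. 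Hence the net of partial sums is uniformly Cauchy on $\mathbf{\Gamma}\backslash SE(d)$ and converges uniformly to a bounded function $\Psi := \sum_{\ell\in\mathbb{I}}\langle\psi,\psi_\ell\rangle\psi_\ell$ on $\mathbf{\Gamma}\backslash SE(d)$. Note the summability forces all but countably many coefficients $\langle\psi,\psi_\ell\rangle$ to vanish, so order-of-summation issues do not arise.

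For part (2), I would compare two limits. Because $\mathcal{E}(\mathbf{\Gamma}) = (\psi_\ell)_{\ell\in\mathbb{I}}$ is an orthonormal basis for $L^2(\mathbf{\Gamma}\backslash SE(d),\mu)$, the general Parseval/Fourier expansion theorem guarantees
\[
\psi = \sum_{\ell \in \mathbb{I}} \langle \psi, \psi_\ell \rangle \psi_\ell \quad \text{in } L^2(\mathbf{\Gamma}\backslash SE(d),\mu).
\]
On the other hand, because $\mu$ is a finite measure on $\mathbf{\Gamma}\backslash SE(d)$ (the coset space is compact), uniform convergence of the partial sums to $\Psi$ implies convergence in $L^2(\mathbf{\Gamma}\backslash SE(d),\mu)$ as well (the uniform bound multiplied by $\sqrt{\mu(\mathbf{\Gamma}\backslash SE(d))}$ controls the $L^2$ norm). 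Hence the partial sums converge in $L^2$ to $\Psi$. By uniqueness of the $L^2$ limit we get $\psi = \Psi$ in $L^2(\mathbf{\Gamma}\backslash SE(d),\mu)$, and therefore $\psi(\mathbf{\Gamma} g) = \Psi(\mathbf{\Gamma} g)$ for $\mu$-a.e. $\mathbf{\Gamma} g$, which is exactly (\ref{a.e.main}).

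The potential subtlety, rather than a genuine obstacle, lies in step (1): one must verify that the partial sums are a genuine Cauchy net in the sup norm despite $\mathbb{I}$ possibly being arbitrarily indexed. This is dispatched once one notes that $\sum_\ell |\langle\psi,\psi_\ell\rangle|\|\psi_\ell\|_\infty<\infty$ already restricts the effective index set to a countable subset and gives unconditional absolute summability, so any enumeration of that subset yields the same uniform limit $\Psi$. The two-norm comparison in step (2) then proceeds cleanly.
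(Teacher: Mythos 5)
Your proposal is correct, and part (1) coincides with the paper's argument (a Weierstrass $M$-test giving a uniformly Cauchy net of partial sums, with the summability hypothesis also disposing of any ordering issues). For part (2) you take a slightly different route than the paper: the paper defines the uniform limit $\varphi:=\sum_{\ell}\langle\psi,\psi_\ell\rangle\psi_\ell$, first checks $\varphi\in L^2(\mathbf{\Gamma}\backslash SE(d),\mu)$ via Minkowski's inequality, then computes $\langle\varphi,\psi_{\ell'}\rangle=\langle\psi,\psi_{\ell'}\rangle$ for every $\ell'$ (interchanging the sum with the integral, which the uniform convergence justifies) and concludes $\varphi=\psi$ from completeness of the orthonormal basis; you instead observe that the same net of partial sums converges in $L^2$ both to $\psi$ (Parseval expansion for the orthonormal basis) and to the uniform limit $\Psi$ (since $\mu$ is finite, the sup norm dominates the $L^2$ norm up to $\mu(\mathbf{\Gamma}\backslash SE(d))^{1/2}$), so uniqueness of $L^2$ limits gives $\psi=\Psi$ $\mu$-a.e. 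Your two-norm comparison buys a shorter argument that avoids both the Minkowski step and the sum--integral interchange, at the cost of explicitly invoking the unconditional $L^2$ convergence of the basis expansion as a net over finite subsets; the paper's coefficient-matching argument is marginally more hands-on but needs the extra verification that the uniform limit lies in $L^2$ before completeness can be applied. Both proofs rest on the same three ingredients: the $\ell^1$ bound defining $\mathcal{A}(\mathcal{E})$, finiteness of $\mu$, and the orthonormal basis property of $(\psi_\ell)_{\ell\in\mathbb{I}}$.
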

\begin{proof}
(1) Applying the Weierstrass $M$-test,  we conclude that the series $\sum_{\ell\in\mathbb{I}}\langle\psi,\psi_\ell\rangle\psi_\ell$ converges uniformly on $\mathbf{\Gamma}\backslash SE(d)$.\\
(2) Invoking (1), the complex series $\sum_{\ell\in\mathbb{I}}\langle\psi,\psi_\ell\rangle\psi_\ell(\mathbf{\Gamma} g)$ converges,  for every $g\in SE(d)$. 
Let $\varphi(\mathbf{\Gamma} g):=\sum_{\ell\in\mathbb{I}}\langle\psi,\psi_\ell\rangle\psi_\ell(\mathbf{\Gamma} g)$. 
Then, $\varphi:\mathbf{\Gamma}\backslash SE(d)\to\mathbb{C}$  is a well-defined complex valued bounded function.  In addition, using Minkowski's integral inequality for integrals,  we have 
\begin{align*}
\|\varphi\|_{L^2(\mathbf{\Gamma}\backslash SE(d),\mu)}&=\left(\int_{\mathbf{\Gamma}\backslash SE(d)}|\varphi(\mathbf{\Gamma} g)|^2\dd\mu(\mathbf{\Gamma} g)\right)^{1/2}
=\left(\int_{\mathbf{\Gamma}\backslash SE(d)}\left|\sum_{\ell\in\mathbb{I}}\langle\psi,\psi_\ell\rangle\psi_\ell(\mathbf{\Gamma} g)\right|^2\dd\mu(\mathbf{\Gamma} g)\right)^{1/2}
\\&\le\sum_{\ell\in\mathbb{I}}\left(\int_{\mathbf{\Gamma}\backslash SE(d)}|\langle\psi,\psi_\ell\rangle|^2|\psi_\ell(\mathbf{\Gamma} g)|^2\dd\mu(\mathbf{\Gamma} g)\right)^{1/2}=\sum_{\ell\in\mathbb{I}}|\langle\psi,\psi_\ell\rangle|\|\psi_\ell\|_{L^2(\mathbf{\Gamma}\backslash SE(d),\mu)}<\infty,
\end{align*}
which guarantees that $\varphi\in L^2(\mathbf{\Gamma}\backslash SE(d),\mu)$.
Let $\ell'\in\mathbb{I}$ be arbitrary.  Then  
\begin{align*}
\langle\varphi,\psi_{\ell'}\rangle&=\int_{\mathbf{\Gamma}\backslash SE(d)}\varphi(\mathbf{\Gamma} g)\overline{\psi_{\ell'}(\mathbf{\Gamma} g)}d\mu(\mathbf{\Gamma} g)
\\&=\int_{\mathbf{\Gamma}\backslash SE(d)}\left(\sum_{\ell\in\mathbb{I}}\langle\psi,\psi_\ell\rangle\psi_\ell(\mathbf{\Gamma} g)\right)\overline{\psi_{\ell'}(\mathbf{\Gamma} g)}d\mu(\mathbf{\Gamma} g)
\\&=\sum_{\ell\in\mathbb{I}}\langle\psi,\psi_\ell\rangle\left(\int_{\mathbf{\Gamma}\backslash SE(d)}\psi_\ell(\mathbf{\Gamma} g)\overline{\psi_{\ell'}(\mathbf{\Gamma} g)}d\mu(\mathbf{\Gamma} g)\right)\\&=\sum_{\ell\in\mathbb{I}}\langle\psi,\psi_\ell\rangle\langle\psi_\ell,\psi_{\ell'}\rangle
=\sum_{\ell\in\mathbb{I}}\langle\psi,\psi_\ell\rangle\delta_{\ell,\ell'}=\langle\psi,\psi_{\ell'}\rangle,
\end{align*}
implying that $\varphi=\psi$ in $L^2(\mathbf{\Gamma}\backslash SE(d),\mu)$. 
Therefore,  Equation (\ref{a.e.main}) holds for $\mu$-a.e. $\mathbf{\Gamma} g\in\mathbf{\Gamma}\backslash SE(d)$.\\
\end{proof}

\begin{corollary}\label{main.cts}
{\it Let $\mathbf{\Gamma}$ be a discrete co-compact  subgroup of $SE(d)$ and $\mathcal{E}(\mathbf{\Gamma}):=(\psi_\ell)_{\ell\in\mathbb{I}}$ be an orthonormal basis of continuous functions for the Hilbert function space $L^2(\mathbf{\Gamma}\backslash SE(d),\mu)$. 
Suppose $\psi\in\mathcal{A}(\mathcal{E})$ is continuous and $\mathbf{\Gamma} g\in\mathbf{\Gamma}\backslash SE(d)$ is arbitrary.  Then, 
\[
\psi(\mathbf{\Gamma} g)=\sum_{\ell\in\mathbb{I}}\langle\psi,\psi_\ell\rangle\psi_\ell(\mathbf{\Gamma} g).
\]
}\end{corollary}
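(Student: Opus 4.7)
The plan is to upgrade the almost-everywhere equality from Theorem~\ref{Main.A.E}(2) to a pointwise equality on all of $\mathbf{\Gamma}\backslash SE(d)$ by exploiting the continuity of both sides together with the fact that the $SE(d)$-invariant measure $\mu$ has full support on the compact homogeneous space $\mathbf{\Gamma}\backslash SE(d)$.

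First I would define $\varphi:\mathbf{\Gamma}\backslash SE(d)\to\mathbb{C}$ by
\[
\varphi(\mathbf{\Gamma} g):=\sum_{\ell\in\mathbb{I}}\langle\psi,\psi_\ell\rangle\psi_\ell(\mathbf{\Gamma} g).
\]
By Theorem~\ref{Main.A.E}(1), this series converges uniformly on $\mathbf{\Gamma}\backslash SE(d)$. Each partial sum is a finite linear combination of the continuous functions $\psi_\ell$, hence continuous, and uniform limits of continuous functions on a topological space are continuous, so $\varphi$ is continuous. By hypothesis, $\psi$ is also continuous, so we are comparing two continuous functions on $\mathbf{\Gamma}\backslash SE(d)$.

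Next I invoke Theorem~\ref{Main.A.E}(2), which supplies the identity $\varphi=\psi$ $\mu$-a.e. on $\mathbf{\Gamma}\backslash SE(d)$. It remains to promote this to an everywhere equality. The key observation is that $\mu$ has full topological support: since $\mu$ is $SE(d)$-invariant and the action of $SE(d)$ on $\mathbf{\Gamma}\backslash SE(d)$ is transitive, any nonempty open set $U\subseteq\mathbf{\Gamma}\backslash SE(d)$ satisfies $\mu(U)>0$ (otherwise the translates $U\circ h$ would cover the compact space by countably many null sets, contradicting $\mu(\mathbf{\Gamma}\backslash SE(d))>0$). Consequently, the set $\{\mathbf{\Gamma} g:\varphi(\mathbf{\Gamma} g)=\psi(\mathbf{\Gamma} g)\}$, being the complement of a $\mu$-null set, is dense in $\mathbf{\Gamma}\backslash SE(d)$. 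Two continuous functions that agree on a dense subset of a topological space agree everywhere, so $\psi(\mathbf{\Gamma} g)=\varphi(\mathbf{\Gamma} g)$ for every $\mathbf{\Gamma} g\in\mathbf{\Gamma}\backslash SE(d)$, as claimed.

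The only substantive step beyond quoting Theorem~\ref{Main.A.E} is justifying that $\mu$ has full support; this is essentially a routine consequence of invariance and transitivity, so I anticipate no real obstacle. Everything else is a standard uniform-convergence and dense-agreement argument.
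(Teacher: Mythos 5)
Your proof is correct and follows exactly the route the paper intends: the corollary is left as an immediate consequence of Theorem~\ref{Main.A.E}, with continuity of the uniform limit, the a.e.\ identity, and the full support of the invariant measure $\mu$ upgrading equality to every point. Your explicit justification that $\mu$ charges every nonempty open set (via transitivity, invariance, and compactness) is a correct filling-in of the step the paper leaves implicit.
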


Then we conclude the following pointwise approximation for functions on the right coset space $\mathbf{\Gamma}\backslash SE(d)$ using non-Abelian Fourier transform on $SE(d)$.

\begin{theorem}\label{ABC.Main}
Suppose $\mathbf{\Gamma}$ is a discrete co-compact subgroup of $SE(d)$ and $\mathcal{E}(\mathbf{\Gamma}):=(\psi_\ell)_{\ell\in\mathbb{I}}$ be an orthonormal basis of bounded functions for the Hilbert function space $L^2(\mathbf{\Gamma}\backslash SE(d),\mu)$.  
Let $f\in L^1\cap L^2(SE(d))$ with $\widetilde{f}\in L^2(\mathbf{\Gamma}\backslash SE(d),\mu)$ and $\widetilde{f}\in\mathcal{A}(\mathcal{E})$.  Then,  for $\mu$-a.e.  $\mathbf{\Gamma} h\in \mathbf{\Gamma}\backslash SE(d)$, we have 
\begin{equation}\label{FourierInvQ0.K}
\widetilde{f}(\mathbf{\Gamma} h)=C(d)\sum_{\ell\in\mathbb{I}}\left(\sum_{\gamma\in\mathbf{\Gamma}}\int_0^\infty\mathrm{tr}\left[\widehat{f}(p)Q_{\gamma\Omega}^\ell(p)\right]p^{d-1}\dd p\right)\psi_\ell(\mathbf{\Gamma} h),
\end{equation}
\end{theorem}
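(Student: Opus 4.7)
The plan is to observe that this theorem is essentially a direct synthesis of two already-established results in the paper, so the proof reduces to stitching them together in the correct order.

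First, I would apply Theorem \ref{Main.A.E}(2) to the function $\psi := \widetilde{f}$, which by hypothesis lies in $\mathcal{A}(\mathcal{E})$. This immediately yields the pointwise reconstruction
\[
\widetilde{f}(\mathbf{\Gamma} h) = \sum_{\ell \in \mathbb{I}} \langle \widetilde{f}, \psi_\ell \rangle \psi_\ell(\mathbf{\Gamma} h)
\]
holding for $\mu$-a.e.\ $\mathbf{\Gamma} h \in \mathbf{\Gamma}\backslash SE(d)$, with the series converging absolutely (in fact uniformly) by Theorem \ref{Main.A.E}(1).

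Second, I would invoke Theorem \ref{Main} with $\varphi = \psi_\ell$ to evaluate each Fourier coefficient explicitly. Since $\psi_\ell \in L^2(\mathbf{\Gamma}\backslash SE(d),\mu)$ by hypothesis, and $f \in L^1 \cap L^2(SE(d))$ with $\widetilde{f} \in L^2(\mathbf{\Gamma}\backslash SE(d),\mu)$, the hypotheses of Theorem \ref{Main} are satisfied, and we obtain
\[
\langle \widetilde{f}, \psi_\ell \rangle = C(d) \sum_{\gamma \in \mathbf{\Gamma}} \int_0^\infty \mathrm{tr}\!\left[\widehat{f}(p) Q_{\gamma\Omega}^\ell(p)\right] p^{d-1} \, \mathrm{d}p,
\]
where $Q_{\gamma\Omega}^\ell(p)$ is defined with $\overline{\psi_\ell}$ as in Theorem \ref{MainRecTh0.L2.K}.

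Substituting this expression for $\langle \widetilde{f}, \psi_\ell \rangle$ back into the pointwise expansion from the first step gives precisely equation (\ref{FourierInvQ0.K}). The only point that needs a brief sanity check is that the outer sum over $\ell$ in the resulting identity is the same absolutely convergent series guaranteed by Theorem \ref{Main.A.E}, so no reordering or further interchange of limits is required; the coefficient formula from Theorem \ref{Main} is just substituted term-by-term. There is essentially no obstacle here — the content of the theorem is entirely carried by Theorem \ref{Main} (which supplies the coefficient formula) and Theorem \ref{Main.A.E} (which supplies the a.e.\ pointwise reconstruction), and the statement records their combination.
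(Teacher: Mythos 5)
Your proposal is correct and follows essentially the same route as the paper's own proof: apply Theorem \ref{Main.A.E}(2) to $\widetilde{f}\in\mathcal{A}(\mathcal{E})$ for the $\mu$-a.e.\ pointwise expansion, then substitute the coefficient formula $\langle\widetilde{f},\psi_\ell\rangle=C(d)\sum_{\gamma\in\mathbf{\Gamma}}\int_0^\infty\mathrm{tr}\bigl[\widehat{f}(p)Q_{\gamma\Omega}^\ell(p)\bigr]p^{d-1}\dd p$ from Theorem \ref{Main}. No gaps.
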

\begin{proof}
Using Theorem \ref{Main.A.E}(2), we get 
\begin{equation}\label{cldec}
\widetilde{f}(\mathbf{\Gamma} h)=\sum_{\ell\in\mathbb{I}}\langle\widetilde{f},\psi_\ell\rangle\psi_\ell(\mathbf{\Gamma} h),
\end{equation}
for $\mu$-a.e. $\mathbf{\Gamma} h\in \mathbf{\Gamma}\backslash SE(d)$.  In addition,  using Theorem \ref{Main}, we obtain   
\begin{equation}\label{cl.0}
\langle\widetilde{f},\psi_\ell\rangle=C(d)\sum_{\gamma\in\mathbf{\Gamma}}\int_0^\infty\mathrm{tr}\left[\widehat{f}(p)Q_{\gamma\Omega}^\ell(p)\right]p^{d-1}\dd p.
\end{equation}
for all $\ell\in\mathbb{I}$.   Then applying (\ref{cl.0}) in (\ref{cldec}) we get 
\begin{align*}
\widetilde{f}(\mathbf{\Gamma} h)&=\sum_{\ell\in\mathbb{I}}\langle\widetilde{f},\psi_\ell\rangle\psi_\ell(\mathbf{\Gamma} h)
=C(d)\sum_{\ell\in\mathbb{I}}\left(\int_0^\infty\mathrm{tr}\left[\widehat{f}(p)Q_K^\ell(p)\right]p^{d-1}\dd p\right)\psi_\ell(\mathbf{\Gamma} h),
\end{align*}
for $\mu$-a.e. $\mathbf{\Gamma} h\in \mathbf{\Gamma}\backslash SE(d)$. 
\end{proof}

\begin{proposition}
{\it Suppose $\mathbf{\Gamma}$ is a discrete co-compact  subgroup of $SE(d)$ and $\mathcal{E}(\mathbf{\Gamma}):=(\psi_\ell)_{\ell\in\mathbb{I}}$ is an orthonormal basis of continuous functions for the Hilbert function space $L^2(\mathbf{\Gamma}\backslash SE(d),\mu)$.  
Let $K\subset SE(d)$ is compact and $f\in\mathcal{C}_c(SE(d))$ with $\mathrm{supp}(f)\subset K$ and $\widetilde{f}\in\mathcal{A}(\mathcal{E})$.  Then,  for every $\mathbf{\Gamma} h\in \mathbf{\Gamma}\backslash SE(d)$, we have 
\begin{equation}
\widetilde{f}(\mathbf{\Gamma} h)=C(d)\sum_{\ell\in\mathbb{I}}\left(\int_0^\infty\mathrm{tr}\left[\widehat{f}(p)Q_{K}^\ell(p)\right]p^{d-1}\dd p\right)\psi_\ell(\mathbf{\Gamma} h),
\end{equation}
}\end{proposition}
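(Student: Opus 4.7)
The plan is to combine two earlier results: the pointwise Fourier reconstruction for continuous members of $\mathcal{A}(\mathcal{E})$ (Corollary \ref{main.cts}), and the compactly supported form of the inner product identity (Proposition \ref{main.K}). The structural observation is that the left-hand side is produced by Corollary \ref{main.cts} applied to $\widetilde{f}$, while each of its Fourier coefficients $\langle \widetilde{f},\psi_\ell\rangle$ is exactly what Proposition \ref{main.K} evaluates.

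First I would verify the continuity hypothesis needed to invoke Corollary \ref{main.cts}. Since $f\in\mathcal{C}_c(SE(d))$ with $\mathrm{supp}(f)\subset K$, formula (\ref{5.1}) defining $\widetilde{f}$ reduces on each point to a finite sum (only finitely many $\gamma\in\mathbf{\Gamma}$ satisfy $\gamma\circ g\in K$, as $\mathbf{\Gamma}$ is discrete and $K$ is compact), and standard locally finite arguments show $\widetilde{f}\in\mathcal{C}(\mathbf{\Gamma}\backslash SE(d))$. Together with the standing hypothesis $\widetilde{f}\in\mathcal{A}(\mathcal{E})$ and the assumption that the $\psi_\ell$ are continuous, Corollary \ref{main.cts} then yields the pointwise expansion
\[
\widetilde{f}(\mathbf{\Gamma} h)=\sum_{\ell\in\mathbb{I}}\langle\widetilde{f},\psi_\ell\rangle\psi_\ell(\mathbf{\Gamma} h)
\]
for \emph{every} $\mathbf{\Gamma} h\in\mathbf{\Gamma}\backslash SE(d)$, not merely $\mu$-a.e.

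Next I would compute each coefficient $\langle\widetilde{f},\psi_\ell\rangle$ via Proposition \ref{main.K}. Taking $\varphi:=\psi_\ell\in L^2(\mathbf{\Gamma}\backslash SE(d),\mu)$ (which holds because $(\psi_\ell)_{\ell\in\mathbb{I}}$ is orthonormal) and using the compactness of $K$ together with the decomposition $K=\bigcup_{k=1}^n\gamma_k\Omega$, Proposition \ref{main.K} gives
\[
\langle\widetilde{f},\psi_\ell\rangle=C(d)\int_0^\infty\mathrm{tr}\!\left[\widehat{f}(p)Q_{K}^{\overline{\psi_\ell}}(p)\right]p^{d-1}\dd p,
\]
and by the definition of $Q_K^\ell(p):=\int_K\overline{\psi_\ell(\mathbf{\Gamma} g)}U_p(g)\dd g$ this is precisely $Q_K^{\overline{\psi_\ell}}(p)$ in the notation of Proposition \ref{main.K}. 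Substituting back into the pointwise expansion produces the claimed identity for every $\mathbf{\Gamma} h$.

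I do not expect substantive obstacles: both of the two ingredients are already established, and the compact-support hypothesis is designed precisely so that the sum over $\gamma\in\mathbf{\Gamma}$ in Theorem \ref{Main} collapses to a single integral over $K$. The only point deserving care is the continuity upgrade from the $\mu$-a.e.\ statement of Theorem \ref{Main.A.E}(2) to an everywhere statement, which is handled by Corollary \ref{main.cts} and the continuity of $\widetilde{f}$ observed above. Since the right-hand side is itself continuous in $\mathbf{\Gamma} h$ (being the uniform limit established in Theorem \ref{Main.A.E}(1) of continuous functions), the equality propagates from a.e.\ to everywhere.
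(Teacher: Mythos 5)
Your proposal is correct and follows essentially the same route as the paper: the paper's proof likewise observes that $f\in\mathcal{C}_c(SE(d))$ forces $\widetilde{f}\in\mathcal{C}(\mathbf{\Gamma}\backslash SE(d))$ and then invokes Corollary \ref{main.cts} to upgrade the $\mu$-a.e.\ expansion to an everywhere statement, with the coefficient identity (Theorem \ref{Main}/Proposition \ref{main.K}) giving the $Q_K^\ell$ form. One small inaccuracy to note: an arbitrary compact $K\supset\mathrm{supp}(f)$ need not literally equal a finite union $\bigcup_{k=1}^n\gamma_k\Omega$ as you assert, only be contained in one; this is harmless, since you can either enlarge $K$ to such a union and observe the surplus contributes nothing (as $f$ vanishes there), or apply the Plancherel--polarization step of Theorem \ref{Main} directly to $g\mapsto E_K(g)\psi_\ell(\mathbf{\Gamma} g)\in L^1\cap L^2(SE(d))$, which yields $\langle\widetilde{f},\psi_\ell\rangle=C(d)\int_0^\infty\mathrm{tr}\bigl[\widehat{f}(p)Q_K^{\overline{\psi_\ell}}(p)\bigr]p^{d-1}\,\dd p$ for any such $K$.
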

\begin{proof}
Since $f\in\mathcal{C}_c(SE(d))$,  we conclude that 
$\widetilde{f}\in\mathcal{C}(\mathbf{\Gamma}\backslash SE(d))$.  So,  using Corollary \ref{main.cts}, we get that the function $\widetilde{f}$,  satisfies Equation (\ref{FourierInvQ0.K}), for every $\mathbf{\Gamma} h\in \mathbf{\Gamma}\backslash SE(d)$.
\end{proof}

\begin{corollary}\label{MainRecTh0}
{\it Suppose $\mathbf{\Gamma}$ is a discrete co-compact subgroup of $SE(d)$ and $\mathcal{E}(\mathbf{\Gamma}):=(\psi_\ell)_{\ell\in\mathbb{I}}$ is an orthonormal basis of continuous functions for the Hilbert function space $L^2(\mathbf{\Gamma}\backslash SE(d),\mu)$.  Assume that $\Omega$ is a fundamental domain of $\mathbf{\Gamma}$ in $SE(d)$.  Let $f:SE(d)\to\mathbb{C}$ be a continuous function supported in $\Omega$ such that $\widetilde{f}\in\mathcal{A}(\mathcal{E})$. Then, for every $\omega\in\Omega$, we have 
\begin{equation}\label{FourierInvQ0}
f(\omega)=C(d)\sum_{\ell\in\mathbb{I}}\left(\int_0^\infty\mathrm{tr}\left[\widehat{f}(p)Q_\Omega^\ell(p)\right]pdp\right)\psi_\ell(\mathbf{\Gamma}\omega),
\end{equation}
 where 
\[
Q^\ell_\Omega(p):=\int_{\Omega}\overline{\psi_\ell(\mathbf{\Gamma} g)}U_p(g)\dd g,
\]
for $p>0$ and $\ell\in\mathbb{I}$.
}\end{corollary}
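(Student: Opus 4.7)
The plan is to obtain this corollary as an immediate specialization of the preceding Proposition. Because $f$ is continuous and supported in $\Omega$, we have $f\in\mathcal{C}_c(SE(d))$ with $\mathrm{supp}(f)$ a compact subset of the closure $\overline{\Omega}$; together with the hypothesis $\widetilde{f}\in\mathcal{A}(\mathcal{E})$, all the assumptions of the preceding Proposition are met with the choice $K=\overline{\Omega}$. Applying that Proposition would yield, for every $\mathbf{\Gamma} h\in\mathbf{\Gamma}\backslash SE(d)$,
\[
\widetilde{f}(\mathbf{\Gamma} h)=C(d)\sum_{\ell\in\mathbb{I}}\left(\int_0^\infty\mathrm{tr}\left[\widehat{f}(p)Q_{\overline{\Omega}}^\ell(p)\right]p^{d-1}\dd p\right)\psi_\ell(\mathbf{\Gamma} h).
\]
Since the boundary $\overline{\Omega}\setminus\Omega$ of a fundamental domain has Haar measure zero, the weakly defined operators $Q_{\overline{\Omega}}^\ell(p)$ and $Q_\Omega^\ell(p)$ coincide, so one may replace $\overline{\Omega}$ by $\Omega$ in the integral without changing its value.

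Next, I would specialize $\mathbf{\Gamma} h=\mathbf{\Gamma}\omega$ for $\omega\in\Omega$ and simplify the left-hand side. The defining property of a fundamental domain guarantees that for $\omega\in\Omega$ the only $\gamma\in\mathbf{\Gamma}$ with $\gamma\circ\omega\in\Omega$ is $\gamma=e$. Since $f$ vanishes outside $\Omega$, every term with $\gamma\ne e$ in
\[
\widetilde{f}(\mathbf{\Gamma}\omega)=\sum_{\gamma\in\mathbf{\Gamma}}f(\gamma\circ\omega)
\]
drops out, so $\widetilde{f}(\mathbf{\Gamma}\omega)=f(\omega)$ for every $\omega\in\Omega$. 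Substituting this identification into the previous display produces the desired reconstruction formula for $f(\omega)$ in terms of $\widehat{f}(p)$ and the operators $Q_\Omega^\ell(p)$.

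The only real obstacle is the standard bookkeeping around the boundary of the fundamental domain: $\Omega$ itself is typically not compact but only has compact closure, so one must pass to $\overline{\Omega}$ to invoke the compactness hypothesis of the preceding Proposition and then discard the null set $\overline{\Omega}\setminus\Omega$ to recognize $Q_\Omega^\ell(p)$. Apart from this routine point, the corollary follows directly by combining the Proposition with the defining property of a fundamental domain.
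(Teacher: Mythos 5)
Your overall strategy is the one the paper intends (the corollary is stated without proof): specialize the preceding expansion results to $f$ supported in $\Omega$ and use the fundamental-domain property to identify $\widetilde{f}(\mathbf{\Gamma}\omega)=f(\omega)$ for $\omega\in\Omega$; that last identification is argued correctly. However, your detour through $K=\overline{\Omega}$ contains an unjustified step: to pass from $Q_{\overline{\Omega}}^\ell(p)$ to $Q_\Omega^\ell(p)$ you assert that $\overline{\Omega}\setminus\Omega$ has Haar measure zero. For an arbitrary (measurable) fundamental domain this is not automatic --- nothing in the paper's hypotheses forces the closure of $\Omega$ to differ from $\Omega$ by a null set (nor, strictly speaking, forces $\overline{\Omega}$ to be compact, though relative compactness is implicitly assumed throughout). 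As written, this step is a genuine gap rather than routine bookkeeping.

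The gap is avoidable, and the cleaner route is the one the paper's earlier results already supply: since $f\in\mathcal{C}_c(SE(d))$, $\widetilde{f}$ is continuous, and $\widetilde{f}\in\mathcal{A}(\mathcal{E})$, so Corollary \ref{main.cts} gives $\widetilde{f}(\mathbf{\Gamma} g)=\sum_{\ell\in\mathbb{I}}\langle\widetilde{f},\psi_\ell\rangle\psi_\ell(\mathbf{\Gamma} g)$ at \emph{every} point. For the coefficients, use Corollary \ref{main.O} (equivalently, Theorem \ref{Main} directly: in the sum over $\gamma\in\mathbf{\Gamma}$ only the term $\gamma=e$ survives, because $\mathrm{supp}(f)\subset\Omega$ and the translates $\gamma\Omega$ are pairwise disjoint, so $f$ vanishes on $\gamma\Omega$ for $\gamma\neq e$), which yields
\[
\langle\widetilde{f},\psi_\ell\rangle=C(d)\int_0^\infty\mathrm{tr}\left[\widehat{f}(p)Q_{\Omega}^\ell(p)\right]p^{d-1}\dd p
\]
with $Q_\Omega^\ell$ appearing directly, no compact closure and no boundary null-set claim needed. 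Combining this with the pointwise expansion and with $\widetilde{f}(\mathbf{\Gamma}\omega)=f(\omega)$ for $\omega\in\Omega$ gives the stated formula.
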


\section{{\bf Non-Abelian Fourier Series of Convolutions on $\mathbf{\Gamma}\backslash SE(d)$}}

Throughout this section, suppose that $\mathbf{\Gamma}$ is a discrete co-compact  subgroup of $SE(d)$ and $\mu$ is the finite $SE(d)$-invariant measure on the right coset space $\mathbf{\Gamma}\backslash SE(d)$ which is normalized with respect to Weil's formula (\ref{TH.m}). 

The notion of convolution of functions on $SE(d)$ by functions on the right coset space $\mathbf{\Gamma}\backslash SE(d)$ is introduced.  We then discuses 
properties of non-Abelian Fourier series for approximating the convolution functions on the right coset space $\mathbf{\Gamma}\backslash SE(d)$. 
As applications for non-Abelian Fourier series of convolution functions, we conclude the paper by some non-Abelian Plancherel type formulas for functions on the right coset space $\mathbf{\Gamma}\backslash SE(d)$. 

\subsection{Convolution Functions on $\mathbf{\Gamma}\backslash SE(d)$}
The mathematical theory of convolution function algebras on coset spaces of compact subgroups investigated in \cite{AGHF.BBMSS, AGHF.IJM} and references therein.  This theory can be applied for convolution integrals on right coset space of compact subgroups in $SE(d)$ which is not the case for $\mathbf{\Gamma}\backslash SE(d)$ if $\mathbf{\Gamma}$ is not a finite subgroup.  We here extend the notion of convolution for functions on the right coset space of $\mathbf{\Gamma}\backslash SE(d)$.

Suppose $f\in L^1(SE(d))$ and $\psi\in L^1(\mathbf{\Gamma}\backslash SE(d),\mu)$. We then define the convolution of $f$ with $\psi$ as the function $\psi\oslash f:\mathbf{\Gamma}\backslash SE(d)\to\mathbb{C}$ via 
\begin{equation}\label{oslash}
(\psi\oslash f)(\mathbf{\Gamma} g):=\int_{SE(d)} \psi(\mathbf{\Gamma} h)f(h^{-1}\circ g)\dd h,
\end{equation}
for $g\in SE(d)$.

Then, using (\ref{oslash}),  we get 
\[
(\psi\oslash f)(\mathbf{\Gamma} g)=\int_{SE(d)} \psi(\mathbf{\Gamma} g\circ h)f(h^{-1})\dd h.
\] 
So,  we conclude that 
\[
\mathbf{\Gamma} g\mapsto \int_{SE(d)}\psi(\mathbf{\Gamma} h)f(h^{-1}\circ g)\dd h,
\]
is well-defined as a function on the right coset space $\mathbf{\Gamma}\backslash SE(d)$.

Next we discuss analytical aspects of the convolution (\ref{oslash}) and shall show that the Banach function space $L^p(\mathbf{\Gamma}\backslash SE(d),\mu)$ equipped with the module action of $L^1(SE(d))$ defined by (\ref{oslash}) is a Banach function module. 

To this end, we first prove the norm property of the convolution (\ref{oslash}). 

\begin{theorem}\label{conv.norm}
Let $p\ge 1$.  Suppose $f\in L^1(SE(d))$ and $\psi\in L^p(\mathbf{\Gamma}\backslash SE(d),\mu)$. Then, $\psi\oslash f\in L^p(\mathbf{\Gamma}\backslash SE(d),\mu)$ with 
\[
\|\psi\oslash f\|_{L^p(\mathbf{\Gamma}\backslash SE(d),\mu)}\le \|f\|_{L^1(SE(d))}\|\psi\|_{L^p(\mathbf{\Gamma}\backslash SE(d),\mu)}.
\]
\end{theorem}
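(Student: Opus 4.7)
The plan is to establish this as a Young-type inequality for the module action of $L^{1}(SE(d))$ on $L^{p}(\mathbf{\Gamma}\backslash SE(d),\mu)$, by rewriting $\psi\oslash f$ as a vector-valued integral of right translates of $\psi$ and then combining Minkowski's integral inequality with the $SE(d)$-invariance of $\mu$ and the unimodularity of $SE(d)$.

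First, I would rewrite the convolution in the form already derived in the excerpt, namely
\[
(\psi\oslash f)(\mathbf{\Gamma} g)=\int_{SE(d)}\psi(\mathbf{\Gamma} g\circ h)f(h^{-1})\,\dd h=\int_{SE(d)}\psi_{h}(\mathbf{\Gamma} g)\,f(h^{-1})\,\dd h,
\]
where $\psi_{h}(\mathbf{\Gamma} g):=\psi(\mathbf{\Gamma} g\circ h)$ denotes the right $h$-translate on $\mathbf{\Gamma}\backslash SE(d)$. This expresses $\psi\oslash f$ as an average, with respect to the finite complex measure $f(h^{-1})\dd h$, of the orbit $h\mapsto\psi_{h}\in L^{p}(\mathbf{\Gamma}\backslash SE(d),\mu)$.

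Next, for $1\le p<\infty$ I would apply Minkowski's integral inequality to interchange the $L^{p}(\mathbf{\Gamma}\backslash SE(d),\mu)$-norm with the integration over $SE(d)$, yielding
\[
\|\psi\oslash f\|_{L^{p}(\mathbf{\Gamma}\backslash SE(d),\mu)}\le \int_{SE(d)}\|\psi_{h}\|_{L^{p}(\mathbf{\Gamma}\backslash SE(d),\mu)}\,|f(h^{-1})|\,\dd h.
\]
Since $\mu$ is $SE(d)$-invariant, $\|\psi_{h}\|_{L^{p}(\mathbf{\Gamma}\backslash SE(d),\mu)}=\|\psi\|_{L^{p}(\mathbf{\Gamma}\backslash SE(d),\mu)}$ for every $h\in SE(d)$, and the unimodularity of $SE(d)$ gives $\int_{SE(d)}|f(h^{-1})|\,\dd h=\|f\|_{L^{1}(SE(d))}$. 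Combining these identities produces the desired bound. The case $p=\infty$, if included in $p\ge 1$, reduces to a direct pointwise estimate via the triangle inequality.

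The main technical obstacle will be the measurability bookkeeping needed to legitimately apply Minkowski's inequality and to ensure that $(\psi\oslash f)(\mathbf{\Gamma} g)$ is defined for $\mu$-a.e.\ $\mathbf{\Gamma} g$. Concretely, one must verify that $(h,\mathbf{\Gamma} g)\mapsto \psi(\mathbf{\Gamma} g\circ h)f(h^{-1})$ is jointly measurable on $SE(d)\times \mathbf{\Gamma}\backslash SE(d)$, which follows from continuity of the group action, measurability of the quotient projection, and measurability of $f$. Tonelli's theorem applied to $|\psi|\oslash |f|$ then shows that $\int_{SE(d)}|\psi(\mathbf{\Gamma} g\circ h)f(h^{-1})|\,\dd h<\infty$ for $\mu$-a.e.\ $\mathbf{\Gamma} g$, which simultaneously guarantees the almost everywhere finiteness of $\psi\oslash f$ and justifies the application of Minkowski's inequality.
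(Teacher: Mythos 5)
Your proposal is correct and follows essentially the same route as the paper: rewrite $(\psi\oslash f)(\mathbf{\Gamma} g)=\int_{SE(d)}\psi(\mathbf{\Gamma} g\circ h)f(h^{-1})\,\dd h$, apply Minkowski's integral inequality, and then use the $SE(d)$-invariance of $\mu$ together with unimodularity to obtain the bound $\|f\|_{L^1(SE(d))}\|\psi\|_{L^p(\mathbf{\Gamma}\backslash SE(d),\mu)}$. Your added remarks on joint measurability and the Tonelli argument for a.e.\ finiteness are sound extra care that the paper leaves implicit.
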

\begin{proof}
Using Minkowski's integral inequality,  we obtain  
\begin{align*}
\|\psi\oslash f\|_{L^p(\mathbf{\Gamma}\backslash SE(d),\mu)}
&=\left(\int_{\mathbf{\Gamma}\backslash SE(d)}\left|\int_{SE(d)} \psi(\mathbf{\Gamma} g\circ h)f(h^{-1})\dd h\right|^p\dd\mu(\mathbf{\Gamma} g)\right)^{1/p}
\\&\le\int_{SE(d)}\left(\int_{\mathbf{\Gamma}\backslash SE(d)} \left|\psi(\mathbf{\Gamma} g\circ h)f(h^{-1})\right|^p\dd\mu(\mathbf{\Gamma} g)\right)^{1/p}\dd h
\\&=\int_{SE(d)}|f(h^{-1})|\left(\int_{\mathbf{\Gamma}\backslash SE(d)} \left|\psi(\mathbf{\Gamma} g\circ h)\right|^p\dd\mu(\mathbf{\Gamma} g)\right)^{1/p}\dd h.
\end{align*}
Since $\mu$ is $SE(d)$-invariant, we have 
\begin{equation*}
\int_{\mathbf{\Gamma}\backslash SE(d)} \left|\psi(\mathbf{\Gamma} g\circ h)\right|^p\dd\mu(\mathbf{\Gamma} g)=\int_{\mathbf{\Gamma}\backslash SE(d)} \left|\psi(\mathbf{\Gamma} g)\right|^p\dd\mu(\mathbf{\Gamma} g\circ h^{-1})=\int_{\mathbf{\Gamma}\backslash SE(d)} \left|\psi(\mathbf{\Gamma} g)\right|^p\dd\mu(\mathbf{\Gamma} g),
\end{equation*}
for every $h\in SE(d)$. Therefore, we get 
\begin{align*}
\|\psi\oslash f\|_{L^p(\mathbf{\Gamma}\backslash SE(d),\mu)}
&=\int_{SE(d)}|f(h^{-1})|\left(\int_{\mathbf{\Gamma}\backslash SE(d)} \left|\psi(\mathbf{\Gamma} g\circ h)\right|^p\dd\mu(\mathbf{\Gamma} g)\right)^{1/p}\dd h
\\&=\|\psi\|_{L^p(\mathbf{\Gamma}\backslash SE(d),\mu)}\left(\int_{SE(d)}|f(h^{-1})|\dd h\right)=\|f\|_{L^1(SE(d))}\|\psi\|_{L^p(\mathbf{\Gamma}\backslash SE(d),\mu)}.
\end{align*}
\end{proof}

\begin{remark}
Let $p\ge 1$.  Suppose $f\in L^1(SE(d))$ and $\psi\in L^p(\mathbf{\Gamma}\backslash SE(d),\mu)$.  Then Theorem \ref{conv.norm} guarantees that the convolution integral in (\ref{oslash}) converges absolutely for $\mu$-almost every $\mathbf{\Gamma} g\in\mathbf{\Gamma}\backslash SE(d)$. 
\end{remark}

\begin{proposition}
{\it Let $p\ge 1$.  Suppose $f\in L^1(SE(d))$ and $\psi\in L^p(\mathbf{\Gamma}\backslash SE(d),\mu)$.  Then 
\begin{equation}
(\psi\oslash f)(\mathbf{\Gamma} g)=\int_{\mathbf{\Gamma}\backslash SE(d)}\psi(\mathbf{\Gamma} h)\left(\sum_{\gamma\in\mathbf{\Gamma}} f(h^{-1}\circ\gamma^{-1}\circ g)\right)\dd\mu(\mathbf{\Gamma} h).
\end{equation}
}\end{proposition}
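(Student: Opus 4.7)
The plan is to apply Weil's formula (\ref{TH.m}) to the integrand of the convolution, treating it as a function of $h \in SE(d)$ for fixed $g$, and then exploit the fact that $\psi$ descends from $SE(d)$ to the coset space (i.e. is left-$\mathbf{\Gamma}$-invariant in the sense that $\psi(\mathbf{\Gamma}\gamma\circ h)=\psi(\mathbf{\Gamma} h)$ for every $\gamma\in\mathbf{\Gamma}$) in order to pull $\psi(\mathbf{\Gamma} h)$ out of the Weil summation. This is the natural way to convert the defining $SE(d)$-integral (\ref{oslash}) into an integral over the right coset space.

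First I would fix $g\in SE(d)$ such that the convolution integral in (\ref{oslash}) converges absolutely; by Theorem \ref{conv.norm} and a standard Fubini argument, this holds for $\mu$-almost every $\mathbf{\Gamma} g\in\mathbf{\Gamma}\backslash SE(d)$. For such $g$, define the auxiliary function $F_g:SE(d)\to\IC$ by
\[
F_g(h):=\psi(\mathbf{\Gamma} h)\,f(h^{-1}\circ g),
\]
which belongs to $L^1(SE(d))$ by the above choice of $g$. Then Weil's formula (\ref{TH.m}) yields
\[
(\psi\oslash f)(\mathbf{\Gamma} g)=\int_{SE(d)}F_g(h)\,\dd h=\int_{\mathbf{\Gamma}\backslash SE(d)}\widetilde{F_g}(\mathbf{\Gamma} h)\,\dd\mu(\mathbf{\Gamma} h),
\]
where the right-hand side invokes the fibre-wise summation of (\ref{5.1}).

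The next step is to identify $\widetilde{F_g}(\mathbf{\Gamma} h)$ explicitly. Expanding the definition gives
\[
\widetilde{F_g}(\mathbf{\Gamma} h)=\sum_{\gamma\in\mathbf{\Gamma}}F_g(\gamma\circ h)=\sum_{\gamma\in\mathbf{\Gamma}}\psi(\mathbf{\Gamma}\gamma\circ h)\,f((\gamma\circ h)^{-1}\circ g).
\]
Since $\psi$ is a function on the right coset space, we have $\psi(\mathbf{\Gamma}\gamma\circ h)=\psi(\mathbf{\Gamma} h)$, and $(\gamma\circ h)^{-1}\circ g = h^{-1}\circ\gamma^{-1}\circ g$. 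Factoring $\psi(\mathbf{\Gamma} h)$ out of the sum (which is permissible because the series converges absolutely for almost every $\mathbf{\Gamma} h$, as guaranteed by $F_g\in L^1(SE(d))$ together with Fubini) yields
\[
\widetilde{F_g}(\mathbf{\Gamma} h)=\psi(\mathbf{\Gamma} h)\sum_{\gamma\in\mathbf{\Gamma}}f(h^{-1}\circ\gamma^{-1}\circ g),
\]
and substituting this back into the displayed equality above immediately gives the claimed formula.

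The only delicate point is the absolute integrability of $F_g$ and the absolute convergence of the inner sum, which are both handled once we restrict to the almost-every $\mathbf{\Gamma} g$ allowed by Theorem \ref{conv.norm}; the rest of the argument is purely formal manipulation using the left-$\mathbf{\Gamma}$-invariance of $\psi$ on $SE(d)$ and Weil's formula. I therefore expect the proof to be short and essentially notational once this integrability issue is addressed at the outset.
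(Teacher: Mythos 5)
Your proposal is correct and follows essentially the same route as the paper: apply Weil's formula to $h\mapsto\psi(\mathbf{\Gamma} h)f(h^{-1}\circ g)$, use the left-$\mathbf{\Gamma}$-invariance $\psi(\mathbf{\Gamma}\gamma\circ h)=\psi(\mathbf{\Gamma} h)$, and factor $\psi(\mathbf{\Gamma} h)$ out of the fibre-wise sum. Your explicit attention to the almost-everywhere absolute convergence via Theorem \ref{conv.norm} is a welcome refinement that the paper's proof leaves implicit, but it does not change the argument.
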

\begin{proof}
Let $f\in L^1(SE(d))$ and $\psi\in L^1(\mathbf{\Gamma}\backslash SE(d),\mu)$. Let $g\in SE(d)$. Using Weil's formula, we can write 
\begin{align*}
(\psi\oslash f)(\mathbf{\Gamma} g)
&=\int_{SE(d)} \psi(\mathbf{\Gamma} h)f(h^{-1}g)\dd h
\\&=\int_{\mathbf{\Gamma}\backslash SE(d)}\left(\sum_{\gamma\in\mathbf{\Gamma}}\psi(\mathbf{\Gamma} \gamma\circ h) f((\gamma\circ h)^{-1}\circ g)\right)\dd\mu(\mathbf{\Gamma} h)
\\&=\int_{\mathbf{\Gamma}\backslash SE(d)}\left(\sum_{\gamma\in\mathbf{\Gamma}}\psi(\mathbf{\Gamma} h)f((\gamma\circ h)^{-1}\circ g)\right)\dd\mu(\mathbf{\Gamma} h)
\\&=\int_{\mathbf{\Gamma}\backslash SE(d)}\psi(\mathbf{\Gamma} h)\left(\sum_{\gamma\in\mathbf{\Gamma}} f(h^{-1}\circ\gamma^{-1}\circ g)\right)\dd\mu(\mathbf{\Gamma} h).
\end{align*}
\end{proof}

\begin{proposition}\label{tilde.conv}
{\it Let $f_k\in L^1(SE(d))$ with $k\in\{1,2\}$.  Then  
\begin{equation}
\widetilde{f_1\star f_2}=\widetilde{f_1}\oslash f_2.
\end{equation}
}\end{proposition}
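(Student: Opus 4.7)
The plan is to unfold both sides starting from their definitions and match them via a change of variables together with an interchange of sum and integral. First I would recall that $f_1\star f_2\in L^1(SE(d))$ by Young's inequality for $L^1(SE(d))$, so the periodization $\widetilde{f_1\star f_2}$ is well-defined by (\ref{5.1}) for a.e. $\mathbf{\Gamma} g$. Similarly, since $\widetilde{f_1}\in L^1(\mathbf{\Gamma}\backslash SE(d),\mu)$ with $\|\widetilde{f_1}\|_{L^1}\le\|f_1\|_{L^1}$ by Weil's formula, Theorem \ref{conv.norm} (with $p=1$) yields that $\widetilde{f_1}\oslash f_2\in L^1(\mathbf{\Gamma}\backslash SE(d),\mu)$, so the right-hand side is also defined a.e.

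Next I would expand the left-hand side using the definition of convolution on $SE(d)$:
\[
\widetilde{f_1\star f_2}(\mathbf{\Gamma} g)=\sum_{\gamma\in\mathbf{\Gamma}}(f_1\star f_2)(\gamma\circ g)=\sum_{\gamma\in\mathbf{\Gamma}}\int_{SE(d)}f_1(h)f_2(h^{-1}\circ\gamma\circ g)\dd h.
\]
Then, for each $\gamma\in\mathbf{\Gamma}$, I would perform the substitution $h\mapsto\gamma\circ h$, which preserves the Haar measure by unimodularity (and left-invariance) of $SE(d)$. This transforms the $\gamma$-term into
\[
\int_{SE(d)}f_1(\gamma\circ h)f_2(h^{-1}\circ g)\dd h.
\]

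After this substitution, the key step is to interchange the sum over $\mathbf{\Gamma}$ with the integral over $SE(d)$ to produce the periodization $\widetilde{f_1}(\mathbf{\Gamma} h)$ inside the integral. This is the one non-trivial point and will be justified by Tonelli's theorem applied to $(h,\gamma)\mapsto |f_1(\gamma\circ h)f_2(h^{-1}\circ g)|$, since
\[
\int_{SE(d)}\sum_{\gamma\in\mathbf{\Gamma}}|f_1(\gamma\circ h)||f_2(h^{-1}\circ g)|\dd h=\int_{SE(d)}\widetilde{|f_1|}(\mathbf{\Gamma} h)|f_2(h^{-1}\circ g)|\dd h,
\]
and the latter is finite for a.e. $\mathbf{\Gamma} g$ again by Theorem \ref{conv.norm} applied to $\widetilde{|f_1|}\oslash|f_2|\in L^1(\mathbf{\Gamma}\backslash SE(d),\mu)$. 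Once Fubini is justified, swapping yields
\[
\widetilde{f_1\star f_2}(\mathbf{\Gamma} g)=\int_{SE(d)}\left(\sum_{\gamma\in\mathbf{\Gamma}}f_1(\gamma\circ h)\right)f_2(h^{-1}\circ g)\dd h=\int_{SE(d)}\widetilde{f_1}(\mathbf{\Gamma} h)f_2(h^{-1}\circ g)\dd h,
\]
which is exactly $(\widetilde{f_1}\oslash f_2)(\mathbf{\Gamma} g)$ by (\ref{oslash}).

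The main obstacle is the Fubini/Tonelli justification for exchanging the sum with the integral; everything else is a direct computation relying on the invariance of Haar measure and the definition of the periodization map.
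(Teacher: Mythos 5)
Your proof follows essentially the same route as the paper's: expand the periodized convolution, substitute $h\mapsto\gamma\circ h$ using invariance of the Haar measure, and interchange the sum over $\mathbf{\Gamma}$ with the integral to recognize $\widetilde{f_1}(\mathbf{\Gamma} h)$ inside. The only difference is that you explicitly justify the interchange via Tonelli (using Theorem \ref{conv.norm} for $\widetilde{|f_1|}\oslash|f_2|$), a step the paper performs without comment, so your argument is correct and slightly more careful.
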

\begin{proof}
Let $g\in SE(d)$ be given.  Then    
\begin{align*}
\widetilde{(f_1\star f_2)}(\mathbf{\Gamma} g)
&=\sum_{\gamma\in\mathbf{\Gamma}}(f_1\star f_2)(\gamma\circ g)
\\&=\sum_{\gamma\in\mathbf{\Gamma}}\left(\int_{SE(d)}f_1(h)f_2(h^{-1}\circ\gamma\circ g)\dd h\right)
\\&=\sum_{\gamma\in\mathbf{\Gamma}}\left(\int_{SE(d)}f_1(\gamma\circ h)f_2(h^{-1}\circ g)\dd h\right)
\\&=\int_{SE(d)}\left(\sum_{\gamma\in\mathbf{\Gamma}}f_1(\gamma\circ h)\right)f_2(h^{-1}\circ g)\dd h
\\&=\int_{SE(d)}\widetilde{f_1}(\mathbf{\Gamma} h)f_2(h^{-1}\circ g)\dd h=(\widetilde{f_1}\oslash f_2)(\mathbf{\Gamma} g).
\end{align*}
\end{proof}
 
 \begin{corollary}
{\it Suppose $p\ge 1$ and $\psi\in L^p(\mathbf{\Gamma}\backslash SE(d),\mu)$. 
Let $f_k\in L^1(SE(d))$ with $k\in\{1,2\}$.  Then 
\[
\psi\oslash(f_1\star f_2)=(\psi\oslash f_1)\oslash f_2.
\] 
} \end{corollary}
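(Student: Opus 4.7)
The plan is to verify the identity by a direct Fubini computation combined with a left-translation change of variables in the Haar integral on $SE(d)$. Unfolding the definitions and substituting the defining integral of $f_1\star f_2$, for $g\in SE(d)$ I would begin with
\[
(\psi\oslash(f_1\star f_2))(\mathbf{\Gamma} g)=\int_{SE(d)}\psi(\mathbf{\Gamma} h)\left(\int_{SE(d)}f_1(k)f_2(k^{-1}\circ h^{-1}\circ g)\,\dd k\right)\dd h.
\]

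Before interchanging the order of the two integrals I would justify absolute integrability: applying Theorem \ref{conv.norm} to the non-negative data $|\psi|\in L^p(\mathbf{\Gamma}\backslash SE(d),\mu)$ and $|f_1|\star|f_2|\in L^1(SE(d))$, together with the Banach algebra inequality $\||f_1|\star|f_2|\|_{L^1(SE(d))}\le\|f_1\|_{L^1(SE(d))}\|f_2\|_{L^1(SE(d))}$, one obtains that the iterated integral with moduli is finite for $\mu$-a.e.\ $\mathbf{\Gamma} g$. This is precisely the Tonelli/Fubini hypothesis needed to exchange orders.

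Next I would perform the substitution $k\mapsto h^{-1}\circ k$ in the inner integral. Since $SE(d)$ is unimodular, the Haar measure is left-invariant, so $\dd k$ is preserved, and the product $k^{-1}\circ h^{-1}$ collapses to $k^{-1}$. The integrand therefore becomes $\psi(\mathbf{\Gamma} h)f_1(h^{-1}\circ k)f_2(k^{-1}\circ g)$. Invoking Fubini in the opposite direction, the inner $h$-integral is exactly $(\psi\oslash f_1)(\mathbf{\Gamma} k)$ by the definition of $\oslash$, and the remaining $k$-integral is then $((\psi\oslash f_1)\oslash f_2)(\mathbf{\Gamma} g)$, again by definition.

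The only delicate step is the Fubini justification, but it is already controlled by Theorem \ref{conv.norm}; everything else is a clean use of left-invariance of Haar measure, and in particular no quotient subtlety intervenes because $\psi(\mathbf{\Gamma} h)$ is being integrated against a function on $SE(d)$ rather than pushed down to $\mathbf{\Gamma}\backslash SE(d)$. Conceptually, the corollary is the associativity axiom that completes the description of $L^p(\mathbf{\Gamma}\backslash SE(d),\mu)$ as a right Banach $L^1(SE(d))$-module under $\oslash$ that was initiated by Theorem \ref{conv.norm}.
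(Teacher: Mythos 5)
Your proof is correct, but it follows a genuinely different route from the paper's. The paper does not compute any integrals: it first uses finiteness of $\mu$ to get $L^p(\mathbf{\Gamma}\backslash SE(d),\mu)\subseteq L^1(\mathbf{\Gamma}\backslash SE(d),\mu)$, then lifts $\psi$ to a function $f\in L^1(SE(d))$ with $\widetilde{f}=\psi$ (surjectivity of the periodization map), and reduces the claim to associativity of the group convolution in $L^1(SE(d))$ by applying Proposition \ref{tilde.conv} twice, namely $\psi\oslash(f_1\star f_2)=\widetilde{f\star (f_1\star f_2)}=\widetilde{(f\star f_1)\star f_2}=(\psi\oslash f_1)\oslash f_2$. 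You instead argue directly from the definition of $\oslash$: unfold the double integral, change variables $k\mapsto h^{-1}\circ k$ (this uses only left-invariance of Haar measure, which holds regardless of unimodularity — your phrasing suggests unimodularity is needed, which it is not, though this is harmless), and interchange the integrals, with the Tonelli/Fubini hypothesis supplied by Theorem \ref{conv.norm} applied to $|\psi|$ and $|f_1|\star|f_2|$. The paper's argument is shorter given Proposition \ref{tilde.conv}, but it silently relies on the fact that every $\psi\in L^1(\mathbf{\Gamma}\backslash SE(d),\mu)$ is of the form $\widetilde{f}$ for some $f\in L^1(SE(d))$ (only the continuous, compactly supported case is stated in the preliminaries); your computation is self-contained, avoids that lifting step, yields the identity pointwise $\mu$-a.e., and uses only the norm estimate already established. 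One small point worth making explicit in your write-up: $\psi\oslash f_1\in L^p(\mathbf{\Gamma}\backslash SE(d),\mu)$ (again Theorem \ref{conv.norm}), so the right-hand side is well defined before you identify it with the inner integral.
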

\begin{proof}
Since the right cost space $\mathbf{\Gamma}\backslash SE(d)$ is compact and hence has finite volume, we have $L^p(\mathbf{\Gamma}\backslash SE(d),\mu)\subseteq L^1(\mathbf{\Gamma}\backslash SE(d),\mu)$. Let $f\in L^1(SE(d))$ such that $\widetilde{f}=\psi$. Using Proposition \ref{tilde.conv} and associativity of the group convolution integral in $L^1(SE(d))$,  
we have 
\begin{align*}
\psi\oslash(f_1\star f_2)
&=\widetilde{f}\oslash(f_1\star f_2)=\widetilde{f\star(f_1\star f_2)}
\\&=\widetilde{(f\star f_1)\star f_2}=\widetilde{f\star f_1}\oslash f_2=(\psi\oslash f_1)\oslash f_2.
\end{align*}
 \end{proof}

\subsection{Non-Abelian Fourier Series of Convolutions}
We then present some constructive expansions for the coefficients of convolution of $L^1(SE(d))$ on $L^1(\mathbf{\Gamma}\backslash SE(d),\mu)$ in the $L^2$-sense.

\begin{theorem}\label{Main.Coff.Conv.K}
Let $\mathbf{\Gamma}$ be a discrete co-compact subgroup of $SE(d)$ and 
$\Omega$ be a fundamental domain of $\mathbf{\Gamma}$ in $SE(d)$. Suppose 
$\varphi\in L^2(\mathbf{\Gamma}\backslash SE(d),\mu)$.  Let $f_k\in L^1\cap L^2(SE(d))$ with $k\in\{1,2\}$ such that $\widetilde{f_1}\oslash f_2\in L^2(\mathbf{\Gamma}\backslash SE(d),\mu)$.  Then
\begin{equation}\label{CoffConv}
\langle \widetilde{f_1}\oslash f_2,\varphi\rangle=C(d)\sum_{\gamma\in\mathbf{\Gamma}}\int_0^\infty\mathrm{tr}\left[\widehat{f_2}(p)\widehat{f_1}(p)Q_{\gamma\Omega}^{\overline{\varphi}}(p)\right]p^{d-1}\dd p,
\end{equation}
\end{theorem}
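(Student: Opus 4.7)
The plan is to reduce the statement directly to Theorem \ref{Main} by rewriting the convolution $\widetilde{f_1}\oslash f_2$ as the $\mathbf{\Gamma}$-periodization of an $SE(d)$-convolution, and then using the convolution identity for the unified non-Abelian Fourier transform.

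First, I would invoke Proposition \ref{tilde.conv} to obtain the identification
\[
\widetilde{f_1}\oslash f_2 = \widetilde{f_1\star f_2}.
\]
To apply Theorem \ref{Main} to $f_1\star f_2$ with the test function $\varphi$, I need $f_1\star f_2 \in L^1\cap L^2(SE(d))$ and $\widetilde{f_1\star f_2}\in L^2(\mathbf{\Gamma}\backslash SE(d),\mu)$. The last condition is immediate from the hypothesis $\widetilde{f_1}\oslash f_2\in L^2(\mathbf{\Gamma}\backslash SE(d),\mu)$ together with Proposition \ref{tilde.conv}. The $L^1$ membership follows from $\|f_1\star f_2\|_{L^1(SE(d))}\le\|f_1\|_{L^1(SE(d))}\|f_2\|_{L^1(SE(d))}$, and the $L^2$ membership follows from Young's convolution inequality since $f_1\in L^1(SE(d))$ and $f_2\in L^2(SE(d))$, giving $\|f_1\star f_2\|_{L^2(SE(d))}\le\|f_1\|_{L^1(SE(d))}\|f_2\|_{L^2(SE(d))}$.

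Next, applying Theorem \ref{Main} to $f_1\star f_2$, I get
\[
\langle\widetilde{f_1\star f_2},\varphi\rangle = C(d)\sum_{\gamma\in\mathbf{\Gamma}}\int_0^\infty \mathrm{tr}\!\left[\widehat{(f_1\star f_2)}(p)\,Q_{\gamma\Omega}^{\overline{\varphi}}(p)\right]p^{d-1}\,\dd p.
\]
Then I would substitute the convolution identity (\ref{hatconvp}), namely $\widehat{(f_1\star f_2)}(p)=\widehat{f_2}(p)\widehat{f_1}(p)$, into the integrand. Combining this with the previous identification yields exactly the claimed formula (\ref{CoffConv}).

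The main obstacle, which is mild here, is making sure the operator-valued trace integrand is well-defined and that the exchange with $\sum_\gamma$ is legitimate; but since we are literally invoking Theorem \ref{Main} as a black box applied to the single function $f_1\star f_2\in L^1\cap L^2(SE(d))$, this is already handled by that theorem. Thus the proof is essentially a two-line composition: Proposition \ref{tilde.conv} turns $\widetilde{f_1}\oslash f_2$ into a periodized $SE(d)$-convolution, Theorem \ref{Main} expresses its inner product against $\varphi$ as a trace integral involving $\widehat{(f_1\star f_2)}(p)$, and the Fourier convolution identity (\ref{hatconvp}) factors this Fourier transform as $\widehat{f_2}(p)\widehat{f_1}(p)$.
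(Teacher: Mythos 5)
Your proposal is correct and follows essentially the same route as the paper's own proof: set $f=f_1\star f_2$, verify $f\in L^1\cap L^2(SE(d))$ via Young's inequality (the paper cites Theorem 2.39 of Folland for exactly this), use Proposition \ref{tilde.conv} to identify $\widetilde{f_1}\oslash f_2=\widetilde{f_1\star f_2}$, apply Theorem \ref{Main}, and factor $\widehat{(f_1\star f_2)}(p)=\widehat{f_2}(p)\widehat{f_1}(p)$ by (\ref{hatconvp}). No substantive differences.
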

\begin{proof}
Let $f:=f_1\star f_2$.  Then $f\in L^1(SE(d))$.  Since $f_2\in L^2(SE(d))$ and using Theorem 2.39 of \cite{FollH}, we get $f\in L^2(SE(d))$ and hence $f\in L^1\cap L^2(SE(d))$.  In addition,  Proposition \ref{tilde.conv} implies that $\widetilde{f}\in L^2(\mathbf{\Gamma}\backslash SE(d),\mu)$.   Therefore,  applying Theorem \ref{Main} for $f$, we obtain 
\begin{align*}
\langle \widetilde{f_1}\oslash f_2,\varphi\rangle&=\langle f,\varphi\rangle
\\&=C(d)\sum_{\gamma\in\mathbf{\Gamma}}\int_0^\infty\mathrm{tr}\left[\widehat{f}(p)Q_{\gamma\Omega}^{\overline{\varphi}}(p)\right]p^{d-1}\dd p
\\&=C(d)\sum_{\gamma\in\mathbf{\Gamma}}\int_0^\infty\mathrm{tr}\left[\widehat{f_1\star f_2}(p)Q_{\gamma\Omega}^{\overline{\varphi}}(p)\right]p^{d-1}\dd p
=C(d)\sum_{\gamma\in\mathbf{\Gamma}}\int_0^\infty\mathrm{tr}\left[\widehat{f_2}(p)\widehat{f_1}(p)Q_{\gamma\Omega}^{\overline{\varphi}}(p)\right]p^{d-1}\dd p.
\end{align*}
\end{proof}

\begin{corollary}
{\it Let $\mathbf{\Gamma}$ be a discrete co-compact subgroup of $SE(d)$. 
Suppose $K$ is a compact subset of $SE(d)$ and 
$\varphi\in L^2(\mathbf{\Gamma}\backslash SE(d))$. 
Let $f_k:SE(d)\to\mathbb{C}$ with $k\in\{1,2\}$ be continuous functions supported in $B_k\subset SE(d)$ such that $B_1\circ B_2\subset K$.  Then 
\begin{equation}
\langle \widetilde{f_1}\oslash f_2,\varphi\rangle=C(d)\int_0^\infty\mathrm{tr}\left[\widehat{f_2}(p)\widehat{f_1}(p)Q_K^{\overline{\varphi}}(p)\right]p^{d-1}\dd p,
\end{equation}
}\end{corollary}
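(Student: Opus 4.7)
The plan is to reduce this corollary to Proposition \ref{main.K}, the compactly-supported version of the inner-product formula, via the convolution identities already established in the paper. First I would set $f := f_1 \star f_2$. The key preliminary observation is that, reading off the defining integral of convolution, the integrand vanishes unless $h \in B_1$ and $g \in h \circ B_2$; hence $f$ is continuous with $\mathrm{supp}(f) \subset B_1 \circ B_2 \subset K$.

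With this support statement in hand, Proposition \ref{main.K} applies to $f$ and $\varphi$ and delivers
\[
\langle \widetilde{f}, \varphi \rangle = C(d) \int_0^\infty \mathrm{tr}\left[ \widehat{f}(p) Q_K^{\overline{\varphi}}(p) \right] p^{d-1} \dd p.
\]
The remaining step is algebraic substitution: I would replace $\widetilde{f} = \widetilde{f_1 \star f_2}$ by $\widetilde{f_1} \oslash f_2$ using Proposition \ref{tilde.conv}, and replace $\widehat{f}(p)$ by $\widehat{f_2}(p) \widehat{f_1}(p)$ using the unified convolution formula (\ref{hatconvp}). Inserting these two identities into the displayed equation yields exactly the stated formula.

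I do not anticipate a significant obstacle, since this result is essentially a specialization of Theorem \ref{Main.Coff.Conv.K} in which the sum over $\gamma \in \mathbf{\Gamma}$ collapses thanks to the compact-support hypothesis on the $f_k$. The only matter of care is ensuring that $K$ is expressible as a finite union $\bigcup_{k=1}^n \gamma_k \Omega$ required by Proposition \ref{main.K}; this follows from the compactness of $K$ together with the locally finite cover of $SE(d)$ by the translates $\gamma \Omega$, so if necessary one may enlarge $K$ to such a finite union without disturbing the inclusion $\mathrm{supp}(f) \subset K$.
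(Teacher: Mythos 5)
Your overall route is the intended one (the corollary is just Theorem \ref{Main.Coff.Conv.K} with the $\gamma$-sum collapsed, and your reduction $f:=f_1\star f_2$, $\mathrm{supp}(f)\subset B_1\circ B_2\subset K$, $\widetilde{f}=\widetilde{f_1}\oslash f_2$ by Proposition \ref{tilde.conv}, $\widehat{f}(p)=\widehat{f_2}(p)\widehat{f_1}(p)$ by (\ref{hatconvp}) is exactly right), but your last step has a genuine gap. Proposition \ref{main.K} applies only when $K$ is \emph{exactly} a finite union $\bigcup_{k=1}^n\gamma_k\Omega$, and an arbitrary compact $K\supset B_1\circ B_2$ need not have this form. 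Your remedy --- enlarge $K$ to such a union $K'$ --- proves the identity with $Q_{K'}^{\overline{\varphi}}(p)$ on the right-hand side, which is not the stated conclusion: the corollary asserts the formula for the \emph{given} $K$, and nothing in the cited statements lets you pass from $Q_{K'}^{\overline{\varphi}}$ back to $Q_{K}^{\overline{\varphi}}$. The two operators differ by $Q_{K'\setminus K}^{\overline{\varphi}}(p)$, and while $C(d)\int_0^\infty\mathrm{tr}\bigl[\widehat{f}(p)Q_{K'\setminus K}^{\overline{\varphi}}(p)\bigr]p^{d-1}\dd p=0$ is true (because $f$ vanishes off $K$), that vanishing is itself a Plancherel-type computation of the kind used inside the proof of Theorem \ref{Main}; it is not a formal consequence of Proposition \ref{main.K} or Theorem \ref{Main.Coff.Conv.K} as stated, so as written you have proved a different identity than the one claimed.

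The clean fix is to bypass the enlargement altogether and run the Theorem \ref{Main} argument with $K$ in place of $\gamma\Omega$: set $\varphi_K(g):=E_K(g)\varphi(\mathbf{\Gamma} g)$. Since $K$ is compact it meets only finitely many translates $\gamma\Omega$, so $\varphi_K\in L^1\cap L^2(SE(d))$ exactly as in Lemma \ref{12.Lem}; the polarized Plancherel formula (\ref{Radial.SEd}) gives $C(d)\int_0^\infty\mathrm{tr}\bigl[\widehat{f}(p)\widehat{\varphi_K}(p)^*\bigr]p^{d-1}\dd p=\int_{SE(d)}f(g)\overline{\varphi_K(g)}\dd g=\int_K f(g)\overline{\varphi(\mathbf{\Gamma} g)}\dd g$, while $\widehat{\varphi_K}(p)^*=Q_K^{\overline{\varphi}}(p)$; and since $\mathrm{supp}(f)\subset K$, the last integral equals $\int_{SE(d)}f(g)\overline{\varphi(\mathbf{\Gamma} g)}\dd g=\langle\widetilde{f},\varphi\rangle$ by Weil's formula (\ref{TH.m}). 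Combining this with your two substitutions yields the stated formula for the given $K$; the rest of your argument (continuity and support of $f_1\star f_2$, membership in $L^1\cap L^2$, $\widetilde{f}\in L^2$ since it is continuous on the compact space $\mathbf{\Gamma}\backslash SE(d)$) is correct.
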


Next we conclude some reconstruction expansions including non-Abelian Fourier coefficients for the convolution of $L^1(SE(d))$ on $L^1(\mathbf{\Gamma}\backslash SE(d),\mu)$ in the $L^2$-sense.

\begin{proposition}
{\it Let $\mathbf{\Gamma}$ be a discrete co-compact  subgroup of $SE(d)$ and $\mathcal{E}(\mathbf{\Gamma}):=(\psi_\ell)_{\ell\in\mathbb{I}}$ be an orthonormal basis for the Hilbert function space $L^2(\mathbf{\Gamma}\backslash SE(d),\mu)$.  Let $f_k\in L^1\cap L^2(SE(d))$ with $k\in\{1,2\}$ such that $\widetilde{f_1}\oslash f_2\in L^2(\mathbf{\Gamma}\backslash SE(d),\mu)$.  Then 
\begin{equation}\label{FourierInvQ0ConvL2.K}
\widetilde{f_1}\oslash f_2=C(d)\sum_{\ell\in\mathbb{I}}\left(\sum_{\gamma\in\mathbf{\Gamma}}\int_0^\infty\mathrm{tr}\left[\widehat{f_2}(p)\widehat{f_1}(p)Q_{\gamma\Omega}^\ell(p)\right]p^{d-1}\dd p\right)\psi_\ell,
\end{equation}
}\end{proposition}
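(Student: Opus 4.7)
The plan is to derive the expansion directly from the orthonormal basis representation in $L^2(\mathbf{\Gamma}\backslash SE(d),\mu)$ combined with the inner-product formula for convolutions established in Theorem \ref{Main.Coff.Conv.K}. Since $\widetilde{f_1}\oslash f_2\in L^2(\mathbf{\Gamma}\backslash SE(d),\mu)$ by hypothesis and $(\psi_\ell)_{\ell\in\mathbb{I}}$ is an orthonormal basis, Parseval's identity in Hilbert space yields the convergent series
\begin{equation*}
\widetilde{f_1}\oslash f_2=\sum_{\ell\in\mathbb{I}}\langle\widetilde{f_1}\oslash f_2,\psi_\ell\rangle\psi_\ell
\end{equation*}
in the $L^2$-sense.

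Next, I would compute each coefficient $\langle\widetilde{f_1}\oslash f_2,\psi_\ell\rangle$ by invoking Theorem \ref{Main.Coff.Conv.K} with the test function $\varphi:=\psi_\ell$. The hypotheses of that theorem hold: $f_k\in L^1\cap L^2(SE(d))$ for $k\in\{1,2\}$, and $\widetilde{f_1}\oslash f_2\in L^2(\mathbf{\Gamma}\backslash SE(d),\mu)$ by assumption, while $\psi_\ell\in L^2(\mathbf{\Gamma}\backslash SE(d),\mu)$ since it belongs to the orthonormal basis. The conclusion of Theorem \ref{Main.Coff.Conv.K} gives exactly
\begin{equation*}
\langle\widetilde{f_1}\oslash f_2,\psi_\ell\rangle=C(d)\sum_{\gamma\in\mathbf{\Gamma}}\int_0^\infty\mathrm{tr}\left[\widehat{f_2}(p)\widehat{f_1}(p)Q_{\gamma\Omega}^{\overline{\psi_\ell}}(p)\right]p^{d-1}\dd p,
\end{equation*}
and by the notational convention $Q_{\gamma\Omega}^\ell(p)=Q_{\gamma\Omega}^{\overline{\psi_\ell}}(p)$ introduced earlier in Theorem \ref{MainRecTh0.L2.K}, this is precisely the inner bracketed expression in \eqref{FourierInvQ0ConvL2.K}.

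Substituting this computed coefficient back into the Parseval expansion yields the desired formula. There is no serious obstacle here: the statement is a mechanical composition of the $L^2$ orthonormal expansion with the inner-product formula already proved for convolutions. The only item worth noting is that interchange of the summation over $\ell$ with the $\gamma$-sum and $p$-integral is not needed, since convergence of the outer $\ell$-series is automatic in $L^2(\mathbf{\Gamma}\backslash SE(d),\mu)$ and each coefficient is evaluated as a scalar via Theorem \ref{Main.Coff.Conv.K} before the sum is assembled.
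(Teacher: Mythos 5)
Your proposal is correct and is exactly the argument the paper intends: the paper states this proposition without proof as the immediate combination of the orthonormal expansion of $\widetilde{f_1}\oslash f_2$ in $L^2(\mathbf{\Gamma}\backslash SE(d),\mu)$ with the coefficient formula of Theorem \ref{Main.Coff.Conv.K} applied to $\varphi=\psi_\ell$, precisely mirroring the proof of Theorem \ref{MainRecTh0.L2.K}. Your identification $Q_{\gamma\Omega}^{\overline{\psi_\ell}}(p)=Q_{\gamma\Omega}^\ell(p)$ and the remark that no interchange of sums is needed are both accurate.
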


\begin{corollary}
{\it Let $\mathbf{\Gamma}$ be a discrete co-compact  subgroup of $SE(d)$ and $\mathcal{E}(\mathbf{\Gamma}):=(\psi_\ell)_{\ell\in\mathbb{I}}$ be an orthonormal basis for the Hilbert function space $L^2(\mathbf{\Gamma}\backslash SE(d),\mu)$.  Suppose $K$ is a given compact subset of $SE(d)$.  Let $f_k:SE(d)\to\mathbb{C}$ with $k\in\{1,2\}$ be continuous functions supported in $B_k\subset SE(d)$ such that $B_1\circ B_2\subset K$.  Then
\begin{equation}
\widetilde{f_1}\oslash f_2=C(d)\sum_{\ell\in\mathbb{I}}\left(\int_0^\infty\mathrm{tr}\left[\widehat{f_2}(p)\widehat{f_1}(p)Q_K^\ell(p)\right]p^{d-1}\dd p\right)\psi_\ell.
\end{equation}
}\end{corollary}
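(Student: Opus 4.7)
The plan is to obtain this corollary as a direct specialization of the preceding proposition (equation~\ref{FourierInvQ0ConvL2.K}) together with the support-collapse argument used in Proposition~\ref{main.K}. An equivalent and slightly more streamlined route is to combine the orthonormal basis expansion in $L^2(\mathbf{\Gamma}\backslash SE(d),\mu)$ with the compactly supported pairing formula stated in the corollary immediately preceding this one.

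First I would verify that the hypotheses of the preceding proposition are in force. Since $f_1,f_2$ are continuous with compact supports $B_1,B_2$, both lie in $L^1\cap L^2(SE(d))$; the group convolution $f_1\star f_2$ is then continuous and supported in $B_1\circ B_2\subset K$, hence also in $L^1\cap L^2(SE(d))$. By Proposition~\ref{tilde.conv} we have $\widetilde{f_1}\oslash f_2=\widetilde{f_1\star f_2}$, and as the right periodization of a continuous, compactly supported function on $SE(d)$, this is continuous on the compact coset space $\mathbf{\Gamma}\backslash SE(d)$, in particular it belongs to $L^2(\mathbf{\Gamma}\backslash SE(d),\mu)$.

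Next, I expand $\widetilde{f_1}\oslash f_2$ in the orthonormal basis $(\psi_\ell)_{\ell\in\mathbb{I}}$ and compute each Fourier coefficient by the preceding compactly supported pairing corollary with $\varphi=\psi_\ell$. Since $Q_K^{\overline{\psi_\ell}}(p)=Q_K^\ell(p)$ by definition, this gives
\[
\langle\widetilde{f_1}\oslash f_2,\psi_\ell\rangle=C(d)\int_0^\infty\mathrm{tr}\left[\widehat{f_2}(p)\widehat{f_1}(p)Q_K^\ell(p)\right]p^{d-1}\dd p,
\]
and substituting back into the basis expansion produces the claimed identity as an $L^2$-convergent series. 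If one instead starts from the preceding proposition, the same reduction is accomplished inside the expansion by observing that the terms indexed by $\gamma\notin\mathbb{J}$ contribute zero (because $\widehat{f_2}(p)\widehat{f_1}(p)=\widehat{(f_1\star f_2)}(p)$ by \eqref{hatconvp}, and $f_1\star f_2$ vanishes on $\gamma\Omega$ when $\gamma\Omega\cap K=\emptyset$), and then invoking the additivity $\sum_{k=1}^n Q_{\gamma_k\Omega}^\ell(p)=Q_K^\ell(p)$ inside the trace.

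No substantial obstacle is expected: the entire argument amounts to plugging the preceding (compactly supported) pairing corollary into the orthonormal basis expansion, which is justified because $(\psi_\ell)$ is an orthonormal basis and $\widetilde{f_1}\oslash f_2\in L^2(\mathbf{\Gamma}\backslash SE(d),\mu)$. The only bookkeeping point is the identification $Q_K^{\overline{\psi_\ell}}=Q_K^\ell$, which is immediate from the defining integral; otherwise the result is a formal corollary of what has already been established.
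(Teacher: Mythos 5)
Your proposal is correct and takes essentially the same route the paper intends: the corollary is stated there without proof as the specialization of the preceding proposition, and your argument---check that $f_1\star f_2$ is continuous, supported in $B_1\circ B_2\subset K$, hence in $L^1\cap L^2(SE(d))$ with $\widetilde{f_1}\oslash f_2=\widetilde{f_1\star f_2}\in L^2(\mathbf{\Gamma}\backslash SE(d),\mu)$, then expand in the orthonormal basis and evaluate each coefficient via the compactly supported pairing corollary with $\varphi=\psi_\ell$ and $Q_K^{\overline{\psi_\ell}}(p)=Q_K^\ell(p)$---is exactly that specialization. (Only your alternative collapse remark implicitly needs $K=\bigcup_k\gamma_k\Omega$ for $\sum_kQ_{\gamma_k\Omega}^\ell=Q_K^\ell$, but your primary route does not rely on it.)
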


\subsection{Absolutely Convergent Fourier Series of Convolutions on $\mathbf{\Gamma}\backslash SE(d)$}

We then deduce the following constructive expansions for the convolution of 
$L^1(SE(d))$ on $L^1(\mathbf{\Gamma}\backslash SE(d),\mu)$ in the almost everywhere and pointwise senses. 

\begin{theorem}
Let $\mathbf{\Gamma}$ be a discrete co-compact  subgroup of $SE(d)$ and $\mathcal{E}(\mathbf{\Gamma}):=(\psi_\ell)_{\ell\in\mathbb{I}}$ be an orthonormal basis of bounded functions for the Hilbert function space $L^2(\mathbf{\Gamma}\backslash SE(d),\mu)$.  Let $f_k\in L^1\cap L^2(SE(d))$ with $k\in\{1,2\}$ such that $\widetilde{f_1}\oslash f_2\in\mathcal{A}(\mathcal{E})$.  Then,  for $\mu$-a.e.  $\mathbf{\Gamma} g\in\mathbf{\Gamma}\backslash SE(d)$, we have 
\begin{equation}\label{FourierInvQ0.conv.point}
(\widetilde{f_1}\oslash f_2)(\mathbf{\Gamma} g)=C(d)\sum_{\ell\in\mathbb{I}}\left(\sum_{\gamma\in\mathbf{\Gamma}}\int_0^\infty\mathrm{tr}\left[\widehat{f_2}(p)\widehat{f_1}(p)Q_{\gamma\Omega}^\ell(p)\right]p^{d-1}\dd p\right)\psi_\ell(\mathbf{\Gamma} g),
\end{equation}
\end{theorem}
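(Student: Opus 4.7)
The plan is to combine two previous results: the absolute-convergence expansion from Theorem \ref{Main.A.E} applied to the function $\widetilde{f_1}\oslash f_2$, and the trace formula from Theorem \ref{Main.Coff.Conv.K} for the Fourier coefficients of a convolution. All the substantive analytic content has already been established upstream, so the argument reduces to verifying that the hypotheses of those two theorems are met and then stringing them together.

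First I would note that, by the very definition of $\mathcal{A}(\mathcal{E})$, we have $\mathcal{A}(\mathcal{E})\subseteq L^2(\mathbf{\Gamma}\backslash SE(d),\mu)$, so the hypothesis $\widetilde{f_1}\oslash f_2\in\mathcal{A}(\mathcal{E})$ in particular says $\widetilde{f_1}\oslash f_2\in L^2(\mathbf{\Gamma}\backslash SE(d),\mu)$. Consequently Theorem \ref{Main.A.E}(2), applied to $\psi:=\widetilde{f_1}\oslash f_2$, gives the pointwise expansion
\[
(\widetilde{f_1}\oslash f_2)(\mathbf{\Gamma} g)=\sum_{\ell\in\mathbb{I}}\langle \widetilde{f_1}\oslash f_2,\psi_\ell\rangle\psi_\ell(\mathbf{\Gamma} g)
\]
for $\mu$-a.e.\ $\mathbf{\Gamma} g\in\mathbf{\Gamma}\backslash SE(d)$.

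Next, because $f_1,f_2\in L^1\cap L^2(SE(d))$ and $\widetilde{f_1}\oslash f_2\in L^2(\mathbf{\Gamma}\backslash SE(d),\mu)$, the hypotheses of Theorem \ref{Main.Coff.Conv.K} hold with $\varphi:=\psi_\ell$ for each $\ell\in\mathbb{I}$. Applying that theorem to each $\psi_\ell$ yields
\[
\langle \widetilde{f_1}\oslash f_2,\psi_\ell\rangle=C(d)\sum_{\gamma\in\mathbf{\Gamma}}\int_0^\infty\mathrm{tr}\left[\widehat{f_2}(p)\widehat{f_1}(p)Q_{\gamma\Omega}^\ell(p)\right]p^{d-1}\dd p,
\]
where $Q_{\gamma\Omega}^\ell=Q_{\gamma\Omega}^{\overline{\psi_\ell}}$ in the notation of the earlier sections. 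Substituting this expression for each coefficient $\langle \widetilde{f_1}\oslash f_2,\psi_\ell\rangle$ into the almost-everywhere expansion from the previous paragraph yields formula (\ref{FourierInvQ0.conv.point}).

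There is essentially no hard step; the argument is just a bookkeeping combination of two already-proved theorems. The only mild subtlety is making sure one invokes each theorem with hypotheses that are actually satisfied --- in particular, observing that membership in $\mathcal{A}(\mathcal{E})$ automatically supplies the $L^2$ regularity needed to unlock Theorem \ref{Main.Coff.Conv.K}, and that the resulting scalar coefficients can be substituted termwise into the uniformly convergent series of Theorem \ref{Main.A.E}(1) without disturbing the $\mu$-a.e.\ equality.
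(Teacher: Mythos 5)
Your proof is correct and follows essentially the same route as the paper: apply Theorem \ref{Main.A.E}(2) to $\psi:=\widetilde{f_1}\oslash f_2$ (using that $\mathcal{A}(\mathcal{E})\subseteq L^2(\mathbf{\Gamma}\backslash SE(d),\mu)$) and then substitute the coefficient formula of Theorem \ref{Main.Coff.Conv.K} with $\varphi=\psi_\ell$. No gaps; your remarks on verifying the hypotheses match what the paper leaves implicit.
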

\begin{proof}
Using Theorem \ref{Main.A.E}(2), we have 
\[
(\widetilde{f_1}\oslash f_2)(\mathbf{\Gamma} g)=\sum_{\ell\in\mathbb{I}}\langle\widetilde{f_1}\oslash f_2,\psi_\ell\rangle\psi_\ell(\mathbf{\Gamma} g),
\]
for $\mu$-a.e.  $\mathbf{\Gamma} g\in\mathbf{\Gamma}\backslash SE(d)$.  Invoking Equation (\ref{CoffConv}), we achieve 
\[
\langle\widetilde{f_1}\oslash f_2,\psi_\ell\rangle=C(d)\sum_{\gamma\in\mathbf{\Gamma}}\int_0^\infty\mathrm{tr}\left[\widehat{f_2}(p)\widehat{f_1}(p)Q_{\gamma\Omega}^\ell(p)\right]p^{d-1}\dd p.
\]
\end{proof}

\begin{proposition}
{\it Let $\mathbf{\Gamma}$ be a discrete co-compact subgroup of $SE(d)$ and $\mathcal{E}(\mathbf{\Gamma}):=(\psi_\ell)_{\ell\in\mathbb{I}}$ be an orthonormal basis
of continues functions for the Hilbert function space $L^2(\mathbf{\Gamma}\backslash SE(d),\mu)$.  Suppose $K$ is a given compact subset of $SE(d)$. Let $f_k:SE(d)\to\mathbb{C}$ with $k\in\{1,2\}$ be continuous functions supported in $B_k\subset SE(d)$ such that $B_1\circ B_2\subset K$ and $\widetilde{f_1}\oslash f_2\in\mathcal{A}(\mathcal{E})$. Then, for every $\mathbf{\Gamma} g\in\mathbf{\Gamma}\backslash SE(d)$, we have 
\begin{equation}
(\widetilde{f_1}\oslash f_2)(\mathbf{\Gamma} g)=C(d)\sum_{\ell\in\mathbb{I}}\left(\int_0^\infty\mathrm{tr}\left[\widehat{f_2}(p)\widehat{f_1}(p)Q_K^\ell(p)\right]p^{d-1}\dd p\right)\psi_\ell(\mathbf{\Gamma} g),
\end{equation}
}\end{proposition}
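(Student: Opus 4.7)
The plan is to reduce the statement to a direct application of results already established in the paper. First, I set $f := f_1 \star f_2$. Since $f_1$ and $f_2$ are continuous functions with supports contained in $B_1$ and $B_2$ respectively, $f$ is a continuous function on $SE(d)$ whose support is contained in $B_1 \circ B_2 \subset K$; in particular, $f \in \mathcal{C}_c(SE(d))$ with $\mathrm{supp}(f) \subset K$.

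Next, by Proposition \ref{tilde.conv}, we have $\widetilde{f_1}\oslash f_2 = \widetilde{f_1 \star f_2} = \widetilde{f}$. Since $f \in \mathcal{C}_c(SE(d))$, the sum in (\ref{5.1}) is locally finite (because $\mathbf{\Gamma}$ is discrete and $\mathrm{supp}(f)$ is compact), so $\widetilde{f}$ is continuous on $\mathbf{\Gamma}\backslash SE(d)$. Combined with the hypothesis $\widetilde{f_1}\oslash f_2 \in \mathcal{A}(\mathcal{E})$, Corollary \ref{main.cts} then yields the pointwise identity
\[
(\widetilde{f_1}\oslash f_2)(\mathbf{\Gamma} g) = \sum_{\ell \in \mathbb{I}} \langle \widetilde{f_1}\oslash f_2, \psi_\ell \rangle \, \psi_\ell(\mathbf{\Gamma} g),
\]
valid for every $\mathbf{\Gamma} g \in \mathbf{\Gamma}\backslash SE(d)$.

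It then remains to identify the Fourier coefficients. Applying Proposition \ref{main.K} to the continuous compactly supported function $f$ with test function $\varphi = \psi_\ell$ gives
\[
\langle \widetilde{f_1}\oslash f_2, \psi_\ell \rangle \;=\; \langle \widetilde{f}, \psi_\ell \rangle \;=\; C(d)\int_0^\infty \mathrm{tr}\!\left[\widehat{f}(p)\, Q_K^\ell(p)\right] p^{d-1}\, \dd p,
\]
and substituting the group convolution formula (\ref{hatconvp}), namely $\widehat{f}(p) = \widehat{f_1 \star f_2}(p) = \widehat{f_2}(p)\widehat{f_1}(p)$, produces exactly the asserted expression. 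Combining the two displays completes the argument.

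Since the proof is essentially an assembly of previously established statements, there is no genuinely new technical difficulty. The one step needing minor care is the continuity of $\widetilde{f}$, which follows from the standard local-finiteness observation for sums of $\mathbf{\Gamma}$-translates of a compactly supported continuous function on a locally compact group containing a discrete subgroup. A secondary subtlety is that Proposition \ref{main.K} is formally phrased under the hypothesis $K = \bigcup_{k=1}^n \gamma_k\Omega$; if the set $K$ given here does not already have that form, one enlarges it to such a finite (still compact) union of $\mathbf{\Gamma}$-translates of $\Omega$ containing $\mathrm{supp}(f)$, and the resulting formula is unaffected.
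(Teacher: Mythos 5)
Your proposal is correct and is essentially the paper's own route: the paper states this proposition without a separate proof, and its Section 4 analogue is proved exactly as you do — write $f=f_1\star f_2\in\mathcal{C}_c(SE(d))$ with $\mathrm{supp}(f)\subset K$, use Proposition \ref{tilde.conv} to get $\widetilde{f_1}\oslash f_2=\widetilde{f}\in\mathcal{C}(\mathbf{\Gamma}\backslash SE(d))$, apply Corollary \ref{main.cts}, and identify the coefficients via the compactly supported coefficient formula together with $\widehat{f}(p)=\widehat{f_2}(p)\widehat{f_1}(p)$. One small caution about your final remark: enlarging $K$ to $K'=\bigcup_{k}\gamma_k\Omega$ replaces $Q_K^\ell(p)$ by $Q_{K'}^\ell(p)$, so to land on the stated formula with the original $K$ you should add the one-line observation that $\int_0^\infty\mathrm{tr}\left[\widehat{f}(p)Q_{K'\setminus K}^\ell(p)\right]p^{d-1}\,\dd p=\int_{K'\setminus K}f(g)\overline{\psi_\ell(\mathbf{\Gamma} g)}\,\dd g=0$ because $f$ vanishes off $K$ (equivalently, rerun the Plancherel--polarization step of Theorem \ref{Main} directly with the compact set $K$, using that $g\mapsto E_K(g)\psi_\ell(\mathbf{\Gamma} g)$ lies in $L^1\cap L^2(SE(d))$); this is the same looseness already implicit in the paper's statement for arbitrary compact $K$.
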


\begin{corollary}
{\it Let $\mathbf{\Gamma}$ be a discrete co-compact subgroup of $SE(d)$ and $\mathcal{E}(\mathbf{\Gamma}):=(\psi_\ell)_{\ell\in\mathbb{I}}$ be an orthonormal basis of continuous functions for the Hilbert function space $L^2(\mathbf{\Gamma}\backslash SE(d),\mu)$.  Suppose $\Omega$ is a fundamental domain of $\mathbf{\Gamma}$ in $SE(d)$. Let $f_k:SE(d)\to\mathbb{C}$ with $k\in\{1,2\}$ be continuous functions supported in $B_k\subset SE(d)$ such that $B_1\circ B_2\subset\Omega$ and $\widetilde{f_1}\oslash f_2\in\mathcal{A}(\mathcal{E})$. Then, for every $\omega\in\Omega$, we have 
\begin{equation}\label{FourierInvQ0.conv.p.F}
(f_1\star f_2)(\omega)=C(d)\sum_{\ell\in\mathbb{I}}\left(\int_0^\infty\mathrm{tr}\left[\widehat{f_2}(p)\widehat{f_1}(p)Q_\Omega^\ell(p)\right]p^{d-1}\dd p\right)\psi_\ell(\mathbf{\Gamma} \omega),
\end{equation}
}\end{corollary}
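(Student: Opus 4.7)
The plan is to reduce this corollary to the preceding proposition by exploiting the support condition $B_1\circ B_2\subset\Omega$. The overall strategy is to identify $(f_1\star f_2)(\omega)$ with $(\widetilde{f_1}\oslash f_2)(\mathbf{\Gamma}\omega)$ when $\omega$ lies in the fundamental domain, then invoke the previous proposition with $K=\Omega$.

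First I would record that $\widetilde{f_1}\oslash f_2=\widetilde{f_1\star f_2}$ by Proposition \ref{tilde.conv}. Then I would observe that $\mathrm{supp}(f_1\star f_2)\subset B_1\circ B_2\subset\Omega$: this is the standard support property of group convolution, using that $f_1\star f_2$ vanishes unless the integration variable $h$ lies in $B_1$ and $h^{-1}\circ g\in B_2$. Consequently $K:=\Omega$ (which is not necessarily compact, but the effective support is) qualifies for application of the preceding proposition; one should briefly note that the hypothesis of the preceding proposition is really used through the fact that $f_1\star f_2$ is supported in $\Omega$, so the sum over $\gamma$ in Theorem \ref{Main.Coff.Conv.K} collapses to the single term $\gamma=e$, justifying the use of $Q^\ell_\Omega(p)$ instead of $\sum_\gamma Q^\ell_{\gamma\Omega}(p)$.

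Next I would evaluate $\widetilde{f_1\star f_2}$ at the coset $\mathbf{\Gamma}\omega$ for $\omega\in\Omega$. By definition,
\[
\widetilde{f_1\star f_2}(\mathbf{\Gamma}\omega)=\sum_{\gamma\in\mathbf{\Gamma}}(f_1\star f_2)(\gamma\circ\omega).
\]
Since $f_1\star f_2$ is supported in $\Omega$ and $\Omega$ is a fundamental domain of $\mathbf{\Gamma}$, the translate $\gamma\circ\omega$ lies in $\Omega$ only for $\gamma$ equal to the identity (this is where the interior character of the fundamental domain matters; one may either restrict to the interior or recall that fundamental domains are defined up to measure-zero overlap, and continuity of $f_1\star f_2$ together with the hypothesis $B_1\circ B_2\subset\Omega$ gives the identification for every $\omega\in\Omega$). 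Thus $\widetilde{f_1\star f_2}(\mathbf{\Gamma}\omega)=(f_1\star f_2)(\omega)$.

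Finally, I would apply the preceding proposition (with $K=\Omega$ and the support condition $B_1\circ B_2\subset\Omega$) to rewrite $(\widetilde{f_1}\oslash f_2)(\mathbf{\Gamma}\omega)$ as the displayed non-Abelian Fourier series involving $Q_\Omega^\ell(p)$, and combine with the identification from the previous paragraph to conclude the formula for $(f_1\star f_2)(\omega)$. The main subtlety I would expect is the careful handling of the fundamental domain at its boundary when identifying $\widetilde{f_1\star f_2}(\mathbf{\Gamma}\omega)$ with $(f_1\star f_2)(\omega)$; apart from that, the argument is a direct chaining of Proposition \ref{tilde.conv} with the preceding pointwise convolution proposition.
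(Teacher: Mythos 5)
Your proposal is correct and follows the route the paper intends for this corollary (which it leaves unproved): identify $\widetilde{f_1}\oslash f_2=\widetilde{f_1\star f_2}$ via Proposition \ref{tilde.conv}, use $\mathrm{supp}(f_1\star f_2)\subset B_1\circ B_2\subset\Omega$ so that the $\gamma$-sum collapses to the identity term and only $Q_\Omega^\ell(p)$ survives, apply the preceding pointwise proposition, and recover $(f_1\star f_2)(\omega)=\widetilde{f_1\star f_2}(\mathbf{\Gamma}\omega)$ from the fundamental-domain property. Your handling of the side issues (compactness of the effective support, boundary of $\Omega$) matches what the paper implicitly assumes, so no gap remains.
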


\subsection{{\bf Plancherel Formula}}

We conclude this section by presenting Plancherel type formulas associated to the non-Abelian Fourier series on $\mathbf{\Gamma}\backslash SE(d)$.

The following result presents the canonical connection of $L^2$-norms on the right coset space $\mathbf{\Gamma}\backslash SE(d)$ with convolution of functions. 

\begin{proposition}\label{plan.alt}
{\it Let $f\in L^1(SE(d))$ with $\widetilde{f}\in L^2(\mathbf{\Gamma}\backslash SE(d),\mu)$ be given.  Then
\[
\|\widetilde{f}\|_{L^2(\mathbf{\Gamma}\backslash SE(d),\mu)}^2=(\widetilde{f}\oslash f^*)(\mathbf{\Gamma}).
\]
}\end{proposition}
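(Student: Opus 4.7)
The plan is to unfold the definition of the convolution $\oslash$ at the identity coset $\mathbf{\Gamma}$, recognize an integrand whose $\mathbf{\Gamma}$-periodization is $|\widetilde{f}|^2$, and then invoke Weil's formula (\ref{TH.m}).

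First I would evaluate
\[
(\widetilde{f}\oslash f^{*})(\mathbf{\Gamma})=\int_{SE(d)}\widetilde{f}(\mathbf{\Gamma} h)\,f^{*}(h^{-1})\,\dd h,
\]
using the definition (\ref{oslash}) at $g=e$. Since $SE(d)$ is unimodular, the standard involution satisfies $f^{*}(h^{-1})=\overline{f(h)}$, so the integral becomes $\int_{SE(d)}\widetilde{f}(\mathbf{\Gamma} h)\,\overline{f(h)}\,\dd h$.

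Next I would define $F:SE(d)\to\mathbb{C}$ by $F(g):=\widetilde{f}(\mathbf{\Gamma} g)\,\overline{f(g)}$ and compute its $\mathbf{\Gamma}$-periodization. Because $\widetilde{f}$ is by construction constant on each right coset $\mathbf{\Gamma} g$, one has
\[
\widetilde{F}(\mathbf{\Gamma} g)=\sum_{\gamma\in\mathbf{\Gamma}}\widetilde{f}(\mathbf{\Gamma}\gamma\circ g)\,\overline{f(\gamma\circ g)}=\widetilde{f}(\mathbf{\Gamma} g)\sum_{\gamma\in\mathbf{\Gamma}}\overline{f(\gamma\circ g)}=\widetilde{f}(\mathbf{\Gamma} g)\,\overline{\widetilde{f}(\mathbf{\Gamma} g)}=|\widetilde{f}(\mathbf{\Gamma} g)|^{2}.
\]
This is the key algebraic step: the periodization factors because one of the two factors is already periodic.

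Finally, applying Weil's formula (\ref{TH.m}) to $F$ yields
\[
\int_{SE(d)}\widetilde{f}(\mathbf{\Gamma} h)\,\overline{f(h)}\,\dd h=\int_{\mathbf{\Gamma}\backslash SE(d)}|\widetilde{f}(\mathbf{\Gamma} g)|^{2}\dd\mu(\mathbf{\Gamma} g)=\|\widetilde{f}\|_{L^{2}(\mathbf{\Gamma}\backslash SE(d),\mu)}^{2},
\]
which combined with the first step gives the identity. The only technical point to verify is that $F\in L^{1}(SE(d))$ so that Weil's formula applies; this follows from the hypotheses, since $\widetilde{f}\in L^{2}(\mathbf{\Gamma}\backslash SE(d),\mu)$ (so $\widetilde{f}\in L^{1}(\mathbf{\Gamma}\backslash SE(d),\mu)$ by finiteness of $\mu$) together with $f\in L^{1}(SE(d))$ and the monotone/Fubini argument underlying Weil's formula applied to $|F|$, where $\widetilde{|F|}(\mathbf{\Gamma} g)=|\widetilde{f}(\mathbf{\Gamma} g)|\,\widetilde{|f|}(\mathbf{\Gamma} g)$, which one controls by rewriting it as $|\widetilde{f}(\mathbf{\Gamma} g)|^{2}$ after passing to $\overline{f}$ via the same periodization trick. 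The main, and essentially only, conceptual obstacle is recognizing that evaluating the convolution at the distinguished coset $\mathbf{\Gamma}$ produces precisely the periodization responsible for the $L^{2}$-norm squared.
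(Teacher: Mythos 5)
Your argument is correct and is essentially the paper's own proof read in the opposite direction: the paper starts from $\|\widetilde{f}\|_{L^2(\mathbf{\Gamma}\backslash SE(d),\mu)}^2$, uses the coset-invariance of $\widetilde{f}$ to factor $|\widetilde{f}(\mathbf{\Gamma} h)|^2$ as the $\mathbf{\Gamma}$-periodization of $h\mapsto\widetilde{f}(\mathbf{\Gamma} h)\overline{f(h)}$, and then unfolds by Weil's formula and the identity $f^*(h^{-1})=\overline{f(h)}$, exactly the steps you perform starting from $(\widetilde{f}\oslash f^*)(\mathbf{\Gamma})$. Your closing remarks on integrability are a welcome (if slightly informal) addition, since note that $\widetilde{|F|}=|\widetilde{f}|\,\widetilde{|f|}$ need not equal $|\widetilde{f}|^2$; the paper itself leaves this point implicit.
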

\begin{proof}
Suppose $f\in L^1(SE(d))$ with $\widetilde{f}\in L^2(\mathbf{\Gamma}\backslash SE(d),\mu)$.  Then,  using Weils formula,  we get  
\begin{align*}
\|\widetilde{f}\|_{L^2(\mathbf{\Gamma}\backslash SE(d),\mu)}^2&=\int_{\mathbf{\Gamma}\backslash SE(d)}\widetilde{f}(\mathbf{\Gamma} h)\overline{\widetilde{f}(\mathbf{\Gamma} h)}\dd\mu(\mathbf{\Gamma} h)
\\&=\int_{\mathbf{\Gamma}\backslash SE(d)}\sum_{\gamma\in\mathbf{\Gamma}}\widetilde{f}(\mathbf{\Gamma}\gamma\circ h)\overline{f(\gamma\circ h)}\dd\mu(\mathbf{\Gamma} h)
\\&=\int_{SE(d)}\widetilde{f}(\mathbf{\Gamma} h)\overline{f(h)}\dd h
=\int_{SE(d)}\widetilde{f}(\mathbf{\Gamma} h)f^*(h^{-1})\dd h=(\widetilde{f}\oslash f^*)(\mathbf{\Gamma}).
\end{align*}
\end{proof}

We here finish the section by the following general form of Plancherel formula associated to the non-Abelian Fourier series of continuous functions on $\mathbf{\Gamma}\backslash SE(d)$.

\begin{theorem}\label{plan.general}
Let $\mathbf{\Gamma}$ be a discrete co-compact subgroup of $SE(d)$ and $\mathcal{E}(\mathbf{\Gamma}):=(\psi_\ell)_{\ell\in\mathbb{I}}$ be an orthogonal basis of continues functions for the Hilbert function space $L^2(\mathbf{\Gamma}\backslash SE(d),\mu)$. 
Suppose $K$ is a given compact subset of $SE(d)$. Let $f:SE(d)\to\mathbb{C}$ be a given continuous function supported in $B$ with $B\circ B^{-1}\subset K$ and $\widetilde{f}\oslash f^*\in\mathcal{A}(\mathcal{E})$.  Then  
\begin{equation}\label{FourierInvQ0plan}
\|\widetilde{f}\|_{L^2(\mathbf{\Gamma}\backslash SE(d),\mu)}^2=C(d)\sum_{\ell\in\mathbb{I}}\left(\int_0^\infty\mathrm{tr}\left[|\widehat{f}(p)|^2Q_K^\ell(p)\right]p^{d-1}\dd p\right)\psi_\ell(\mathbf{\Gamma}),
\end{equation}
\end{theorem}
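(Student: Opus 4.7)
The plan is to combine Proposition \ref{plan.alt}, which rewrites the squared norm as a convolution evaluated at the identity coset, with the pointwise expansion for convolutions given just before this theorem. The key observation is that if $f$ is supported in $B$, then $f^*$ (the involution $f^*(g):=\overline{f(g^{-1})}$) is continuous and supported in $B^{-1}$, so the product support satisfies $B\circ B^{-1}\subset K$, exactly matching the hypothesis for the convolution-reconstruction proposition.

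First, I apply Proposition \ref{plan.alt} to write
\[
\|\widetilde{f}\|_{L^2(\mathbf{\Gamma}\backslash SE(d),\mu)}^2=(\widetilde{f}\oslash f^*)(\mathbf{\Gamma}).
\]
Next, I set $f_1:=f$ and $f_2:=f^*$. These are continuous, supported respectively in $B$ and $B^{-1}$, with $B\circ B^{-1}\subset K$. By hypothesis $\widetilde{f_1}\oslash f_2=\widetilde{f}\oslash f^*\in\mathcal{A}(\mathcal{E})$, so the pointwise convolution-reconstruction proposition stated just above applies and yields, for every $\mathbf{\Gamma} g\in\mathbf{\Gamma}\backslash SE(d)$,
\[
(\widetilde{f}\oslash f^*)(\mathbf{\Gamma} g)=C(d)\sum_{\ell\in\mathbb{I}}\left(\int_0^\infty\mathrm{tr}\left[\widehat{f^*}(p)\widehat{f}(p)Q_K^\ell(p)\right]p^{d-1}\dd p\right)\psi_\ell(\mathbf{\Gamma} g).
\]
Evaluating at $g=e$ (the identity of $SE(d)$), so $\mathbf{\Gamma} g=\mathbf{\Gamma}$, delivers the desired formula modulo the identification of $\widehat{f^*}(p)\widehat{f}(p)$ with $|\widehat{f}(p)|^2$.

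To complete this identification, I would compute $\widehat{f^*}(p)$ directly from (\ref{hatp}): substituting $f^*(g)=\overline{f(g^{-1})}$ and performing the change of variables $h=g^{-1}$ (using unimodularity of $SE(d)$) gives
\[
\widehat{f^*}(p)=\int_{SE(d)}\overline{f(h)}\,U_p(h)\,\dd h=\left(\int_{SE(d)}f(h)\,U_p(h)^*\,\dd h\right)^{*}=\widehat{f}(p)^{*}.
\]
Hence $\widehat{f^*}(p)\widehat{f}(p)=\widehat{f}(p)^{*}\widehat{f}(p)=|\widehat{f}(p)|^{2}$, which inserted above yields exactly (\ref{FourierInvQ0plan}).

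The computation is essentially a concatenation of two already-established tools, so there is no serious analytical obstacle. The main thing to be careful about is bookkeeping for the involution identity $\widehat{f^*}(p)=\widehat{f}(p)^{*}$, where the order of factors in the convolution-transform relation (\ref{hatconvp}) is reversed; fortunately, that reversed order is precisely what produces $\widehat{f^*}\widehat{f}=\widehat{f}^{*}\widehat{f}=|\widehat{f}|^{2}$ rather than $\widehat{f}\widehat{f}^{*}$, so the correct absolute-square operator $|\widehat{f}(p)|^2=\widehat{f}(p)^{*}\widehat{f}(p)$ appears automatically.
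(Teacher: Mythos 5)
Your proposal is correct and follows essentially the same route as the paper: Proposition \ref{plan.alt} to reduce the norm to $(\widetilde{f}\oslash f^*)(\mathbf{\Gamma})$, the pointwise convolution expansion with $f_1=f$, $f_2=f^*$ supported in $B$ and $B^{-1}$, and the identity $\widehat{f^*}(p)=\widehat{f}(p)^*$ to obtain $|\widehat{f}(p)|^2$. If anything, you are slightly more careful than the paper, since you invoke the everywhere-valid (continuous) version of the convolution expansion needed to evaluate at the single coset $\mathbf{\Gamma}$ and you verify the involution identity explicitly.
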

\begin{proof}
Applying Equation (\ref{FourierInvQ0.conv.point}) and Proposition \ref{plan.alt}, we obtain  
\begin{align*}
\|\widetilde{f}\|_{L^2(\mathbf{\Gamma}\backslash SE(d),\mu)}^2
&=(\widetilde{f}\oslash f^*)(\mathbf{\Gamma})
\\&=C(d)\sum_{\ell\in\mathbb{I}}\left(\int_0^\infty\mathrm{tr}\left[\widehat{f^*}(p)\widehat{f}(p)Q_K^\ell(p)\right]p^{d-1}\dd p\right)\psi_\ell(\mathbf{\Gamma})
\\&=C(d)\sum_{\ell\in\mathbb{I}}\left(\int_0^\infty\mathrm{tr}\left[\widehat{f}(p)^*\widehat{f}(p)Q_K^\ell(p)\right]p^{d-1}\dd p\right)\psi_\ell(\mathbf{\Gamma})
\\&=C(d)\sum_{\ell\in\mathbb{I}}\left(\int_0^\infty\mathrm{tr}\left[|\widehat{f}(p)|^2Q_K^\ell(p)\right]p^{d-1}\dd p\right)\psi_\ell(\mathbf{\Gamma}).
\end{align*}
\end{proof}

\begin{corollary}\label{plan.F}
{\it Let $\mathbf{\Gamma}$ be a discrete co-compact subgroup of $SE(d)$ and $\mathcal{E}(\mathbf{\Gamma}):=(\psi_\ell)_{\ell\in\mathbb{I}}$ be an orthogonal basis of continues functions for the Hilbert function space $L^2(\mathbf{\Gamma}\backslash SE(d),\mu)$.  Suppose $\Omega$ is a fundamental domain for $\mathbf{\Gamma}$ in $SE(d)$. Let $f:SE(d)\to\mathbb{C}$ be a given continuous function supported in $B$ with $B\circ B^{-1}\subset\Omega$ and $\widetilde{f}\oslash f^*\in\mathcal{A}(\mathcal{E})$.  Then
\begin{equation}\label{FourierInvQ0plan.F}
\|f\|_{L^2(SE(d))}^2=C(d)\sum_{\ell\in\mathbb{I}}\left(\int_0^\infty\mathrm{tr}\left[|\widehat{f}(p)|^2Q_\Omega^\ell(p)\right]p^{d-1}\dd p\right)\psi_\ell(\mathbf{\Gamma}),
\end{equation}
}\end{corollary}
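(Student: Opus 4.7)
The plan is to derive this corollary as a direct specialization of Theorem \ref{plan.general}, taking the compact set $K$ to be the fundamental domain $\Omega$ itself, and then identifying the quotient $L^2$-norm with the ambient $L^2$-norm using the tightness of the support of $f$.

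First, I would apply Theorem \ref{plan.general} with $K := \Omega$. The hypothesis $B \circ B^{-1} \subset \Omega$ is precisely the admissibility requirement of that theorem, the continuity and support data on $f$ transfer verbatim, and $\widetilde{f} \oslash f^{*} \in \mathcal{A}(\mathcal{E})$ is assumed. This gives
\[
\|\widetilde{f}\|_{L^2(\mathbf{\Gamma}\backslash SE(d),\mu)}^2 = C(d)\sum_{\ell\in\mathbb{I}}\left(\int_0^\infty \mathrm{tr}\left[|\widehat{f}(p)|^2 Q_\Omega^\ell(p)\right] p^{d-1}\,\dd p\right)\psi_\ell(\mathbf{\Gamma}),
\]
which already matches the right-hand side of (\ref{FourierInvQ0plan.F}).

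Second, I would identify $\|\widetilde{f}\|_{L^2(\mathbf{\Gamma}\backslash SE(d),\mu)}^2$ with $\|f\|_{L^2(SE(d))}^2$ by exploiting disjointness of translates. The key claim is that for each $g \in SE(d)$, at most one $\gamma\in\mathbf{\Gamma}$ satisfies $\gamma \circ g \in B$: if $\gamma_1 \circ g,\ \gamma_2 \circ g \in B$, then
\[
\gamma_1 \circ \gamma_2^{-1} = (\gamma_1 \circ g)\circ(\gamma_2 \circ g)^{-1} \in B \circ B^{-1} \subset \Omega,
\]
and the defining property of a fundamental domain gives $\mathbf{\Gamma} \cap \Omega = \{e\}$, forcing $\gamma_1 = \gamma_2$. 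Consequently the defining series of $\widetilde{f}(\mathbf{\Gamma} g)$ in (\ref{5.1}) has at most one nonzero term for each $g$, so $|\widetilde{f}(\mathbf{\Gamma} g)|^2 = \sum_{\gamma\in\mathbf{\Gamma}} |f(\gamma \circ g)|^2 = \widetilde{|f|^2}(\mathbf{\Gamma} g)$. Applying Weil's formula (\ref{TH.m}) to $|f|^2 \in L^1(SE(d))$ then yields
\[
\|\widetilde{f}\|_{L^2(\mathbf{\Gamma}\backslash SE(d),\mu)}^2 = \int_{\mathbf{\Gamma}\backslash SE(d)} \widetilde{|f|^2}(\mathbf{\Gamma} g)\,\dd\mu(\mathbf{\Gamma} g) = \int_{SE(d)} |f(g)|^2\,\dd g = \|f\|_{L^2(SE(d))}^2.
\]
Combining the two displays proves (\ref{FourierInvQ0plan.F}).

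The only real subtlety is the pointwise identity $|\widetilde{f}|^2 = \widetilde{|f|^2}$, which rests on the disjointness assertion above and hence on the convention $\mathbf{\Gamma} \cap \Omega = \{e\}$ for a fundamental domain. Once that is made precise, everything else is a direct chaining of Theorem \ref{plan.general} with Weil's formula, with no further analytic obstacles.
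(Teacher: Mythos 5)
Your proof is correct and follows the paper's intended route for this corollary: specialize Theorem \ref{plan.general} to $K=\Omega$ and then identify $\|\widetilde{f}\|_{L^2(\mathbf{\Gamma}\backslash SE(d),\mu)}$ with $\|f\|_{L^2(SE(d))}$ via the disjointness of the translates $\gamma\circ B$ and Weil's formula. One small precision worth adding: $\mathbf{\Gamma}\cap\Omega=\{e\}$ is not automatic for an arbitrary fundamental domain, but here it follows because $e\in B\circ B^{-1}\subset\Omega$ (for $B\neq\emptyset$) and each right coset meets $\Omega$ in exactly one point.
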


\section{\bf{Examples and Conclusions}}

In this section, we discuss different analytical aspects of the presented theory in the case of $d=2$ and $d=3$.

\subsection{The case $\mathbf{\Gamma}\backslash SE(2)$}
The 2D special Euclidean motion group, $SE(2)$, is the semi-direct product of $\mathbb{R}^2$
with the 2D special orthogonal group $SO(2)$. That is,
\[
SE(2)=\mathbb{R}^2\rtimes SO(2)=SO(2)\ltimes\mathbb{R}^2.
\] 
We denote elements $g\in SE(2)$ as $g=(\mathbf{x},\mathbf{R})$ where $\mathbf{x}\in\mathbb{R}^2$ and $\mathbf{R}\in SO(2)$. For any $g=(\mathbf{x},\mathbf{R})$ and 
$g'=(\mathbf{x}',\mathbf{R}')\in SE(2)$ the group law is written as 
\[
g\circ g'=(\mathbf{x}+\mathbf{R}\mathbf{x}',\mathbf{RR}'),
\]
and 
\[
g^{-1}=(-\mathbf{R}^T\mathbf{x},\mathbf{R}^T)=(-\mathbf{R}^{-1}\mathbf{x},\mathbf{R}^{-1}).
\]

The special Euclidean group $SE(2)$, can be viewed as the set of matrices of the form 
\begin{equation}
g(x_1,x_2,\theta):=\left(\begin{array}{ccc}
\cos\theta & -\sin\theta & x_1 \\ 
\sin\theta & \cos\theta & x_2 \\ 
0 & 0 & 1
\end{array}\right), 
\end{equation}
with $\theta\in(0,2\pi]$ and $x_1,x_2\in\mathbb{R}$, or 
\begin{equation}
g(a,\phi,\theta):=\left(\begin{array}{ccc}
\cos\theta & -\sin\theta & a\cos\phi \\ 
\sin\theta & \cos\theta & a\sin\phi \\ 
0 & 0 & 1
\end{array}\right).
\end{equation}
In semi-direct product notations, we may denote it as $(\mathbf{x},\mathbf{R}_\theta)$, where $\mathbf{x}=(x_1,x_2)^T$ and 
\[
\mathbf{R}_\theta=\left(\begin{array}{cc}
\cos\theta & -\sin\theta \\ 
\sin\theta & \cos\theta
\end{array}\right).
\]
The special Euclidean group $SE(2)$ is a unimodular group with the normalized Haar measure given by 
\[
\dd g=\frac{1}{4\pi^2}\dd x_1\dd x_2\dd\theta=\frac{1}{4\pi^2}a\dd a\dd\phi \dd\theta.
\]
The associated unified Parseval type formula on $SE(2)$ reads as 
\[
\int_{SE(2)}|f(g)|^2\dd g=\int_0^\infty\|\widehat{f}(p)\|_{\rm HS}^2p\dd p,
\]
and also the noncommutative Fourier reconstruction formula is given by
\[
f(g)=\int_0^\infty\mathrm{tr}\left[\widehat{f}(p)U_p(g)\right]p\dd p,
\]
for $f\in L^1\cap L^2(SE(2))$ and $g\in SE(2)$,   the linear operator $\widehat{f}(p)$ is given by 
\begin{equation}
\widehat{f}(p):=\int_{SE(2)}f(g)U_p(g^{-1})\D g=\int_{SE(2)}f(g)U_p(g)^*\D g,
\end{equation}
for every $p>0$.   

Suppose that $\mathbf{\Gamma}$ is a discrete and co-compact  subgroup of $SE(2)$ and $\mathcal{E}(\mathbf{\Gamma}):=(\psi_\ell)_{\ell\in\mathbb{I}}$ is an orthonormal basis of bounded functions for the Hilbert function space $L^2(\mathbf{\Gamma}\backslash SE(2),\mu)$.  
Let $f\in L^1\cap L^2(SE(d))$ such that $\widetilde{f}\in L^2(\mathbf{\Gamma}\backslash SE(2),\mu)$. Then  Theorem \ref{MainRecTh0.L2.K} implies that 
\begin{equation}
\widetilde{f}=\sum_{\ell\in\mathbb{I}}\left(\sum_{\gamma\in\mathbf{\Gamma}}\int_0^\infty\mathrm{tr}\left[\widehat{f}(p)Q_{\gamma\Omega}^\ell(p)\right]p\dd p\right)\psi_\ell.
\end{equation}
In addition, if $\widetilde{f}\in\mathcal{A}(\mathcal{E})$ then using Theorem \ref{ABC.Main},  we get 
\[
\widetilde{f}(\mathbf{\Gamma} g)=\sum_{\ell\in\mathbb{I}}\left(\sum_{\gamma\in\mathbf{\Gamma}}\int_0^\infty\mathrm{tr}\left[\widehat{f}(p)Q_{\gamma\Omega}^\ell(p)\right]p\dd p\right)\psi_\ell(\mathbf{\Gamma} g),
\]
for $\mu$-a.e. $\mathbf{\Gamma} g\in \mathbf{\Gamma}\backslash SE(2)$.

\subsection{The case $\mathbf{\Gamma}\backslash SE(3)$}

The 3D special Euclidean group, $SE(3)$, is the semidirect product of $\mathbb{R}^3$
with the 3D special orthogonal group $SO(3)$. That is,
\[
SE(3)=\mathbb{R}^3\rtimes SO(3)=SO(3)\ltimes\mathbb{R}^3.
\] 
We denote elements $g\in SE(3)$ as $g=(\mathbf{t},\mathbf{R})$ where $\mathbf{t}\in\mathbb{R}^3$ and $\mathbf{R}\in SO(3)$. For any $g=(\mathbf{t},\mathbf{R})$ and 
$g'=(\mathbf{t}',\mathbf{R}')\in SE(3)$ the group law is written as 
\[
g\circ g'=(\mathbf{t}+\mathbf{R}\mathbf{t}',\mathbf{RR}'),
\]
and 
\[
g^{-1}=(-\mathbf{R}^T\mathbf{t},\mathbf{R}^T)=(-\mathbf{R}^{-1}\mathbf{t},\mathbf{R}^{-1}).
\]
The special Euclidean group $SE(3)$ is a unimodular group with the normalized Haar measure given by 
\[
\D g=\dd\mathbf{t}\D\mathbf{R}=\frac{1}{8\pi^2}\D x\D y\D z\sin\beta \D\beta \D\alpha \D\gamma.
\]
The convolution of functions $f_k\in L^1(SE(3))$ with $k\in\{1,2\}$, is given by 
\[
(f_1\star f_2)(h)=\int_{SE(3)}f_1(g)f_2(g^{-1}\circ h)\D g,
\]
for $h\in SE(3)$.

The unified Fourier Plancherel/Parseval formula on the group $SE(3)$ is given by 
\[
\int_{SE(3)}|f(g)|^2\D g=\frac{1}{2\pi^2}\int_0^\infty\|\widehat{f}(p)\|_{\rm HS}^2p^2\D p,
\]
and also the noncommutative Fourier reconstruction formula is given by
\begin{equation}\label{FourierInv.SE3}
f(g)=\frac{1}{2\pi^2}\int_0^\infty\mathrm{tr}\left[\widehat{f}(p)U_p(g)\right]p^2\D p,
\end{equation}
for $f\in L^1\cap L^2(SE(3))$ and $g\in SE(3)$, where for $f\in L^1(SE(3))$ and $p>0$ 
the linear operator $\widehat{f}(p)$ is given by 
\begin{equation}
\widehat{f}(p):=\int_{SE(3)}f(g)U_p(g^{-1})\D g=\int_{SE(3)}f(g)U_p(g)^*\D g,
\end{equation}

Suppose that $\mathbf{\Gamma}$ is a discrete and co-compact  subgroup of $SE(3)$ and $\mathcal{E}(\mathbf{\Gamma}):=(\psi_\ell)_{\ell\in\mathbb{I}}$ is an orthonormal basis of bounded functions for the Hilbert function space $L^2(\mathbf{\Gamma}\backslash SE(3),\mu)$.  
Let $f\in L^1\cap L^2(SE(3))$ such that $\widetilde{f}\in L^2(\mathbf{\Gamma}\backslash SE(3),\mu)$. Then  Theorem \ref{MainRecTh0.L2.K} implies that 
\begin{equation}
\widetilde{f}=\sum_{\ell\in\mathbb{I}}\left(\sum_{\gamma\in\mathbf{\Gamma}}\int_0^\infty\mathrm{tr}\left[\widehat{f}(p)Q_{\gamma\Omega}^\ell(p)\right]p^2\dd p\right)\psi_\ell.
\end{equation}
In addition, if $\widetilde{f}\in\mathcal{A}(\mathcal{E})$ then using Theorem \ref{ABC.Main},  we get 
\[
\widetilde{f}(\mathbf{\Gamma} g)=\sum_{\ell\in\mathbb{I}}\left(\sum_{\gamma\in\mathbf{\Gamma}}\int_0^\infty\mathrm{tr}\left[\widehat{f}(p)Q_{\gamma\Omega}^\ell(p)\right]p^2\dd p\right)\psi_\ell(\mathbf{\Gamma} g),
\]
for $\mu$-a.e. $\mathbf{\Gamma} g\in \mathbf{\Gamma}\backslash SE(3)$.\\

{\bf Acknowledgments.}
This research is supported by University of Delaware Startup grants.
This work was supported in part by NUS Startup grants A-0009059-02-00 and A-
0009059-03-0. The findings and opinions expressed here are
only those of the authors, and not of the funding agencies.

\bibliographystyle{amsplain}

\end{document}